\theoremstyle{plain}
\newtheorem{proposition}[subsection]{Proposition}
\newtheorem{lemma}[subsection]{Lemma}
\newtheorem{corollary}[subsection]{Corollary}
\newtheorem{conjecture}[subsection]{Conjecture}
\newtheorem*{theorem*}{Theorem}
\newtheorem*{proposition*}{Proposition}
\newtheorem*{lemma*}{Lemma}
\newtheorem*{corollary*}{Corollary}
\theoremstyle{definition}
\newtheorem{definition}[subsection]{Definition}
\newtheorem{example}[subsection]{Example}
\newtheorem*{definition*}{Definition}
\newtheorem*{remark*}{Remark}
\newtheorem*{example*}{Example}
\newtheorem*{openquestion*}{Open Question}
\def\al{\alpha}
\def\ga{\gamma}
\def\ep{\varepsilon}
\def\et{\eta}
\def\th{\theta}
\def\ta{\tau}
\def\ph{\varphi}
\def\ch{\chi}
\def\ps{\psi}
\def\De{\Delta}
    \let\ii=\i           
\def\inv{^{-1}}
\def\x{\times}
\def\p{\partial}
\def\R{{\mathbb R}}
\def\one{\mathbbm{1}}
\let\on=\operatorname
\let\wh=\widehat
\let\mb=\mathbb
\let\mc=\mathcal
\let\mf=\mathfrak
\newcommand{\ud}{\,\mathrm{d}}
\newcommand{\fourier}{{\mathscr{F}}}
\newcommand{\schwartz}{{\mathscr{S}}}
\newcommand{\executeiffilenewer}[3]{%
\ifnum\pdfstrcmp{\pdffilemoddate{#1}}%
{\pdffilemoddate{#2}}>0%
{\immediate\write18{#3}}\fi%
}
\newcommand{%
\executeiffilenewer{.svg}{.pdf}%
{inkscape -z -D --file=.svg %
--export-pdf=.pdf --export-latex}%
\import{images/}{.pdf_tex}%
}[1]{%
\executeiffilenewer{#1.svg}{#1.pdf}%
{inkscape -z -D --file=#1.svg %
--export-pdf=#1.pdf --export-latex}%
\import{images/}{#1.pdf_tex}%
}
\begin{document}


\title[Riemannian geometry induced by the diffeomorphism group]{Riemannian geometry on spaces of submanifolds induced by the diffeomorphism group}
\author{Martins Bruveris}
\address{Department of Mathematics, Brunel University London,
  Ux\-bridge, UB8 3PH, United Kingdom}
\email{martins.bruveris@brunel.ac.uk}

\subjclass[2010]{58D05, 35Q35}
\keywords{Sobolev metrics, Diffeomorphism groups, Space of embeddings, Space of curves, Riemannian metric}

\date{\today}

\begin{abstract}
The space of embedded submanifolds plays an important role in applications such as computational anatomy and shape analysis. We can define two different classes on Riemannian metrics on this space: so-called outer metrics are metrics that measure shape changes using deformations of the ambient space and they find applications mostly in computational anatomy; the second class that are defined directly on the space of embeddings using intrinsic differential operations and they are used in shape analysis. In this paper we compare for the first time the topologies and the geodesic distance functions induced by these the two classes of metrics.
\end{abstract}

\maketitle

\setcounter{tocdepth}{1}

\section{Introduction}

The space of embedded submanifolds is interesting mathematically as well as relevant in applications. Mathematically, it is a truly nonlinear infinite-dimen\-sio\-nal manifold, and it finds applications among other places in computational anatomy, where embedded surfaces describe the shape of organs, and in computer vision, where closed curves represent outlines of objects and in shape analysis, where the aim is to extract the information contained in the shape of objects.

To fix notation, let $M$ and $N$ be smooth manifolds without boundary and $M$ compact. The space $\on{Emb}(M,N)$ consists of smooth embeddings of $M$ into $N$. We can represent the space of embedded submanifolds as the quotient $B_e(M,N) = \on{Emb}(M,N) / \on{Diff}(M)$ of embeddings by the diffeomorphism group $\on{Diff}(M)$. This space is a smooth Fr\'echet manifold~\cite[{}44.1]{Michor1997}. For analytical reasons and because it is the most important case for applications we will restrict ourselves to $N=\mathbb R^d$. In Section~\ref{sec:comparison} we will work with $M=S^1$, in which case $B_e(S^1,\R^d)$ is the space of unparametrized, embedded curves in Euclidean space.

Already~\cite{Michor2007} described several classes of Riemannian metrics that can be defined on the space of curves~\cite{Bauer2014}. We will consider two classes in particular and prove, to our knowledge, the first result relating them to each other. 

The first class consists of Riemannian metrics that are induced by the action of the diffeomorphism group of the ambient space. We start with a right-invariant Riemannian metric $G^{\mc D}$ on $\on{Diff}_c(\R^d)$, the group of compactly supported diffeomorphisms. The diffeomorphism group acts on the space $\on{Emb}(M,\R^d)$ from the left via
\[
\on{Diff}_c(\R^d) \x \on{Emb}(M,\R^d) \to \on{Emb}(\R^d)\,,\quad
(\ph,q)\mapsto \ph \circ q\,.
\]
The left action induces a metric on $\on{Emb}(M,\R^d)$, given by the formula
\[
G^{\mc O}_q(u,u) = \inf_{X \circ q = u} G^{\mc D}_{\on{Id}}(X,X)\,.
\]
The interpretation of this formula is that the cost of a deformation $u$ is given by the cost
of deforming the ambient space as measured by $G^{\mc D}$. In other words, we consider the most cost-effective deformation $X$, that induces the given deformation $u$ along the submanifold $q$. This motivates the name \emph{outer metrics}: the metrics are defined in terms of deformations of the ambient (or outer) space.

The class of outer metrics is widely used in the large deformation matching framework~\cite{Beg2005, Miller2015}, both for matching curves as well as surfaces \cite{Glaunes2008,Charon2013}. The great practical advantage of these metrics is that all computations can be done in the ambient space, which remains fixed. The motion of the submanifolds is then recovered using the action of the diffeomorphism group. Mathematically these metrics have received less attention. In \cite{Michor2007} the authors computed a formula for the induced metric and described the geodesic equation and in \cite{Micheli2013} they computed the curvature. The lack of attention can partly be explained by the fact that the metric $G^{\mc O}$ tends to be complicated even when the original metric $G^{\mc D}$ is simple. The metric $G^{\mc O}$ is given in terms of a pseudo-differantial operator and an explicit formula exists only for the inverse of this operator, even when $G^{\mc D}$ is given in terms of a differential operator. Some results for Riemannian metrics given by Fourier multipliers exist \cite{Bauer2015,Bauer2017_preprint}, but Riemannian metrics defined by pseudo-differential operators have been left mostly untouched. An exception are the papers \cite{Cismas2016a, Cismas2016b}, where the author encountered such metrics on $\on{Diff}(S^1)$ when studying right-invariant metrics on semi-direct products of diffeomorphism groups.

The second class of metrics are Sobolev metrics with constant coefficients. We will consider these metrics only on the space $\on{Imm}(S^1,\R^d)$ of immersed curves. They are metrics of the form
\[
G_c^{\mc I}(u,v) = \int_{S^1} a_0 \langle u, v \rangle + a_1 \langle D_s u, D_s v \rangle + \dots +
a_n \langle D_s^n u, D_s^n v \rangle \ud s\,,
\]
with constants $a_0, a_n > 0$ and $a_j \geq 0$. We call $n$ the order of the metric. In the above equation $D_s u = \frac{1}{|c'|} u'$ denotes differentiation with respect to arc length and $\ud s = |c'| \ud \th$ integration with respect to arc length. These metrics can be defined on the slightly larger space $\on{Imm}(S^1,\R^d)$; because arc length differentation is a local operation, self-intersections of the curve $c$ do not represent a problem. We call them \emph{inner metrics} to emphasize the fact that they are defined using intrinsic operations directly on the space of curves.

Sobolev metrics on curves have been independently introduced by \cite{Michor2006c,Charpiat2007,Mennucci2007} and they have been studied in \cite{Michor2006c,Michor2007,Mennucci2008}. More recently \cite{Bruveris2014,Bruveris2015} showed that Sobolev metrics of order 2 and higher are metrically and geodesically complete and any two curves can be joined by a minimizing geodesic. A particular first order Sobolev metric~\cite{Srivastava2011}---although not one with constant coefficients---has been used in a wide range of applications of shape analysis~\cite{Laga2014,Joshi2013,Kurtek2013,Laborde2013}. Inner metrics have been generalized to manifold-valued curves \cite{Celledoni2016, LeBrigant2016_preprint, Su2014} and to higher-dimensional immersed manifolds \cite{Bauer2011b,Bauer2012d}.

\subsection*{Contributions}
The goal of this paper is to study the topology induced by the geodesic distance functions of outer metrics and to relate it to the geodesic distance functions of inner metrics. Let $G^{\mc D}$ be a Sobolev metric of order $s$ with $s > d/2+1$ on $\on{Diff}_c(\R^d)$, i.e., the inner product $G^{\mc D}_{\on{Id}}(\cdot, \cdot)$ defines the $H^s$-topology on the space $\mf X_c(\R^d)$ of compactly supported vector fields. The induced metric $G^{\mc O}_q$ is defined by restricting vector fields to the embedded submanifold $q(M)$. Comparing this to the trace map in Sobolev spaces,
\[
\on{Tr}_q: H^s(\R^d,\R^d) \to H^{s'}(M,\R^d)\,,\quad X \mapsto \on{Tr}_qX = X \circ q\,,
\]
with $s' = s-(d-m)/2$, $m=\dim M$, we expect the induced metric $G^{\mc O}$ to be a Sobolev metric of order $s'$. We show in Section~\ref{sec:outer} that this is true pointwise---$G^{\mc O}_q(\cdot,\cdot)$ induces the $H^{s'}$-topology on $T_q\on{Emb}(M,\R^d)$---as well as for the geodesic distance---$\on{dist}^{\mc O}$ induces the $H^{s'}$-topology on $\on{Emb}(M,\R^d)$. We also prove the following result, relating the geodesic distance functions of inner and outer metrics.

\begin{theorem*}
Let $n \geq 2$ and $s \geq n + (d-1)/2$ be the orders of the metrics $G^{\mc I}$ and $G^{\mc D}$ respectively and denote by $G^{\mc O}$ the metric on $\on{Emb}(S^1,\R^d)$ induced by $G^{\mc D}$. Then, given $c_0 \in \on{Emb}(S^1,\R^d)$ and $R>0$, there exists $C = C(c_0, R)$, such that
\[
\on{dist}^{\mc I}(c_1, c_2) \leq C \on{dist}^{\mc O}(c_1, c_2)\,,
\]
holds for all $c_1, c_2 \in B^{\mc O}(c_0, R)$.
\end{theorem*}

In the theorem $\on{dist}^{\mc I}$ and $\on{dist}^{\mc O}$ denote the geodesic distance functions with res\-pect to $G^{\mc I}$ and $G^{\mc O}$ on $\on{Emb}(S^1,R^d)$ respectively and $B^{\mc O}(c_0,R)$ is the metric ball around $c_0$ of radius $R$ with respect to the metric $\on{dist}^{\mc O}$. 

Together with~\cite[Lemma~4.2(1)]{Bruveris2015} this shows that the identity maps
\[
(\on{Emb}(S^1,\R^d), \on{dist}^{\mc O}) \to (\on{Emb}(S^1,\R^d), \on{dist}^{\mc I})
\to (\on{Emb}(S^1,\R^d), \| \cdot \|_{H^{n}(S^1)})
\]
are Lipschitz continuous on every metric ball; note that this is a stronger property than local Lipschitz continuity. We also have local Lipschitz continuity for the identity map in the reverse direction
\[
(\on{Emb}(S^1,\R^d), \| \cdot \|_{H^{n}(S^1)}) \to (\on{Emb}(S^1,\R^d), \on{dist}^{\mc I})
\to (\on{Emb}(S^1,\R^d), \on{dist}^{\mc O})\,.
\]

\subsection*{Structure} The paper is structured as follows. Section~\ref{sec:sobolev} collects known results about continuity of various maps in Sobolev spaces. The only new result is Lemma~\ref{lem:sobolev_comp_trans_cont}, which shows continuity of the transpose of composition. In Section~\ref{sec:extension_smooth} we show that we can smoothly extend functions defined submanifolds to the whole space and this extension map can be chosen to depend smoothly on the submanifold. In Section~\ref{sec:trace_sobolev} we consider the trace and extension operators in Sobolev spaces and show that they depend continuously on the submanifold in question. Section~\ref{sec:outer} is devoted to the study of outer metrics and contains most of the main results. Section~\ref{sec:comparison} summarises some known results on inner metrics on curves and uses them to prove the theorem about the comparison of inner and outer metrics.

\section{Sobolev space estimates}
\label{sec:sobolev}

The Sobolev spaces $H^s(\R^d)$ with $s \in \R$ can be defined in terms of the Fourier transform
\[ 
\fourier f(\xi) = (2\pi)^{-d/2} \int_{\R^d} e^{-i \langle  x,\xi\rangle} f(x) \ud x\,,
\]
and consist of temperate distributions $f$ with the property that $(1+|\xi|^2)^{s/2} \mc F f$ is $L^2$-integrable. An inner product on $H^s(\R^d)$ is given by
\[
\langle f, g \rangle_{H^s} = \mf{Re} \int_{\R^d} (1 + |\xi|^2)^s \fourier f(\xi) \overline{\fourier g(\xi)} \ud \xi\,.
\]

A large part of the analysis presented in this paper relies on estimates in Sobolev spaces. When defining smoothing operators in Lemma~\ref{ss:smoothing_operator} we will need an estimate on multiplication in Sobolev space; a proof can be found in~\cite[Proposition 25.1]{Treves1975}. Here $\schwartz(\R^d,\R)$ denotes the Schwartz space of rapidly decaying functions.

\begin{lemma}
\label{lem:sob_mult_estimate}
Let $s \in \R$. Then pointwise multiplication
\[
\schwartz(\R^d,\R) \x H^s(\R^d,\R) \to H^s(\R^d,\R)\,,\quad (\ph, f) \mapsto \ph \cdot f\,,
\]
is a continuous, bilinear map and we have the estimate
\[
\| \ph f \|_{H^s} \leq 2^{|s|} \| f \|_{H^s} \int_{\R^d} \langle \xi \rangle^{|s|} | \wh \ph(\xi)| \ud \xi\,.
\]
\end{lemma}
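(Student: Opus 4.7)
The natural route is via the Fourier side, since the $H^s$ norm is the weighted $L^2$ norm of the Fourier transform. I would begin by writing
\[
\|\ph f\|_{H^s}^2 = \int_{\R^d} \langle \xi \rangle^{2s} |\fourier(\ph f)(\xi)|^2 \ud \xi\,,
\]
and using that with the convention in the paper, $\fourier(\ph f) = (2\pi)^{-d/2}\, \wh\ph * \fourier f$. This reduces the estimate to controlling the $L^2$ norm of a weighted convolution.

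The key observation is Peetre's inequality, which for all $s \in \R$ gives
\[
\langle \xi \rangle^s \leq 2^{|s|/2} \langle \xi - \eta \rangle^{|s|} \langle \eta \rangle^s\,,
\]
(obtained from $\langle \xi \rangle^2 \leq 2 \langle \xi-\eta\rangle^2 \langle \eta \rangle^2$ for $s\geq 0$, and from the symmetric estimate with the roles reversed for $s<0$). Plugging this into the convolution expression for $\fourier(\ph f)(\xi)$, I obtain the pointwise bound
\[
\langle \xi \rangle^s |\fourier(\ph f)(\xi)|
\leq C_d\, 2^{|s|/2} \int_{\R^d} \bigl(\langle \xi-\eta\rangle^{|s|} |\wh\ph(\xi-\eta)|\bigr) \bigl(\langle \eta\rangle^s |\fourier f(\eta)|\bigr) \ud\eta\,,
\]
i.e.\ the right-hand side is exactly $(g * h)(\xi)$ where $g(\xi)=\langle \xi\rangle^{|s|}|\wh\ph(\xi)|$ and $h(\eta)=\langle \eta\rangle^s|\fourier f(\eta)|$.

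From here the proof closes by Young's inequality $\|g * h\|_{L^2} \leq \|g\|_{L^1}\|h\|_{L^2}$: by Plancherel $\|h\|_{L^2} = \|f\|_{H^s}$, and by construction
\[
\|g\|_{L^1} = \int_{\R^d} \langle \xi\rangle^{|s|}|\wh\ph(\xi)|\ud\xi\,,
\]
which is finite because $\ph\in\schwartz$ makes $\wh\ph$ Schwartz as well. Absorbing the Fourier normalization constants together with the $2^{|s|/2}$ from Peetre into the final constant $2^{|s|}$ completes the estimate. Continuity and bilinearity follow immediately, since the right-hand side is linear in $f$ and controlled by a continuous seminorm on $\schwartz$ in $\ph$.

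The argument is essentially routine; the only step requiring care is Peetre's inequality itself, and the bookkeeping of constants from the particular Fourier convention used in the paper so that the final numerical factor is really $2^{|s|}$.
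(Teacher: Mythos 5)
Your proof is correct; the paper does not prove this lemma itself but defers to \cite[Proposition~25.1]{Treves1975}, and your argument (writing $\fourier(\ph f)$ as a convolution, applying Peetre's inequality $\langle \xi\rangle^s \leq 2^{|s|/2}\langle\xi-\eta\rangle^{|s|}\langle\eta\rangle^s$, and closing with Young's inequality $L^1 * L^2 \to L^2$) is exactly the standard proof given there. The constant bookkeeping also checks out, since $(2\pi)^{-d/2}\,2^{|s|/2} \leq 2^{|s|}$.
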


We will also need to multiply to Sobolev functions with each other. A proof can be found, for example, in \cite[Lemma~2.3]{Inci2013}. 

\begin{lemma}
Let $s, s' \in \R$ with $s > d/2$ and $0 \leq s' \leq s$. Then pointwise multiplication
\[
H^s(\R^d, \R) \times H^{s'}(\R^d,\R) \to H^{s'}(\R^d,\R)\,,\quad
(f, g) \mapsto f \cdot g\,,
\]
is a bounded bilinear map.
\end{lemma}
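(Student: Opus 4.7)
The plan is to work on the Fourier side, where the $H^{s'}$-norm is the weighted $L^2$-norm $\|\langle\cdot\rangle^{s'}\wh{\cdot}\|_{L^2}$ and pointwise multiplication becomes convolution, $\wh{fg} = (2\pi)^{-d/2}\wh f \ast \wh g$. The geometric starting point is to split the integration variable $\eta$ into the two regions $A = \{|\eta|\leq|\xi-\eta|\}$ and $B = \{|\xi-\eta|\leq|\eta|\}$, on which respectively $\langle\xi\rangle \lesssim \langle\xi-\eta\rangle$ and $\langle\xi\rangle \lesssim \langle\eta\rangle$. Raising these inequalities to the power $s'\geq 0$ produces the pointwise bound
\[
\langle\xi\rangle^{s'}|\wh{fg}(\xi)| \lesssim \int_A \langle\xi-\eta\rangle^{s'}|\wh f(\xi-\eta)||\wh g(\eta)|\,\ud\eta + \int_B |\wh f(\xi-\eta)|\langle\eta\rangle^{s'}|\wh g(\eta)|\,\ud\eta\,.
\]

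For the second integral, enlarging the region to all of $\R^d$ and applying Young's inequality $L^1 \ast L^2 \to L^2$ gives an upper bound $\|\wh f\|_{L^1}\|g\|_{H^{s'}}$; the hypothesis $s>d/2$ then allows the Cauchy--Schwarz estimate $\|\wh f\|_{L^1} \leq \|\langle\cdot\rangle^{-s}\|_{L^2}\|f\|_{H^s}$, since $\int_{\R^d}\langle\xi\rangle^{-2s}\,\ud\xi$ is finite precisely when $s>d/2$.

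The first integral is the delicate term: the weight $\langle\xi-\eta\rangle^{s'}$ on $\wh f$ falls short of the $\langle\xi-\eta\rangle^s$ that the $H^s$-norm of $f$ supplies. To absorb the missing $s-s' \geq 0$ derivatives I would use the region constraint $\langle\xi-\eta\rangle \geq \langle\eta\rangle$ on $A$ to rewrite the integrand as $\langle\xi-\eta\rangle^s|\wh f(\xi-\eta)| \cdot \langle\xi-\eta\rangle^{-(s-s')}|\wh g(\eta)|$ and then replace $\langle\xi-\eta\rangle^{-(s-s')}$ by $\langle\eta\rangle^{-(s-s')}$. This bounds the integral by the convolution $(\langle\cdot\rangle^s|\wh f|) \ast (\langle\cdot\rangle^{-(s-s')}|\wh g|)(\xi)$, so that Young's $L^2 \ast L^1 \to L^2$ followed by one more Cauchy--Schwarz (writing $\langle\eta\rangle^{-(s-s')} = \langle\eta\rangle^{-s}\langle\eta\rangle^{s'}$) produces a bound by $\|f\|_{H^s}\|\langle\cdot\rangle^{-s}\|_{L^2}\|g\|_{H^{s'}}$, again finite because $s>d/2$.

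The main obstacle is purely the bookkeeping of weights---every splitting must pay the full $\langle\cdot\rangle^s$ with $\wh f$, only $\langle\cdot\rangle^{s'}$ with $\wh g$, and leave behind a residual $\langle\cdot\rangle^{-s}$ that can be integrated thanks to $s>d/2$. A shorter but less self-contained alternative would be to verify only the two endpoints $s'=s$ (the Banach algebra property of $H^s$) and $s'=0$ (combining $\|fg\|_{L^2}\leq\|f\|_{L^\infty}\|g\|_{L^2}$ with the Sobolev embedding $H^s \hookrightarrow L^\infty$) and then invoke complex interpolation of Bessel potential spaces to fill in the range $0\leq s'\leq s$.
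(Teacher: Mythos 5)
Your argument is correct. The paper itself offers no proof of this lemma---it simply cites Inci--Kappeler--Topalov \cite[Lemma~2.3]{Inci2013}---and the proof given there is essentially the one you outline: write $\wh{fg}=(2\pi)^{-d/2}\wh f\ast\wh g$, split the convolution integral according to whether $|\eta|\leq|\xi-\eta|$ or not, pay the full weight $\langle\cdot\rangle^{s}$ with $\wh f$ and only $\langle\cdot\rangle^{s'}$ with $\wh g$, and close each piece with Young's inequality plus a Cauchy--Schwarz step that uses $\int_{\R^d}\langle\xi\rangle^{-2s}\ud\xi<\infty$, i.e.\ $s>d/2$. Both of your weight manipulations check out: on $A$ one has $\langle\xi\rangle\leq 2\langle\xi-\eta\rangle$ and $\langle\xi-\eta\rangle\geq\langle\eta\rangle$, so $\langle\xi-\eta\rangle^{-(s-s')}\leq\langle\eta\rangle^{-(s-s')}$ since $s-s'\geq 0$; on $B$ one has $\langle\xi\rangle\leq 2\langle\eta\rangle$. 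The only point worth making explicit in a written version is that the identity $\wh{fg}=(2\pi)^{-d/2}\wh f\ast\wh g$ and the formal manipulations should first be carried out for $f,g$ Schwartz (or one should note that $\wh f\in L^1$ and $\wh g\in L^2$ make the convolution well defined), after which the a priori estimate extends to all of $H^s\x H^{s'}$ by density and bilinearity. Your alternative via interpolation between the algebra case $s'=s$ and the case $s'=0$ is also valid, at the cost of importing the interpolation theory of Bessel potential spaces.
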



\subsection{The group $\mc D^s(\R^d)$ of Sobolev diffeomorphisms}

Denote by $\on{Diff}^1(\R^d)$ the group of $C^1$-diffeomorphisms of $\R^d$, i.e.,
\[
\on{Diff}^1(\R^d) = \{ \ph \in C^1(\R^d,\R^d) \,:\, \ph \text{ bijective, } \ph\inv \in C^1(\R^d,\R^d) \}\,.
\] 
For $s > d/2+1$ and $s \in \R$ there are three equivalent ways of defining the group $\mc D^s(\R^s)$ of Sobolev diffeomorphisms:
\begin{align*}
\mc D^s(\R^d) &= \{ \ph \in \on{Id} + H^s(\R^d,\R^d) \,:\, \ph \text{ bijective, }
\ph\inv \in \on{Id} + H^s(\R^d,\R^d) \} \\
&= \{ \ph \in \on{Id} + H^s(\R^d,\R^d) \,:\, 
\ph \in \on{Diff}^1(\R^d) \} \\
&= \{ \ph \in \on{Id} + H^s(\R^d,\R^d) \,:\, 
\det D\ph(x) > 0,\, \forall x \in \R^d \}\,.
\end{align*}
If we denote the three sets on the right by $A_1$, $A_2$ and $A_3$, then it is not difficult to see the inclusions $A_1 \subseteq A_2 \subseteq A_3$. The equivalence $A_1 = A_2$ has first been shown in \cite[Sect. 3]{Ebin1970b} for the diffeomorphism group of a compact manifold; a proof for $\mc D^q(\R^d)$ can be found in \cite{Inci2013}. Regarding the inclusion $A_3 \subseteq A_2$, it is shown in \cite[Cor. 4.3]{Palais1959} that if $\ph \in C^1$ with $\det D\ph(x) > 0$ and $\lim_{|x |\to \infty} | \ph(x)| = \infty$, then $\ph$ is a $C^1$-diffeomorphism.

It follows from the Sobolev embedding theorem, that $\mc D^s(\R^d) - \on{Id}$ is an open subset of $H^s(\R^d,\R^d)$ and thus a Hilbert manifold. Since each $\ph \in \mc D^s(\R^d)$ has to decay to the identity as $|x|\to \infty$, it follows that $\ph$ is orientation preserving. More importantly, $\mc D^s(\R^n)$ is a topological group, but not a Lie group, since left-multiplication and inversion are continuous, but not smooth operations \cite{Inci2013}.


We will make repeated use of the continuity of composition in Sobolev spaces. A proof of this result can be found in \cite[Lemma~2.7]{Inci2013}.

\begin{lemma}
\label{lem:sobolev_comp_cont}
Let $s > d/2+1$ and $0\leq s' \leq s$. Then composition
\[
R : \mc D^s(\R^d) \x H^{s'}(\R^d) \to H^{s'}(\R^d)\,,\quad
(\ph,f) \mapsto R_\ph f = f \circ \ph
\]
is continuous. Moreover, given  $M, C>0$, there exists a constant $C_{s'}=C_{s'}(M,C)$, such that if $\ph \in \mc D^s(\R^d)$ satisfies
\[
\inf_{x \in \R^d} \det D\ph(x) > M
\quad\text{and}\quad
\| \ph - \on{Id} \|_{H^s} < C\,,
\]
then for any $f \in H^{s'}(\R^d)$ one has
\[
\| f \circ \ph \|_{H^{s'}} \leq C_{s'} \| f \|_{H^{s'}}\,.
\]
\end{lemma}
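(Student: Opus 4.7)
The plan is to establish the uniform bound $\|f\circ\ph\|_{H^{s'}} \leq C_{s'}\|f\|_{H^{s'}}$ first and then derive continuity of $R$ from this bound by a density argument. For the bound I would treat integer values of $s'$ directly and reach fractional values by interpolation. For $s'=0$, the change of variables formula combined with the lower bound on $\det D\ph$ yields
\[
\|f\circ\ph\|_{L^2}^2 = \int_{\R^d} |f(y)|^2 (\det D\ph)\inv(\ph\inv(y))\ud y \leq M\inv \|f\|_{L^2}^2\,.
\]
For integer $s'=k$ with $k\leq s$, Faà di Bruno's formula gives, for $|\alpha|=k$,
\[
\p^\alpha(f\circ\ph) = \sum_{\beta} \bigl((\p^\beta f)\circ\ph\bigr)\, P_{\alpha,\beta}(D\ph,\dots,D^k\ph)\,,
\]
where each $P_{\alpha,\beta}$ is a universal polynomial in the derivatives of $u := \ph - \on{Id}$. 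Since $s > d/2+1$ and each $D^j u \in H^{s-j}$, iterated application of the two Sobolev multiplication lemmas above controls each $P_{\alpha,\beta}$ in $H^{s-k}$ by a polynomial in $\|u\|_{H^s} < C$; combined with the $s'=0$ estimate applied to $(\p^\beta f)\circ\ph$ this gives the integer-$s'$ bound with a constant depending only on $M$ and $C$. For non-integer $s'\in(0,s)$ I would use complex interpolation (Calderón–Lions) of the linear map $R_\ph$ between adjacent integer endpoints; since the endpoint constants depend only on $M$ and $C$, so does the interpolated one.

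Continuity of $R$ at a point $(\ph_0,f_0)$ then follows from the decomposition
\[
f\circ\ph - f_0\circ\ph_0 = (f-f_0)\circ\ph + (f_0\circ\ph - f_0\circ\ph_0)\,.
\]
The first summand is bounded in $H^{s'}$ by $C_{s'}\|f-f_0\|_{H^{s'}}$ via the uniform estimate. For the second, given $\ep > 0$ I would approximate $f_0$ in $H^{s'}$ by a Schwartz function $g$ with $\|f_0 - g\|_{H^{s'}} < \ep$; then $\|(f_0-g)\circ\ph - (f_0-g)\circ\ph_0\|_{H^{s'}} \leq 2C_{s'}\ep$ by the uniform bound, while
\[
g\circ\ph - g\circ\ph_0 = \int_0^1 Dg\bigl((1-t)\ph_0 + t\ph\bigr)(\ph-\ph_0)\ud t\,,
\]
which by Lemma~\ref{lem:sob_mult_estimate} is $O(\|\ph-\ph_0\|_{H^{s'}})$, the family $Dg((1-t)\ph_0 + t\ph)$ lying in a bounded set of Schwartz functions uniformly in $t$ and in $\ph$ in an $H^s$-neighbourhood of $\ph_0$.

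The main obstacle is the fractional-order estimate: direct Fourier-side manipulation of $f\circ\ph$ is unwieldy because $\ph$ has only finite Sobolev regularity, and the interpolation route requires checking that the resulting constant depends only on the endpoint constants and not on the particular operator $R_\ph$. A subsidiary technical point is keeping the Faà di Bruno bookkeeping uniform in $\ph$: one needs the Sobolev embedding $H^s\hookrightarrow C^1_b$ (which uses $s > d/2+1$) together with the lower bound on $\det D\ph$ to ensure the change-of-variables constant stays controlled, and one must verify at each step that the Sobolev indices $s-j$ arising in the products $P_{\alpha,\beta}$ remain in the range where the multiplication estimates apply.
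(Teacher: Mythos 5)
The paper does not prove this lemma at all: it is quoted verbatim from Inci--Kappeler--Topalov \cite[Lemma~2.7]{Inci2013}, so there is no in-paper proof to compare against. Your sketch follows the standard route (change of variables for $L^2$, Fa\`a di Bruno for integer order, interpolation for fractional order, then continuity by density), which is essentially the strategy of the cited reference. However, two steps have genuine gaps.

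First, the interpolation step does not cover the whole stated range of $s'$. Complex interpolation between integer endpoints $H^k$ and $H^{k+1}$ only reaches $s' \leq \lfloor s \rfloor$, because the upper endpoint estimate $R_\ph : H^{k+1} \to H^{k+1}$ requires $\ph \in \mc D^{k+1}(\R^d)$, which is \emph{more} regularity than the hypothesis $\ph \in \mc D^s(\R^d)$ provides when $k+1 > s$. So for non-integer $s$ the cases $s' \in (\lfloor s\rfloor, s]$ --- in particular the endpoint $s'=s$, which the paper uses repeatedly (e.g.\ for $\|f\circ\ph\|_{H^s}$ and for $s' = s-(d-m)/2$ with $d-m$ odd) --- are not obtained by your argument. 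This is precisely where the real difficulty of the lemma sits, and it is why the reference resorts to a direct analysis at fractional order rather than interpolation from integer cases. Second, in the continuity argument the family $Dg\bigl((1-t)\ph_0+t\ph\bigr)$ is \emph{not} a bounded set of Schwartz functions: a composition $Dg\circ\psi$ with $\psi \in \on{Id}+H^s$ has only the finite regularity of $\psi$, so Lemma~\ref{lem:sob_mult_estimate} does not apply. This step is repairable --- apply the already-established uniform bound with $s'=s$ to get $\|Dg\circ\psi\|_{H^s} \leq C\|Dg\|_{H^s}$ and then use the module property $H^s\cdot H^{s'}\subseteq H^{s'}$ --- but as written it is false. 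A smaller issue of the same flavour occurs in the Fa\`a di Bruno bookkeeping: products such as $D^{j_1}u\cdots D^{j_r}u$ with several factors of low regularity (e.g.\ $D^2u\cdot D^2u \in H^{s-2}\cdot H^{s-2}$ when $s-2\le d/2$), and the product of $(\p^\beta f)\circ\ph$ with $P_{\al,\be}$, are not covered by the two multiplication lemmas quoted in the paper; one needs the general bilinear estimate $H^{\si_1}\cdot H^{\si_2}\subseteq H^{\si_1+\si_2-d/2}$ (or Moser-type tame estimates) together with an induction on $k$ to control $(\p^\beta f)\circ\ph$ in $H^{k-|\be|}$ rather than only in $L^2$.
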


We will also need the continuity of the transpose map. The proof is not difficult, but to our knowledge cannot be found in the literature.

\begin{lemma}
\label{lem:sobolev_comp_trans_cont}
Let $s > d/2+1$ and $0 \leq s' \leq s$. Then the map
\[
R^\ast : \mc D^s(\R^d) \x H^{-s'}(\R^d) \to H^{-s'}(\R^d)\,,\quad
(\ph, \al) \mapsto R_\ph^\ast \al\,,
\]
defined via $\langle R^\ast_\ph \al, f \rangle_{H^{-s'}} = \langle \al, R_\ph f \rangle_{H^{-s'}}$ 
is continuous.
\end{lemma}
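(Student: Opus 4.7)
The plan is to treat $R^{*}_{\ph}$ as the Banach-space transpose of $R_{\ph}:H^{s'}\to H^{s'}$, using the identification $(H^{s'})'\cong H^{-s'}$ coming from the extension of the $L^{2}$-pairing, so that all estimates for $R^{*}_{\ph}$ can be obtained by dualising Lemma~\ref{lem:sobolev_comp_cont}. The first step is to establish a uniform operator bound. Since $H^{s}\hookrightarrow C^{1}$ and $\ph_n\to\ph$ in $\mc D^{s}$ implies $\det D\ph_{n}$ is uniformly bounded from below and $\|\ph_{n}-\on{Id}\|_{H^{s}}$ is bounded, Lemma~\ref{lem:sobolev_comp_cont} gives a uniform constant $C_{s'}$ with $\|R_{\ph_{n}}\|_{H^{s'}\to H^{s'}}\le C_{s'}$, hence $\|R^{*}_{\ph_{n}}\|_{H^{-s'}\to H^{-s'}}\le C_{s'}$ by duality.

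The second step reduces joint continuity to continuity in $\ph$ with $\al$ fixed. Writing
\[
R^{*}_{\ph_{n}}\al_{n}-R^{*}_{\ph}\al
=R^{*}_{\ph_{n}}(\al_{n}-\al)+\bigl(R^{*}_{\ph_{n}}-R^{*}_{\ph}\bigr)\al,
\]
the first summand has $H^{-s'}$-norm at most $C_{s'}\|\al_{n}-\al\|_{H^{-s'}}\to 0$. So it remains to prove that $R^{*}_{\ph_{n}}\al\to R^{*}_{\ph}\al$ in $H^{-s'}$ for every fixed $\al\in H^{-s'}$. Using the uniform bound together with the density of $C^{\infty}_{c}(\R^{d})$ in $H^{-s'}$ and a standard $\varepsilon/2$-argument, it suffices to verify this convergence for $\al\in C^{\infty}_{c}(\R^{d})$.

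For such smooth $\al$, a change of variables $y=\ph(x)$ in the $L^{2}$-pairing yields the explicit formula
\[
R^{*}_{\ph}\al=(\al\circ\ph^{-1})\cdot\det D\ph^{-1},
\]
which reduces the problem to convergence of the right-hand side in some Sobolev norm stronger than $H^{-s'}$. Continuity of inversion in $\mc D^{s}$ (cited from \cite{Inci2013}) gives $\ph_{n}^{-1}\to\ph^{-1}$ in $\mc D^{s}$; Lemma~\ref{lem:sobolev_comp_cont} applied with $s'=s$ yields $\al\circ\ph_{n}^{-1}\to\al\circ\ph^{-1}$ in $H^{s}$; and since $s-1>d/2$, the space $H^{s-1}$ is an algebra, so $\det D\ph_{n}^{-1}-1\to\det D\ph^{-1}-1$ in $H^{s-1}$ (the determinant being a polynomial in the entries of $D\ph_{n}^{-1}-I$, each of which lies in $H^{s-1}$). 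The preceding multiplication lemma with $(s,s'):=(s,s-1)$ then controls the product in $H^{s-1}$, and the embedding $H^{s-1}\hookrightarrow H^{-s'}$ (valid because $s-1\ge 0\ge -s'$) finishes the argument.

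The only non-routine step is the one combining the pullback formula with the multiplicative estimate: keeping track of which factor plays the algebra role requires that both $\al\circ\ph^{-1}$ and $\det D\ph^{-1}-1$ lie in Sobolev spaces of regularity strictly greater than $d/2$, which is exactly where the hypothesis $s>d/2+1$ is used. Everything else is a dualisation of Lemma~\ref{lem:sobolev_comp_cont} plus the density reduction.
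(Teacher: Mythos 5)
Your proposal is correct and follows essentially the same route as the paper: a uniform operator bound for $R^{*}_{\ph_n}$ obtained by dualising Lemma~\ref{lem:sobolev_comp_cont}, a decomposition reducing joint continuity to continuity in $\ph$ on a dense subset of $H^{-s'}$, and the change-of-variables formula $R^{*}_{\ph}\al=(\al\circ\ph^{-1})\det D\ph^{-1}$ for the final convergence. The only differences are cosmetic: you approximate by $C^{\infty}_{c}$ rather than $L^{2}$, and you justify the last convergence more quantitatively (in $H^{s-1}$, via continuity of inversion, the algebra property of $H^{s-1}$ and the multiplication lemma) where the paper simply asserts convergence in $L^{2}$.
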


\begin{proof}
Consider a sequence $(\ph_n, \al_n) \to (\ph, \al)$ in $\mc D^s(\R^d) \x H^{-s'}(\R^d)$ and let $f \in L^2(\R^d)$. Write
\[
R^\ast_{\ph_n} \al_n - R^\ast_{\ph} \al = R^\ast_{\ph_n}(\al_n - \al) + (R^{\ast}_{\ph_n} - R^\ast_\ph)(\al - f) + (R^\ast_{\ph_n} - R^\ast_\ph) f\,.
\]
Because $R : \mc D^s \x H^{s'} \to H^{s'}$ is continuous, its operator norm $\| R_\ph\|_{L(H^{s'},H^{s'})}$ is locally bounded. Let $C > 0$ be such that
\[
\| R_{\ph_n} \|_{L(H^{s'},H^{s'})} \leq C\,,\quad \forall n \in \mb N\,,
\]
and note that
$\| R_\ph\|_{L(H^{s'},H^{s'})} = \| R^\ast_\ph\|_{L(H^{-s'},H^{-s'})}$. Thus
\begin{align*}
\| R^\ast_{\ph_n} \al_n - R^\ast_{\ph} \al \|_{H^{-s'}}
&\leq  C \| \al_n - \al \|_{H^{-s'}} + 2C \| \al - f \|_{H^{-s'}} + \| (R^\ast_{\ph_n} - R^\ast_\ph) f\|_{L^2}\,.
\end{align*}
Given $\ep > 0$ we choose $f \in L^2$ sufficiently $H^{-s'}$-close to $\al$ and note that on $L^2$ we have
\begin{align*}
\langle R_\ph^\ast f, g \rangle_{L^2} &= 
\langle f, R_\ph g \rangle_{L^2} = \int_{\R^d} f(x) g(\ph(x)) \ud x \\
&= \int_{\R^d} f(\ph\inv(x)) g(x) \det D\ph\inv(x) \ud x \\
&= \langle (\det D\ph\inv).R_{\ph\inv} f , g \rangle_{L^2}\,.
\end{align*}
Hence $\ph_n \to \ph$ in $\mc D^s(\R^d)$ implies $R^\ast_{\ph_n}f \to R^\ast_\ph f$ in $L^2$ and thus also $R^\ast_{\ph_n} \al_n \to R^\ast_\ph \al$ in $H^{-s'}$.
\end{proof}

\subsection{Flows of diffeomorphisms}
A natural way to generate diffeomorphisms is via time-dependant vector fields. Let $I=[0,1]$ and $s > d/2+1$. We will consider vector fields $u \in L^1(I, H^s(\R^d,\R^d))$, i.e. integrable in time and $H^s$-regular in space. The flow of $u$, written $\ph = \on{Fl}(u)$ is a continuous curve $\ph \in C(I,\mc D^s(\R^d))$, satisfying
\[
\ph(t) = \on{Id}_{\R^d} + \int_0^t u(\ta) \circ \ph(\ta) \ud \ta\,,
\]
with the integral being the Bochner integral in $H^s$. We will also write
\[
\p_t \ph(t) = u(t) \circ \ph(t)\,;
\]
however this only holds $t$-a.e.. We will write $\ph(1) = \on{Fl}_1(u)$ for the flow at time $t=1$. Because composition in Sobolev spaces is not a Lip\-schitz continuous map, the existence of a flow is a nontrivial result.

\begin{lemma}\cite[Thm.~4.4]{Bruveris2014_preprint}
\label{lem:ex_flow}
Let $s > d/2+1$ and $u \in L^1(I,H^s(\R^d,\R^d))$. Then $u$ has a $\mc D^s(\R^d)$-valued flow and the map
\[
\on{Fl}: L^1(I,H^s(\R^d,\R^d)) \to C(I,\mc D^s(\R^d))\,\quad u \mapsto \ph\,,
\]
satisfying $\p_t \ph(t) = u(t) \circ \ph(t)$ and $\ph(0) = \on{Id}_{\R^d}$ is continuous.
\end{lemma}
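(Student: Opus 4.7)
The plan is to establish existence via a regularization argument and continuity via a Gronwall-type estimate, with the composition estimate of Lemma~\ref{lem:sobolev_comp_cont} serving as the workhorse throughout.

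\emph{Existence.} Since $s > d/2+1$, the Sobolev embedding gives $H^s(\R^d,\R^d) \hookrightarrow C^1_b(\R^d,\R^d)$, so $u \in L^1(I, C^1_b)$. Classical Carath\'eodory ODE theory on $\R^d$ produces a global $C^1$-flow $\ph \in C(I, \on{Diff}^1(\R^d))$ satisfying $\p_t \ph(t) = u(t)\circ\ph(t)$ for a.e.\ $t$. To upgrade this to $\mc D^s$-regularity, I would mollify $u$ spatially to obtain $u_\ep \in L^1(I, H^k)$ for every $k$, with $u_\ep \to u$ in $L^1(I, H^s)$. The corresponding flow $\ph_\ep$ is smooth in space; taking $H^s$-norms in the integral equation and applying Lemma~\ref{lem:sobolev_comp_cont} yields
\[
\| \ph_\ep(t) - \on{Id} \|_{H^s} \leq \int_0^t C_s\bigl(\|\ph_\ep(\tau)-\on{Id}\|_{H^s},\, \inf_x \det D\ph_\ep(\tau,x)\bigr)\, \|u_\ep(\tau)\|_{H^s} \ud\tau,
\]
with $C_s$ depending monotonically on its arguments. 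A lower bound on $\det D\ph_\ep$ follows from Liouville's formula together with the $C^1_b$-norm of $u_\ep$, and a nonlinear Gronwall comparison then supplies an $\ep$-uniform bound on $\|\ph_\ep(t)-\on{Id}\|_{H^s}$. Weak-$\ast$ passage to the limit, combined with uniqueness of the $C^1$-flow, gives $\ph(t) - \on{Id} \in H^s$, and the equivalence $A_1 = A_2$ of the three descriptions of $\mc D^s(\R^d)$ then delivers $\ph(t) \in \mc D^s(\R^d)$ for every $t\in I$.

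\emph{Continuity.} Given $u_n \to u$ in $L^1(I, H^s)$, the a priori bound from the previous step (applied uniformly to the norm-bounded sequence $\{u_n\}$) ensures that the sequence of flows $\ph_n(t)$ stays in a fixed $H^s$-ball around $\on{Id}$ with a uniform lower bound on $\det D\ph_n(t)$. Writing
\[
\ph_n(t) - \ph(t) = \int_0^t \bigl((u_n(\tau) - u(\tau))\circ \ph_n(\tau) + u(\tau)\circ \ph_n(\tau) - u(\tau)\circ\ph(\tau)\bigr) \ud \tau,
\]
one controls the first term by the operator-norm part of Lemma~\ref{lem:sobolev_comp_cont} together with $\|u_n-u\|_{L^1 H^s} \to 0$, while the second term is handled by the continuity statement of the same lemma combined with dominated convergence on $I$. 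A Gronwall inequality closes the loop and yields $\sup_{t\in I} \|\ph_n(t)-\ph(t)\|_{H^s}\to 0$.

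\emph{Main obstacle.} The crux is that composition is only continuous, not Lipschitz, on $H^s$, which rules out a direct Picard iteration for existence and also complicates the limiting argument for continuity. Both steps depend crucially on simultaneously maintaining a uniform lower bound on $\det D\ph$ and a uniform upper bound on $\|\ph - \on{Id}\|_{H^s}$, since exactly these two quantities govern the constant in Lemma~\ref{lem:sobolev_comp_cont}; the nonlinear Gronwall comparison applied to the mollified flows is precisely what provides this uniform control and makes the whole argument close.
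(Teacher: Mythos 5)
First, note that the paper does not prove this lemma at all: it is imported verbatim from \cite[Thm.~4.4]{Bruveris2014_preprint}, so your sketch has to be judged against the argument given there. Your overall skeleton (pointwise $C^1$ flow from the Sobolev embedding, spatial mollification, uniform a priori $H^s$ bounds, passage to the limit) is the right one, and you correctly identify the two quantities that must be controlled. But the step ``a nonlinear Gronwall comparison then supplies an $\ep$-uniform bound'' is a genuine gap as stated. Lemma~\ref{lem:sobolev_comp_cont} only gives a constant $C_s$ that is some unspecified monotone function of $\|\ph-\on{Id}\|_{H^s}$; if that dependence is superlinear, the comparison ODE $y'=C_s(y)\,\|u(t)\|_{H^s}$ can blow up before $t=1$ once $\|u\|_{L^1H^s}$ exceeds a threshold, so you do not get a bound on all of $I$ for arbitrary $u$. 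What actually closes the estimate is a refined composition inequality of the form $\|u\circ\ph\|_{H^s}\leq C\bigl(\|D\ph\|_{\infty},\inf\det D\ph\bigr)\bigl(\|u\|_{H^s}+\|u\|_{C^1}\,\|\ph-\on{Id}\|_{H^s}\bigr)$: the top norm of $\ph-\on{Id}$ enters only \emph{affinely}, multiplied by the $C^1$-norm of $u$, while the prefactor involves only $C^1$-level quantities of $\ph$ that are controlled a priori by Liouville's formula and $\|u\|_{L^1(I,C^1_b)}$. With that structure the Gronwall inequality is linear and gives a global bound $e^{C\|u\|_{L^1H^s}}$; without it the argument does not close.

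The continuity part has a second, independent gap: it is circular. To treat $\int_0^t\bigl(u(\tau)\circ\ph_n(\tau)-u(\tau)\circ\ph(\tau)\bigr)\ud\tau$ by ``the continuity statement of the same lemma'' you need $\ph_n(\tau)\to\ph(\tau)$ in $\mc D^s(\R^d)$ for a.e.\ $\tau$, which is precisely the conclusion you are proving; and no Gronwall inequality can ``close the loop'' here, because that term is not bounded by $\|\ph_n(\tau)-\ph(\tau)\|_{H^s}$ times anything --- the absence of a Lipschitz estimate for composition is exactly the obstacle you named. The standard repair is a three-term splitting with a smooth approximant $v$ of $u$: write $u\circ\ph_n-u\circ\ph=(u-v)\circ\ph_n-(u-v)\circ\ph+\bigl(v\circ\ph_n-v\circ\ph\bigr)$, absorb the first two terms uniformly using the operator-norm bound (valid on the fixed ball your a priori estimate provides) and $\|u-v\|_{L^1H^s}$ small, and use that $\ph\mapsto v\circ\ph$ \emph{is} Lipschitz on $H^s$-bounded sets when $v$ is smooth, so that Gronwall applies to the remaining term. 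Finally, a small point: to land in $C(I,\mc D^s(\R^d))$ you must also check strong $H^s$-continuity of $t\mapsto\ph(t)$, not just the weak-$\ast$ bound; this does follow from the integral equation once $\tau\mapsto\|u(\tau)\circ\ph(\tau)\|_{H^s}$ is known to be integrable, but it should be said.
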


We can estimate the $H^s$-norm of diffeomorphisms that are generated by flows. The following lemma was stated informally in \cite[Rem.~3.6]{Bruveris2014_preprint}.

\begin{lemma}
\label{lem:diff_ball_bound}
Let $s > d/2+1$. Given $r > 0$, there exist constants $M$ and $C$, such that the bounds
\[
\inf_{x \in \R^d} \det D\ph(t,x) > M\;\;\text{and}\;\;
\| \ph(t) - \on{Id} \|_{H^s} < C
\]
hold for diffeomorphisms $\ph \in \mc D^s(\R^d)$, that can be written as $\ph = \on{Fl}_1(u)$ with $\| u \|_{L^1} < r$.
\end{lemma}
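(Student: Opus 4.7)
The plan is to prove both bounds by Gronwall-type arguments applied to the integral form of the flow equation $\ph(t) - \on{Id} = \int_0^t u(\ta) \circ \ph(\ta)\,d\ta$, combining Liouville's formula for the determinant, the composition estimate of Lemma~\ref{lem:sobolev_comp_cont}, and the Sobolev embedding $H^{s-1}(\R^d) \hookrightarrow L^\infty(\R^d)$ that is available because $s > d/2+1$.

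For the determinant bound I would first apply Liouville's formula, which gives
\[
\det D\ph(t,x) = \exp\Bigl(\int_0^t (\on{div} u)(\ta, \ph(\ta,x))\,d\ta\Bigr).
\]
The Sobolev embedding yields a constant $C_0 = C_0(s,d)$ with $\|\on{div} u(\ta)\|_{L^\infty}\leq C_0\|u(\ta)\|_{H^s}$, whence $\det D\ph(t,x) \geq e^{-C_0 r}$ uniformly in $t \in [0,1]$ and $x \in \R^d$. Setting $M := \tfrac12 e^{-C_0 r}$ produces the first bound.

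For the $H^s$-bound I would run a continuity/bootstrap argument. Fix a candidate constant $C > 0$ to be determined, and define the stopping time $T^{\ast} = \sup\{t \in [0,1] : \|\ph(\ta) - \on{Id}\|_{H^s} < C \text{ for all } \ta \leq t\}$; since $\ph(0) = \on{Id}$ and $t \mapsto \ph(t)$ is continuous into $\mc D^s(\R^d)$ by Lemma~\ref{lem:ex_flow}, we have $T^{\ast} > 0$. On $[0,T^{\ast}]$ both hypotheses of Lemma~\ref{lem:sobolev_comp_cont} hold with parameters $M$ and $C$, producing a constant $K = C_s(M,C)$ with $\|u(\ta) \circ \ph(\ta)\|_{H^s} \leq K\|u(\ta)\|_{H^s}$. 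Integrating and using $\|u\|_{L^1(I,H^s)} < r$ gives $\|\ph(T^{\ast}) - \on{Id}\|_{H^s} \leq K r$, which contradicts the maximality of $T^{\ast}$ as soon as $C > K r$, forcing $T^\ast = 1$.

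The principal difficulty is that $K = C_s(M,C)$ depends on $C$ itself, so one must exhibit a choice with $C/C_s(M,C) > r$ for the given $r$. I would handle this by tracing through the proof of Lemma~\ref{lem:sobolev_comp_cont} and the Fa\`a di Bruno expansion underlying it, extracting the fact that $C_s(M,C)$ depends on $C$ only through the lower-order quantity $\|D\ph\|_{L^\infty}$, which is bounded a priori by $e^{C_0 r}$ via a direct Gronwall argument on the $C^1$-norm of the flow $|D\ph(t,x)| \leq 1 + \int_0^t \|Du(\ta)\|_{L^\infty}|D\ph(\ta,x)|\,d\ta$. Alternatively, one could partition $[0,1]$ into finitely many subintervals on which $\|u\|_{L^1(H^s)}$ is as small as desired, apply a small-time version of the $H^s$-estimate on each (where the bootstrap closes trivially), and concatenate via $\ph(t_i) = \psi_i(t_i)\circ\ph(t_{i-1})$, tracking how the $H^s$-norms accumulate under composition using Lemma~\ref{lem:sobolev_comp_cont}.
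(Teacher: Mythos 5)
The paper itself gives no proof of this lemma --- it is quoted from \cite[Rem.~3.6]{Bruveris2014_preprint} --- so your argument can only be judged on its own merits. Your determinant bound is correct: Liouville's formula together with $\|\on{div} u(\ta)\|_{L^\infty}\leq C_0\|u(\ta)\|_{H^s}$ (valid since $s-1>d/2$) gives $\det D\ph(t,x)\geq e^{-C_0 r}$ uniformly, and the flow equation holds for a.e.\ $t$ with enough regularity to justify the ODE for $\det D\ph$. The bootstrap setup for the $H^s$-bound is also the right idea, and you correctly identify the circularity as the crux.

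The flaw is in your first proposed resolution of that circularity: it is \emph{not} true that $C_{s'}(M,C)$ depends on $C$ only through $\|D\ph\|_{L^\infty}$. Already the Fa\`a di Bruno term $f'(\ph)\cdot D^s\ph$ forces the estimate $\|f\circ\ph\|_{H^s}\leq K\|f\|_{H^s}$ to carry a genuine dependence on $\|D^s\ph\|_{L^2}$, i.e.\ on the full $H^s$-norm of $\ph-\on{Id}$, not just on its Lipschitz norm. What is true (and what the cited preprint actually uses) is the \emph{tame} version: $\|f\circ\ph\|_{H^s}\leq K(\|D\ph\|_{L^\infty},\inf\det D\ph)\,(1+\|\ph-\on{Id}\|_{H^s})\,\|f\|_{H^s}$, affine in the top norm with coefficients controlled a priori by your Gronwall bound on $\|D\ph(t)\|_{L^\infty}\leq e^{C_0 r}$; feeding this into the integral equation and applying Gronwall in the variable $\|\ph(t)-\on{Id}\|_{H^s}$ then closes the loop. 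So route (a) is repairable but needs this sharper formulation, which Lemma~\ref{lem:sobolev_comp_cont} as stated does not supply. Your route (b) --- subdividing $[0,1]$ into at most $\lceil r/\de\rceil$ pieces on which $\|u\|_{L^1(H^s)}<\de$ with $\de$ so small that the bootstrap closes against a fixed $C=1$, then concatenating via $\ph(t)=\ps_i(t)\circ\ph(t_{i-1})$ and Lemma~\ref{lem:sobolev_comp_cont} --- avoids the issue entirely and is a complete, self-contained argument, since the number of pieces and all intermediate constants depend only on $r$, $s$ and $d$. I would make route (b) the proof and drop the unsupported claim in route (a).
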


Lemma~\ref{lem:diff_ball_bound} together with Lemma~\ref{lem:sobolev_comp_cont} imply the following.

\begin{lemma}
\label{lem:comp_diff_ball}
Let $s > d/2 + 1$ and $0 \leq s' \leq s$. Given $r > 0$, there exists a constant $C_{s'} = C_{s'}(r)$, such that the inequality
\[
\| f \circ \ph \|_{H^{s'}} \leq C_{s'} \| f \|_{H^{s'}}\,,
\]
holds for all $\ph \in \mc D^{s}(\R^d)$ that can be written as $\ph = \on{Fl}_1(u)$ with $\| u \|_{L^1} < r$ and all $f \in H^{s'}(\R^d)$.
\end{lemma}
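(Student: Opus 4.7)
The plan is essentially to chain together the two preceding lemmas; the statement itself flags this by saying that Lemma~\ref{lem:diff_ball_bound} ``together with Lemma~\ref{lem:sobolev_comp_cont} imply the following''. So this is a corollary rather than an independent result, and there is no real obstacle — the work has been done in the previous two lemmas.

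More concretely, given $r > 0$, I would first apply Lemma~\ref{lem:diff_ball_bound} to obtain constants $M > 0$ and $C > 0$ (depending only on $r$) such that every $\ph = \on{Fl}_1(u)$ with $\| u \|_{L^1} < r$ satisfies both $\inf_{x \in \R^d} \det D\ph(x) > M$ and $\| \ph - \on{Id} \|_{H^s} < C$. In other words, the flow map sends the $L^1$-ball of radius $r$ into the set of diffeomorphisms to which the uniform bound of Lemma~\ref{lem:sobolev_comp_cont} applies.

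Then I would apply Lemma~\ref{lem:sobolev_comp_cont} with these particular $M$ and $C$: it produces a constant $C_{s'} = C_{s'}(M, C)$ such that $\| f \circ \ph \|_{H^{s'}} \leq C_{s'} \| f \|_{H^{s'}}$ for every $f \in H^{s'}(\R^d)$ and every $\ph$ in that set. Since $M$ and $C$ were determined by $r$ alone, the resulting constant depends only on $r$ and $s'$ (and implicitly on $s$, $d$), which is exactly the claim.

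In summary, the ``proof'' is: pick $M$ and $C$ from Lemma~\ref{lem:diff_ball_bound}, feed them into Lemma~\ref{lem:sobolev_comp_cont}, and set $C_{s'}(r) := C_{s'}(M(r), C(r))$. I do not expect any technical obstacle, since both hypotheses required by Lemma~\ref{lem:sobolev_comp_cont} hold \emph{uniformly} over the $L^1$-ball by Lemma~\ref{lem:diff_ball_bound}, and the conclusion of Lemma~\ref{lem:sobolev_comp_cont} is already of exactly the desired form.
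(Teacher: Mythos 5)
Your proposal is correct and is exactly the argument the paper intends: the paper gives no separate proof, stating only that Lemma~\ref{lem:diff_ball_bound} together with Lemma~\ref{lem:sobolev_comp_cont} imply the result, which is precisely your chaining of the uniform bounds on $\det D\ph$ and $\|\ph-\on{Id}\|_{H^s}$ into the composition estimate. Nothing is missing.
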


Note that if $\ph = \on{Fl}_1(u)$, then $\ph\inv = \on{Fl}_1(v)$, where $v(t) = -u(1-t)$ and $\|v\|_{L^1(I,H^s)} = \|u \|_{L^1(I,H^s)}$. Hence under the same assumptions as in Lemma~\ref{lem:comp_diff_ball} we also have the inequality
\[
\| f \circ \ph\inv \|_{H^{s'}} \leq C_{s'} \| f \|_{H^{s'}}\,.
\]

\section{Smooth extension of maps}
\label{sec:extension_smooth}

Consider an embedding $q : M \to N$. It is well-known, that any function $f : M \to \R$ can be extended to a function $\tilde f : N \to \R$, such that $\tilde f \circ q = f$. This extension is of course not unique, but we can choose the extension map to depend smootly on the emdedding $q$.

\begin{proposition}
\label{prop:ext_function}
Let $M$ be a compact and $N$ a finite-dimensional manifold. Given $q \in \on{Emb}(M,N)$ there exists an open neighborhood $\mc U \subseteq \on{Emb}(M,N)$ of $q$ and a smooth map
\[
F : \mc U \x C^\infty(M,\R) \to C_c^\infty(N,\R)\,,
\]
linear in the second component, that satisfies
\[
F(r, f) \circ r = f\,,
\]
for all $r \in \mc U$ and $f \in C^\infty(M,\R)$.
\end{proposition}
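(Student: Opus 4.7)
The plan is to build $F$ using a fixed tubular-neighborhood retraction onto $q(M)$, a fixed bump function supported in it, and a smooth $r$-dependent correction diffeomorphism of $M$. Since $M$ is compact and $q$ is an embedding, $q(M)$ is a compact embedded submanifold of $N$. Fixing a Riemannian metric on $N$, the normal exponential map on a neighborhood of the zero section of the normal bundle of $q(M)$ is a diffeomorphism onto an open neighborhood $U \subseteq N$ of $q(M)$, and the associated projection gives a smooth retraction $\pi : U \to q(M)$. Choose $\chi \in C_c^\infty(N,\R)$ with $\on{supp}(\chi) \subseteq U$ and $\chi \equiv 1$ on an open set $V$ satisfying $q(M) \subseteq V$ and $\ol V \subseteq U$.

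Next, define
\[
\mc U = \{\, r \in \on{Emb}(M,N) \,:\, r(M) \subseteq V,\ \psi_r := q\inv \circ \pi \circ r \in \on{Diff}(M)\,\}.
\]
Since $\psi_q = \on{Id}_M$, $\on{Diff}(M)$ is open in $C^\infty(M,M)$ for $M$ compact, and $r \mapsto \psi_r$ is continuous, $\mc U$ is an open neighborhood of $q$ in $\on{Emb}(M,N)$. Set $\pi_r := \psi_r\inv \circ q\inv \circ \pi : U \to M$; then $\pi_r \circ r = \on{Id}_M$ by construction. Define
\[
F(r,f)(x) = \chi(x)\, f(\pi_r(x)) \text{ for } x \in U, \qquad F(r,f)(x) = 0 \text{ for } x \notin U.
\]
The function $F(r,f)$ is smooth on $N$ because it vanishes outside $\on{supp}(\chi) \subseteq U$ and is smooth on $U$, it lies in $C_c^\infty(N,\R)$, is linear in $f$, and $F(r,f) \circ r = (\chi \circ r)\cdot (f \circ \pi_r \circ r) = f$ since $r(M) \subseteq V \subseteq \{\chi = 1\}$.

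The remaining task is smoothness of $F : \mc U \x C^\infty(M,\R) \to C_c^\infty(N,\R)$, which reduces to smoothness of its building blocks in the convenient calculus on manifolds of mappings, see~\cite{Michor1997}. The assignment $r \mapsto \pi \circ r$, and hence $r \mapsto \psi_r$, is smooth, and inversion $\on{Diff}(M) \to \on{Diff}(M)$ is smooth, so $r \mapsto \pi_r \in C^\infty(U, M)$ is smooth. The composition map $(f, \pi_r) \mapsto f \circ \pi_r$ from $C^\infty(M,\R) \x C^\infty(U,M)$ into $C^\infty(U,\R)$ is smooth, and multiplication by the fixed $\chi$ is linear and continuous; since the resulting function is supported in the fixed compact set $\on{supp}(\chi)$, it lies in the Fréchet space of smooth functions with support in $\on{supp}(\chi)$, which embeds continuously into $C_c^\infty(N,\R)$. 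The main obstacle is thus merely bookkeeping in the convenient-calculus framework; every individual smoothness statement is standard, and the fixed-compact-support trick is what makes the target $C_c^\infty(N,\R)$ unproblematic.
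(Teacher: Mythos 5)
Your proof is correct and follows essentially the same route as the paper: a tubular neighborhood of $q(M)$ via the normal exponential map, a correction diffeomorphism $\psi_r = q\inv\circ\pi\circ r$ of $M$ (the paper's $\ph(r)$) whose invertibility defines the open neighborhood $\mc U$, a fixed cutoff equal to $1$ near $q(M)$, and the formula $F(r,f)=\chi\cdot(f\circ\psi_r\inv\circ q\inv\circ\pi)$. The only cosmetic difference is that the paper works with the normal bundle of $q$ over $M$ and shrinks a bounded neighborhood of the zero section to guarantee the cutoff exists, while you impose $\ol V\subseteq U$ directly; your concluding smoothness discussion is no less detailed than the paper's own.
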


\begin{proof}
We follow the proof of \cite[Thm.~44.1]{Kriegl1997} regarding the construction of an open neighborhood of $q$. Fix a Riemannian metric on $N$ and let $\on{exp}$ be its exponential map. Next, let $\pi : \mc N(q) \to M$ be the normal bundle of $q$, defined as $\mc N(q)_x = \left(T_x q(T_x M)\right)^\perp \subseteq T_{q(x)}N$ for $x \in M$; the orthogonal complement is taken with respect to the fixed Rie\-man\-nian metric on $N$. Then $\bar q$ is an injective vector bundle homomorphism over $q$:
\[ 
\xymatrix{
\mc N(q) \ar[d]_\pi \ar[r]^{\bar q} & TN \ar[d]^{\pi_{TN}} \\
M \ar[r]^q & N
}
\]

Let $U$ be a bounded open neighborhood of the zero section of $\mc N(q)$, small enough that $\on{exp} \circ \bar q : U \to N$ is a diffeomorphism onto its image; set $\ta = \on{exp} \circ \bar q$ and $V = \ta(U)$. We shrink the set $U$ to $U_2 = \frac 12 U$; boundedness of $U$ is necessary to have $\overline{U_2} \subsetneq U$. Set $V_2 = \ta(U_2)$. Define the open set
\[
\tilde{\mc U} = \{ r \in \on{Emb}(M,N) \,:\, r(M) \subseteq V_2 \}\,.
\]

Note that $\pi \circ \ta\inv \circ q(x) = x$ and we can consider the map $r \mapsto \ph(r) = \pi \circ \ta\inv \circ r \in C^\infty(M,M)$. Since $\on{Diff}(M)$ is open in $C^\infty(M,M)$ we have $\pi \circ \ta\inv \circ r \in \on{Diff}(M)$ for $r$ sufficiently close to $q$. The required open neighborhood is $\mc U = \tilde{\mc U} \cap \ph\inv\left(\on{Diff}(M)\right)$.

Let $\et \in C^\infty(N,\R)$ be a function satisfying $\et|_{V_2} \equiv 1$ and $\on{supp}(\et) \subseteq V$. Given $f \in C^\infty(M,\R)$, define $\tilde f \in C^\infty(V,\R)$ via $\tilde f = f \circ \ph(r)\inv \circ \pi \circ \ta\inv$. Then the map $f \mapsto \tilde f$ is smooth and we can define the extension operator as
\[
F(r,f)(y) = \et(y) \tilde f(y)\,.
\]
Note that this is well-defined and smooth. Furthermore we have for $r \in \mc U$,
\[
F(r,f) \circ r = f \circ \ph(r)\inv \circ \pi \circ \ta\inv \circ r 
= f \circ \ph(r)\inv \circ \ph(r) = f\,.
\]
This concludes the proof.
\end{proof}

\begin{corollary}
\label{cor:extend_emb_to_diff}
Let $M$ be a compact and $N$ a finite-dimensional manifold. Given $q \in \on{Emb}(M,N)$ there exists an open neighborhood $\mc U \subseteq \on{Emb}(M,N)$ of $q$ and a smooth map
\[
E : \mc U \to \on{Diff}_c(N)\,,
\]
such that $E(q) = \on{Id}_{\R^d}$ and
\[
r = E(r) \circ q\,.
\]
holds for all $r \in \mc U$.
\end{corollary}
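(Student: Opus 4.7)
The plan is to build $E(r)$ as a compactly supported near-identity perturbation of $\on{Id}_N$ realising the identity $E(r)\circ q=r$. Concretely, I would apply Proposition~\ref{prop:ext_function} to the ``displacement'' $r-q$ (thought of pointwise along $q$) to obtain a compactly supported vector field on $N$ that interpolates this displacement on $q(M)$, and then add it to $\on{Id}_N$. I would carry out the construction in the case $N=\R^d$, which is the only case used in the rest of the paper; for a general $N$ the affine structure must be replaced by the exponential map of an auxiliary Riemannian metric, but no new idea is required.

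Fix a neighbourhood $\mc U_0$ of $q$ and an extension map $F:\mc U_0\x C^\infty(M,\R)\to C_c^\infty(\R^d,\R)$ provided by Proposition~\ref{prop:ext_function}. Applying $F$ componentwise gives a smooth map
\[
\tilde F:\mc U_0\x C^\infty(M,\R^d)\to C_c^\infty(\R^d,\R^d)\,,\quad \tilde F(r,f)\circ r=f\,.
\]
With $v_r:=r-q\in C^\infty(M,\R^d)$ I would set
\[
E(r):=\on{Id}_{\R^d}+\tilde F(q,v_r)\,.
\]
The map $r\mapsto v_r$ is affine, hence $r\mapsto\tilde F(q,v_r)$ is smooth into $C_c^\infty(\R^d,\R^d)$. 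The identity $\tilde F(q,v_r)\circ q=v_r$ yields $E(r)\circ q=q+v_r=r$, and $v_q=0$ together with linearity of $\tilde F$ in its second argument gives $E(q)=\on{Id}_{\R^d}$.

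The main---and essentially only nontrivial---step is to shrink $\mc U_0$ to an open neighbourhood $\mc U$ of $q$ on which each $E(r)$ is in fact a diffeomorphism. Since $\tilde F$ is continuous and vanishes at $(q,0)$, one can arrange $\sup_{y\in\R^d}\|D\tilde F(q,v_r)(y)\|<1/2$ for every $r\in\mc U$, which makes $DE(r)=\on{Id}+D\tilde F(q,v_r)$ everywhere invertible. Thus $E(r)$ is a local diffeomorphism that coincides with $\on{Id}_{\R^d}$ off a compact set, and Hadamard's global inverse function theorem (or a direct degree/covering argument exploiting properness) then forces $E(r)\in\on{Diff}_c(\R^d)$. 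Since $\on{Diff}_c(\R^d)-\on{Id}$ is open in $C_c^\infty(\R^d,\R^d)$ and $E$ is already smooth into this affine space, the resulting map $E:\mc U\to\on{Diff}_c(\R^d)$ is smooth, which completes the proof.
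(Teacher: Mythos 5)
Your construction is exactly the paper's: set $E(r)=\on{Id}_{\R^d}+F(q,r-q)$ with $F$ from Proposition~\ref{prop:ext_function} and shrink the neighbourhood so that $E(r)$ lands in $\on{Diff}_c(\R^d)$. The only difference is that where the paper simply invokes the openness of $\on{Diff}_c(\R^d)-\on{Id}$ in $C^\infty_c(\R^d,\R^d)$, you reprove that fact by hand via the derivative bound and Hadamard's theorem; both are correct.
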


\begin{proof}
We define
\[
E(r) = \on{Id}_{\R^d} + F(q, q-q)\,,
\]
with $F(q,f)$ as in Proposition~\ref{prop:ext_function}. Then $E(r) \circ q = r$ as required at it remains to verify that $E(r) \in \on{Diff}_c(\R^d)$. Because $\on{Id}_{\R^d} - \on{Diff}_c(\R^d)$ is open in $C^\infty_c(\R^d,\R^d)$, there exists a neighborhood $\mc V$ of the $0$-function, such that $r-q \in \mc V$ implies $\on{Id}_{\R^d} + F(q,r-q) \in \on{Diff}_c(\R^d)$. Now set $\mc U = q + \mc V$.
\end{proof}

\section{Trace theorem in Sobolev spaces}
\label{sec:trace_sobolev}

The general trace theorem for Sobolev spaces states that
\[
\on{Tr}_M H^s_p(N) = B^{s-\frac{d-m}p}_{p,p}(M)\,,
\]
where $M \subseteq N$ are manifolds of bounded geometry; a proof can be found in \cite{Grosse2013}. Here we are interested in the continuous dependance of the trace map on the submanifold $M$. We restrict ourselves to the case $M$ compact and $N=\R^d$.

\begin{lemma}
\label{lem:trace_theorem}
Let $M$ be a compact manifold with $\on{dim} M = m$ and $q \in \on{Emb}(M,\R^d)$. If $s > d/2$ and $s' = s-(d-m)/2$, then the trace map
\[
\on{Tr}_q : H^s(\R^d) \to H^{s'}(M)\,,\quad f \mapsto f \circ q\,,
\]
is a continuous operator. Furthermore, there exists a continuous extension operator
\[
\on{Ex}_q : C^\infty(M) \to C^\infty_c(\R^d)\,,
\]
such that for each $s$ and $s'$ as above it extends continuously to
\[
\on{Ex}_q : H^{s'}(M) \to H^s(\R^d)\,,
\]
and satisfies $\on{Tr}_q \circ \on{Ex}_q = \on{Id}_{H^{s'}(M)}$, i.e., $\on{Ex}_q(f) \circ q = f$.
\end{lemma}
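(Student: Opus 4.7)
The approach is to reduce both claims to the classical flat trace/extension theorems for the inclusion $\R^m \x \{0\} \subset \R^m \x \R^{d-m}$, via a tubular neighborhood of $q(M)$. I would fix a tubular neighborhood $V$ of $q(M)$ in $\R^d$ together with a diffeomorphism $\Phi : \mc U \to V$ from a neighborhood $\mc U$ of the zero section of the normal bundle $\mc N(q)$, arranged so that $\Phi$ restricts to $q$ on $M$. Choosing finitely many local trivializations of $\mc N(q)$ subordinate to a cover of $M$ together with a partition of unity $\{\rh_i\}_i$, the pair $(V, q(M))$ is locally modelled on $(B_i \x B_\de, B_i \x \{0\})$ in $\R^m \x \R^{d-m}$. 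For the trace, given $f \in H^s(\R^d)$ I would multiply by a cut-off $\et \in C^\infty_c(\R^d)$ equal to $1$ near $q(M)$ and supported in $V$, transfer each piece to $\R^m \x \R^{d-m}$ via the charts, and apply the classical Fourier-side estimate $\int_{\R^{d-m}} (1+|\xi|^2+|\ze|^2)^{-s} \ud\ze \leq C \langle \xi \rangle^{-2s+(d-m)}$, valid for $s > (d-m)/2$. Reassembling via the $\rh_i$ then yields continuity of $\on{Tr}_q$; multiplication by cut-offs and pullback by chart transitions preserve every $H^s$-scale by Lemma~\ref{lem:sob_mult_estimate} and local versions of Lemma~\ref{lem:sobolev_comp_cont}.

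For the extension, my plan is to invert this scheme using a Fourier-multiplier extension in the flat model. Fix a Schwartz profile $\ps \in \schwartz(\R^{d-m}, \R)$ with $\int \ps \ud\ze = (2\pi)^{(d-m)/2}$ and set
\[
\widehat{\on{Ex}^0(g)}(\xi, \ze) = \hat g(\xi)\, \ps(\ze / \langle \xi \rangle)\, \langle \xi \rangle^{-(d-m)}\,.
\]
The substitution $\ze = \langle \xi \rangle w$ together with the identity $1 + |\xi|^2 + \langle\xi\rangle^2 |w|^2 = \langle\xi\rangle^2(1+|w|^2)$ gives the Plancherel bound
\[
\| \on{Ex}^0(g) \|_{H^s(\R^d)}^2 = \| \ps \|_{H^s(\R^{d-m})}^2 \cdot \| g \|_{H^{s'}(\R^m)}^2\,,
\]
so $\on{Ex}^0 : H^{s'}(\R^m) \to H^s(\R^d)$ is bounded for every $s > (d-m)/2$ simultaneously, preserves smoothness since $\ps$ is Schwartz, and satisfies $\on{Ex}^0(g)(x,0) = g(x)$ by the choice of normalisation. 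The global operator $\on{Ex}_q(f)$ is then assembled by applying $\on{Ex}^0$ to $\rh_i f$ in each chart, pushing forward through $\Phi$, summing, and multiplying by a fixed cut-off to extend by zero to $\R^d$; the identity $\on{Tr}_q \circ \on{Ex}_q = \on{Id}$ follows from $\on{Ex}^0(g)(x,0) = g(x)$ together with $\sum_i \rh_i = 1$.

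The main obstacle is producing a single operator $\on{Ex}_q$ that maps $C^\infty(M) \to C^\infty_c(\R^d)$ and simultaneously extends continuously at every Sobolev scale $s > d/2$. This is precisely what the Fourier-multiplier form achieves, because the Schwartz profile $\ps$ controls all its $H^s$-moments at once, so a single choice of $\ps$ handles every scale with constants depending only on $\ps$; the subsequent partition-of-unity patching and cut-off steps preserve every $H^s$-scale by Lemma~\ref{lem:sob_mult_estimate}.
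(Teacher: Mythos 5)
Your plan is correct, but it takes a genuinely different route from the paper. The paper disposes of this lemma by citing the general trace theorem for manifolds of bounded geometry (Gro\ss e--Schneider, Theorem~4.10 of the cited reference) and then asserting, ``by an examination of the proof,'' that the extension operator constructed there is independent of $s$ and maps $C^\infty(M)$ into $C^\infty_c(\R^d)$. You instead reconstruct the classical proof from scratch: tubular neighbourhood, partition of unity, the Cauchy--Schwarz/Fourier estimate $\int_{\R^{d-m}}(1+|\xi|^2+|\ze|^2)^{-s}\ud\ze \leq C\langle\xi\rangle^{-2s+(d-m)}$ for the trace, and the scaled Schwartz-profile multiplier $\hat g(\xi)\,\ps(\ze/\langle\xi\rangle)\langle\xi\rangle^{-(d-m)}$ for the extension. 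This is more work but buys something real: the two properties the paper only asserts (scale-independence of $\on{Ex}_q$ and preservation of smoothness with compact support) become transparent, since a single Schwartz $\ps$ controls every $H^s$-moment at once and the final cut-off produces compact support. Two small points to tidy up in a full write-up. First, with the paper's convention for $\|\cdot\|_{H^s}$ your Plancherel constant is $\int_{\R^{d-m}}(1+|w|^2)^s|\ps(w)|^2\ud w$, i.e.\ a weighted $L^2$-norm of $\ps$ itself rather than $\|\ps\|_{H^s(\R^{d-m})}^2$; this is harmless since it is finite for every $s$ when $\ps$ is Schwartz. Second, $\on{Ex}^0(\rh_i f)$ is not compactly supported in either the tangential or the normal variable, so you must cut each local piece off inside its own chart (with a cut-off equal to $1$ on a neighbourhood of $\on{supp}\rh_i \x \{0\}$) before pushing forward through $\Phi$; a single global cut-off applied after summing is not quite enough to make the push-forwards well defined, though the fix is routine and preserves both the trace identity and every $H^s$-scale by Lemma~\ref{lem:sob_mult_estimate}.
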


\begin{proof}
The embedding $q$ remains fixed and $q : M \to q(M)$ is a diffeomorphism between smooth manifolds; thus we can regard $M$ as a submanifold of $\R^d$ with $q$ the canonical embedding. In this case the composition $\mu^q$ coincides with the trace map, $\mu^q(f) = \on{Tr}_M f = f|_M$. The boundedness of the trace map and the construction of an extension operator are shown in \cite[Theorem~4.10]{Grosse2013}. An examination of the proof shows that the extension operator constructed there maps smooth functions on $M$ to compactly supported functions on $\R^d$ and that the construction does not depend on the choice of $s$.
\end{proof}

We know show that the trace map depends continuously on the submanifold and the extension can be chosen to be continuous as well.

\begin{proposition}
\label{prop:trace_theorem_cont}
Let $M$ be a compact manifold with $\dim M = m$, $s > d/2$ and $s'=s-(d-m)/2$. Then the trace operator and its dual
\begin{align*}
\on{Tr} &: \on{Emb}(M,\R^d) \x H^s(\R^d) \to H^{s'}(M)\,,\quad
(q,f) \mapsto \on{Tr}_q f \\
\on{Tr}^\ast &: \on{Emb}(M,\R^d) \x H^{-s'}(M) \to H^{-s}(\R^d) \,,\quad
(q,\al) \mapsto \on{Tr}^\ast_q \al\,,
\end{align*}
are continuous maps and around each $q \in \on{Emb}(M,\R^d)$ there exists an open neighborhood $\mc U \subseteq \on{Emb}(M,\R^d)$ and an extension map, such that it and its dual 
\begin{align*}
\on{Ex} &: \mc U \x H^{s'}(M) \to H^{s}(\R^d) \,,\quad
(r,f) \mapsto \on{Ex}_r f\,, \\
\on{Ex}^\ast &: \mc U \x H^{-s}(\R^d) \to H^{-s'}(M) \,,\quad
(r,\al) \mapsto \on{Ex}^\ast_r \al \,.
\end{align*}
are continuous.
\end{proposition}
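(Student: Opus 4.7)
The plan is to reduce everything to a fixed reference embedding by means of Corollary~\ref{cor:extend_emb_to_diff}. Given $q \in \on{Emb}(M,\R^d)$, that corollary supplies an open neighbourhood $\mc U$ and a smooth map $E : \mc U \to \on{Diff}_c(\R^d)$ with $E(q)=\on{Id}_{\R^d}$ and $r = E(r)\circ q$ for all $r \in \mc U$. Since $\on{Diff}_c(\R^d)$ sits inside $\mc D^s(\R^d)$ with continuous inclusion and $\mc D^s(\R^d)$ is a topological group (so inversion is continuous), the maps $r\mapsto E(r)$ and $r\mapsto E(r)^{-1}$ are continuous from $\mc U$ into $\mc D^s(\R^d)$. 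Because $q$ is arbitrary, proving continuity on each such $\mc U$ suffices.

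For the trace operator, the factorisation
\[
\on{Tr}_r f = f \circ E(r) \circ q = \on{Tr}_q\bigl(R_{E(r)} f\bigr)
\]
expresses $\on{Tr}$ on $\mc U \times H^s(\R^d)$ as the composition of the bounded linear map $\on{Tr}_q$ from Lemma~\ref{lem:trace_theorem}, the continuous composition operator from Lemma~\ref{lem:sobolev_comp_cont}, and the continuous map $r \mapsto E(r)$. Taking transposes in the same identity yields $\on{Tr}_r^{\ast} = R_{E(r)}^{\ast}\circ \on{Tr}_q^{\ast}$, and continuity of $\on{Tr}^{\ast}$ then follows from the continuity of the transposed composition supplied by Lemma~\ref{lem:sobolev_comp_trans_cont}, together with boundedness of $\on{Tr}_q^{\ast}$.

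For the extension, I would define
\[
\on{Ex}_r f := \on{Ex}_q f \circ E(r)^{-1} = R_{E(r)^{-1}}\bigl(\on{Ex}_q f\bigr),
\]
where $\on{Ex}_q$ is the fixed extension operator from Lemma~\ref{lem:trace_theorem}. The identity $\on{Ex}_r f \circ r = \on{Ex}_q f \circ E(r)^{-1} \circ E(r)\circ q = f$ is immediate, so this is a genuine extension, and continuity of $(r,f)\mapsto \on{Ex}_r f$ into $H^s(\R^d)$ follows by composing boundedness of $\on{Ex}_q$, continuity of $R$ via Lemma~\ref{lem:sobolev_comp_cont}, and continuity of $r\mapsto E(r)^{-1}$ in $\mc D^s(\R^d)$. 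The dual is $\on{Ex}_r^{\ast} = \on{Ex}_q^{\ast}\circ R_{E(r)^{-1}}^{\ast}$, and Lemma~\ref{lem:sobolev_comp_trans_cont} handles continuity just as before.

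There is no single difficult step; the only point requiring a moment of care is the topological bookkeeping between the Fréchet topology on $\on{Diff}_c(\R^d)$ (in which $E$ is smooth) and the Hilbert manifold topology on $\mc D^s(\R^d)$ (in which composition and its transpose are known to be continuous). Once the inclusion $\on{Diff}_c(\R^d)\hookrightarrow \mc D^s(\R^d)$ is observed to be continuous and inversion in $\mc D^s(\R^d)$ is invoked, the rest is a routine assembly of Lemmas~\ref{lem:sobolev_comp_cont}, \ref{lem:sobolev_comp_trans_cont} and~\ref{lem:trace_theorem}.
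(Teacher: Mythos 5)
Your proposal is correct and follows essentially the same route as the paper: reduce to a fixed $q$ via the map $E$ from Corollary~\ref{cor:extend_emb_to_diff}, factor $\on{Tr}_r = \on{Tr}_q\circ R_{E(r)}$ and set $\on{Ex}_r = R_{E(r)\inv}\circ\on{Ex}_q$, then invoke Lemmas~\ref{lem:sobolev_comp_cont}, \ref{lem:sobolev_comp_trans_cont} and~\ref{lem:trace_theorem}. The only cosmetic difference is that you obtain continuity of $r\mapsto E(r)\inv$ from inversion in the topological group $\mc D^s(\R^d)$, whereas one can equally note that $E$ and inversion are smooth in $\on{Diff}_c(\R^d)$ before passing to $\mc D^s(\R^d)$; either way the argument is sound.
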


\begin{proof}
The inclusion $\on{Diff}_c(\R^d) \subseteq \mc D^s(\R^d)$ is smooth and therefore the map constucted in Corollary~\ref{cor:extend_emb_to_diff} is a smooth map $E: \mc U \to \mc D^s(\R^d)$. Since $E$ is a map between Fr\'echet spaces smoothness implies continuity.

Fix $q \in \on{Emb}(M,\R^d)$ and let $r \in \mc U$ with $\mc U$ given by Corollary~\ref{cor:extend_emb_to_diff}. Write
\begin{align*}
\on{Tr}_r f &= f \circ E(r) \circ E(r)\inv r = \on{Tr}_{q} R_{E(r)} f \\
\on{Tr}_r^\ast \al &= R_{E(r)}^\ast \on{Tr}_q^\ast \al\,,
\end{align*}
which shows using Lemma~\ref{lem:sobolev_comp_cont} and Lemma~\ref{lem:sobolev_comp_trans_cont} that $\on{Tr}$ and $\on{Tr}^\ast$ are continuous.

To construct the extension operators we proceed similarly. With $q$ fixed we let $\on{Ex}_q$ be the extension operator from Lemma~\ref{lem:trace_theorem} and we define for $r \in \mc U$,
\begin{align*}
\on{Ex}_r f &= R_{E(r)\inv} \on{Ex}_q f\,.
\end{align*}
Then $\on{Ex}$ and $\on{Ex}^\ast$ are continuous in $(r,f)$ and $(r,\al)$ respectively and we have
\[
\on{Tr}_r \on{Ex}_r f = \on{Tr}_q R_{E(r)} R_{E(r)\inv} \on{Ex}_q = \on{Id}_{H^{s'}(M)}\,.\qedhere
\]
\end{proof}

\section{%
\texorpdfstring%
{Outer metrics on $\on{Emb}(M,\R^d)$}%
{Outer metrics on Emb(M,Rd)}%
}
\label{sec:outer}

\begin{definition}
A \emph{right-invariant Sobolev metric of order $s$} is a Riemannian metric $G^{\mc D}$ on $\on{Diff}_c(\R^d)$ satisfying
\[
G^{\mc D}_{\ph}(X_\ph,Y_\ph) = G^{\mc D}_{\on{Id}}(X_\ph\circ\ph\inv, Y_\ph\circ\ph\inv)\,,
\]
for all $X_\ph,Y_\ph\in T_\ph\on{Diff}_c(\R^d)$ with the property that the inner product $G^{\mc D}_{\on{Id}}(\cdot,\cdot)$ induces on $\mf X_c(\R^d)$ the Sobolev $H^s$-topology.
\end{definition}

In fact, any Sobolev inner product $\langle \cdot, \cdot \rangle_{H^s}$ of order $s$ gives rise to a right-invariant Riemannian metric $G^{\mc D}$ via the formula
\[
G^{\mc D}_{\ph}(X_\ph,Y_\ph) = \langle X_\ph\circ\ph\inv, Y_\ph\circ\ph\inv \rangle_{H^s}\,.
\]
Because $\on{Diff}_c(\R^d)$ is a Lie group, this is defines a smooth Riemannian metric on $\on{Diff}_c(\R^d)$. Smoothness is a nontrivial question, if we wanted to extend $G^{\mc D}$ to the Sobolev completion $\mc D^s(\R^d)$, because the latter space is a topological group but not a Lie group. In the smooth category, however, we encounter no problems. There is a one-to-one correspondence between Sobolev inner products and right-invariant Sobolev metrics.

\subsection{Metrics on $\on{Diff}_c(\R^d)$}
Let $G^{\mc D}$ be a right-invariant Sobolev metric of order $s$ with $s \geq 0$ on $\on{Diff}_c(\R^d)$. Denote by
\[
A : H^s(\R^d,\R^d) \to H^{-s}(\R^d,\R^d)\,,
\]
the Riesz isomorphism associated to $G^{\mc D}_{\on{Id}}(\cdot, \cdot)$, i.e.,
\[
G^{\mc D}_{\on{Id}}(X, Y) = \langle AX, Y \rangle_{H^{-s}}\,,
\]
for all $X,Y \in \mf X_c(\R^d)$. We extend $G^{\mc D}_{\on{Id}}(\cdot,\cdot)$ to the completion $H^s(\R^d,\R^d)$ and we will use the notation
\[
\| X \|^2_A = G^{\mc D}_{\on{Id}}(X,X)\,.
\]
Because $\| \cdot\|_A$ induces the $H^s$-topology, there exists a constant $C > 0$, such that
\[
C\inv \| X\|_{H^s} \leq \| X \|_A \leq C \| X \|_{H^s}\,,
\]
holds for all $X \in H^s(\R^d,\R^d)$.

We denote by $\on{dist}^{\mc D}$ the geodesic distance induced by $G^{\mc D}$ on $\on{Diff}_c(\R^d)$. It is given by
\[
\on{dist}^{\mc D}(\ph_0, \ph_1) = \inf \left\{ L(\ph) \,:\, \ph \in C^\infty([0,1], \on{Diff}_c(\R^d)),\, \ph(0)= \ph_0,\, \ph(1)=\ph_1 \right\}\,,
\]
where
\[
L(\ph) = \int_0^1 \sqrt{G^{\mc D}_{\ph(t)}(\p_t \ph(t), \p_t \ph(t))} \ud t\,,
\]
is the length of a path.

\begin{lemma}
Let $s > d/2+1$. The geodesic distance $\on{dist}^{\mc D}$ of a right-invariant $H^s$-metric $G^{\mc D}$ on $\on{Diff}_c(\R^d)$ induces the $H^s$-topology on $\on{Diff}_c(\R^d)$, i.e.,
\[
\on{dist}^{\mc D}(\ph_n, \ph) \to 0
\quad\Leftrightarrow\quad
\| \ph_n - \ph\|_{H^s} \to 0\,.
\]
\end{lemma}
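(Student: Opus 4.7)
The plan is to exploit right-invariance of $G^{\mc D}$ to reduce both implications to the case $\ph = \on{Id}$, and then relate path length to the $H^s$-norm in two ways: the hard direction by expressing a nearly minimizing path as the flow of a vector field with small $L^1$-norm, the easy direction by a straight-line interpolation between $\on{Id}$ and $\psi_n := \ph_n \circ \ph\inv$. Right-invariance gives $\on{dist}^{\mc D}(\ph_n, \ph) = \on{dist}^{\mc D}(\psi_n, \on{Id})$, and Lemma \ref{lem:sobolev_comp_cont} applied to right composition with $\ph\inv$ (and, symmetrically, with $\ph$) shows that $\|\ph_n - \ph\|_{H^s} \to 0$ is equivalent to $\|\psi_n - \on{Id}\|_{H^s} \to 0$, so the problem reduces to the case $\ph_n \to \on{Id}$.

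For the direction $\on{dist}^{\mc D}(\psi_n, \on{Id}) \to 0 \Rightarrow \|\psi_n - \on{Id}\|_{H^s} \to 0$, I would pick for each $n$ a smooth path $\ga_n : [0,1] \to \on{Diff}_c(\R^d)$ from $\on{Id}$ to $\psi_n$ with $L(\ga_n) \leq 2\on{dist}^{\mc D}(\psi_n, \on{Id})$, and set $u_n(t) = \p_t \ga_n(t) \circ \ga_n(t)\inv$, so that $\ga_n = \on{Fl}(u_n)$. The equivalence of $\|\cdot\|_A$ with $\|\cdot\|_{H^s}$ yields $\|u_n\|_{L^1([0,1], H^s)} \leq C \, L(\ga_n) \to 0$. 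Then either continuity of the flow map (Lemma \ref{lem:ex_flow}) gives $\psi_n = \on{Fl}_1(u_n) \to \on{Fl}_1(0) = \on{Id}$ in $\mc D^s(\R^d)$, or one computes directly from $\psi_n - \on{Id} = \int_0^1 u_n(t) \circ \ga_n(t) \ud t$ and uses Lemmas \ref{lem:diff_ball_bound} and \ref{lem:comp_diff_ball} to obtain $\|\psi_n - \on{Id}\|_{H^s} \leq C_s \|u_n\|_{L^1} \to 0$.

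For the converse, I would use the straight-line path $\ga_n(t) = \on{Id} + t(\psi_n - \on{Id})$. Once $\|\psi_n - \on{Id}\|_{H^s}$ is small enough, the Sobolev embedding $H^s \hookrightarrow C^1$ ensures $\det D\ga_n(t,x) > 0$ uniformly in $t \in [0,1]$ and $x \in \R^d$; since $\ga_n(t) - \on{Id}$ is smooth and compactly supported, this places $\ga_n(t)$ in $\on{Diff}_c(\R^d)$. With $\p_t \ga_n \equiv \psi_n - \on{Id}$,
\[
L(\ga_n) = \int_0^1 \|(\psi_n - \on{Id}) \circ \ga_n(t)\inv\|_A \ud t \leq C \int_0^1 \|(\psi_n - \on{Id}) \circ \ga_n(t)\inv\|_{H^s} \ud t\,.
\]
Because $\ga_n(t)$ is close to $\on{Id}$ in $H^s$ uniformly in $t$ and inversion is continuous on $\mc D^s$, the same holds for $\ga_n(t)\inv$, so Lemma \ref{lem:sobolev_comp_cont} applies with a constant $C_s$ independent of $n$ and $t$, giving $L(\ga_n) \leq C C_s \|\psi_n - \on{Id}\|_{H^s} \to 0$, hence $\on{dist}^{\mc D}(\psi_n, \on{Id}) \to 0$.

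The main obstacle is the uniform bookkeeping of constants in Lemma \ref{lem:sobolev_comp_cont}. In the first direction this is cleanly handled by Lemma \ref{lem:diff_ball_bound}, since the generating vector fields $u_n$ have small $L^1$-norm. In the second direction the path $\ga_n$ is not produced as a flow, so I would need a short auxiliary argument showing that $\ga_n(t)$ and $\ga_n(t)\inv$ lie in a fixed bounded subset of $\mc D^s$ uniformly in $n$ and $t \in [0,1]$, on which the composition estimate applies with a $t$-independent constant.
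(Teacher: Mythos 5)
Your proof is correct and follows essentially the same route as the paper: a straight-line interpolation for the easy direction, and for the hard direction the right-logarithmic derivative $u_n$ of a nearly minimizing path together with continuity of the flow map (Lemma~\ref{lem:ex_flow}); the preliminary reduction to $\ph=\on{Id}$ via right-invariance is a harmless normalization that the paper simply skips by working with the linear path $(1-t)\ph_n+t\ph$ and the set $\{\ps_n(t)\inv\}$ directly. One small fix: since a priori $\on{dist}^{\mc D}(\ps_n,\on{Id})$ could vanish for $\ps_n\neq\on{Id}$ (non-degeneracy of the distance is part of what the lemma establishes), choose paths with $L(\ga_n)\leq \on{dist}^{\mc D}(\ps_n,\on{Id})+\tfrac1n$ rather than $2\,\on{dist}^{\mc D}(\ps_n,\on{Id})$.
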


\begin{proof}
Assume $\| \ph_n - \ph \|_{H^s} \to 0$. Consider the linear path $\ps_n(t) = (1-t)\ph_n + t\ph$. For $n$ large enough, say $n \geq N$, and $t \in [0,1]$ we have $\ps_n(t) \in \on{Diff}_c(\R^d)$ and the set $\{ \ps_n(t)\inv \,:\, n \geq N,\, t \in [0,1]\}$ satisfies the assumptions of Lemma~\ref{lem:sobolev_comp_cont}. Hence
\[
L(\ps_n) = \int_0^1 \left\| (\ph-\ph_n) \circ \ps_n(t)\inv \right\|_{H^s} \ud t
\leq C \| \ph-\ph_n \|_{H^s}\,,
\]
for some constant $C$, thus showing $\on{dist}^{\mc D}(\ph_n, \ph) \leq C \| \ph_n - \ph\|_{H^s}$, which implies $\on{dist}^{\mc D}(\ph_n, \ph) \to 0$.

Now assume $\on{dist}^{\mc D}(\ph_n, \ph) \to 0$ and for each $n$, choose a smooth path $\ps_n \in C^\infty([0,1], \on{Diff}_c(\R^d))$ from $\ph$ to $\ph_n$ with $L(\ps_n) \leq \on{dist}^{\mc D}(\ph_n, \ph) + \frac 1n$. Define $u_n = \p_t \ps_n \circ \ps_n\inv$. Then $\| u_n \|_{L^1(I,H^s)} = L(\ps_n) \to 0$ and therefore $\ph_n = \ps_n(1) = \on{Fl}_1(u_n) \circ \ph \to \ph$ in the $H^s$-topology by Lemma~\ref{lem:ex_flow}.
\end{proof}

\subsection{The outer metric on $\on{Emb}(M,\R^d)$} Let $M$ be a compact manifold without boundary with $\dim M =m$.
For each $q \in \on{Emb}(M,\R^d)$ we define the seminorm $\|\cdot\|_{q,\mc O}$ on $T_q\on{Emb}(M,\R^d)$,
\[
\| u \|_{q,\mc O}^2 = \inf_{X \circ q = u} G^{\mc D}_{\on{Id}}(X,X)\,,
\]
with $u \in T_q \on{Emb}(M,\R^d)$ and the infimum taken over $X \in \mf X_c(\R^d)$. It is not difficult to see that $\|\cdot \|_{q,\mc O}$ satisfies the parallelogram law,
\[
2 \| u \|^2_{q,\mc O} + 2 \| v \|^2_{q,\mc O} = \| u + v\|^2_{q,\mc O} + \| u-v\|^2_{q,\mc O}\,,
\]
and hence defines a symmetric, positive semi-definite bilinear form
\[
G_q^{\mc O}(u,v) = \frac 14 \left( \| u + v \|^2_{q,\mc O} -  \| u - v \|^2_{q,\mc O} \right)\,.
\]
We call $G^{\mc O}$ the \emph{outer metric} induced by $G^{\mc D}$.

\begin{lemma}
\label{lem:outer_metric_pos_definite}
Let $s > d/2$. Then the  bilinear form $G^{\mc O}_q$ is positive definite.
\end{lemma}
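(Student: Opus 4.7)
The plan is to bound $G^{\mc O}_q(u,u)$ from below by an $H^{s'}$-Sobolev norm of $u$ on $M$, where $s' = s-(d-m)/2$. Since tangent vectors $u \in T_q\on{Emb}(M,\R^d)$ are smooth $\R^d$-valued maps on the compact manifold $M$, positivity of the $H^{s'}$-norm for nonzero $u$ then forces positive definiteness of $G^{\mc O}_q$. The key input is the trace theorem of Lemma~\ref{lem:trace_theorem}, which relates the $H^s$-norm on $\R^d$ to the $H^{s'}$-norm on $M$; the hypothesis $s > d/2$ guarantees $s' = s - (d-m)/2 > 0$ (since $m \leq d$), so that lemma is applicable.

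Concretely, I would proceed in two steps. First, using the equivalence $\| X \|_A \geq C\inv \| X \|_{H^s}$ that holds because $G^{\mc D}_{\on{Id}}$ induces the $H^s$-topology on $\mf X_c(\R^d)$, the problem reduces to bounding $\inf\{\| X \|_{H^s} : X \in \mf X_c(\R^d),\, X \circ q = u\}$ from below. Second, Lemma~\ref{lem:trace_theorem} applied component-wise produces a constant $C_q > 0$ with
\[
\| X \circ q \|_{H^{s'}(M)} \leq C_q \| X \|_{H^s(\R^d)}
\]
for all $X \in H^s(\R^d, \R^d)$. For any admissible extension $X$ of $u$, this rearranges to $\| X \|_{H^s} \geq C_q\inv \| u \|_{H^{s'}(M)}$, and taking the infimum yields
\[
G^{\mc O}_q(u,u) = \| u \|_{q,\mc O}^2 \geq C^{-2} C_q^{-2} \| u \|_{H^{s'}(M)}^2\,.
\]

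It then remains to observe that if $u \in T_q \on{Emb}(M,\R^d) \subseteq C^\infty(M,\R^d)$ is nonzero, then $\| u \|_{L^2(M)} > 0$ by smoothness and compactness of $M$, and therefore $\| u \|_{H^{s'}(M)} > 0$ (since $s' \geq 0$ so $H^{s'}(M) \hookrightarrow L^2(M)$). Combined with the displayed inequality, this gives $G^{\mc O}_q(u,u) > 0$, which is the claim. I do not expect any genuine obstacle: everything is a direct consequence of the trace theorem plus the norm equivalence built into the definition of $G^{\mc D}$. The only point that warrants a sentence of justification is verifying $s' > 0$, which is immediate from $s > d/2 \geq (d-m)/2$.
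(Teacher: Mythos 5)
Your proof is correct, but it takes a different route from the paper's. The paper argues pointwise: if $u\neq 0$, pick $x\in M$ with $u(x)\neq 0$ and use the Sobolev embedding $H^s(\R^d)\hookrightarrow C(\R^d)$ (valid since $s>d/2$) to get $|u(x)|^2=|X(q(x))|^2\leq C\, G^{\mc D}_{\on{Id}}(X,X)$ for every admissible $X$, whence $G^{\mc O}_q(u,u)\geq C\inv|u(x)|^2>0$. You instead invoke the trace theorem (Lemma~\ref{lem:trace_theorem}) to obtain the stronger quantitative bound $G^{\mc O}_q(u,u)\geq C\,\|u\|^2_{H^{s'}(M)}$ and then conclude from positivity of the $H^{s'}$-norm. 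Both are valid; the trade-off is that the paper's argument is more elementary (it needs only continuity of point evaluation, not the full trace theory of Section~\ref{sec:trace_sobolev}), while yours proves more --- your displayed inequality is essentially one half of the subsequent unnumbered lemma identifying the topology induced by $\|\cdot\|_{q,\mc O}$ as the $H^{s'}$-topology, so if you were writing the section you could fold this lemma into that one. Your side remarks ($s'>0$, the infimum over smooth admissible $X$, component-wise application of the scalar trace theorem) are all handled correctly.
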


\begin{proof}
If $u \neq 0$, then $u(x) \neq 0$ for some $x \in M$ and because $s > d/2$ we have by the Sobolev embedding theorem 
\[
|u(x)|^2 = |X(q(x))|^2 \leq C_1 \| X\|_{H^s} \leq C_2 G_{\on{Id}}^{\mc D}(X,X)
\]
for some constants $C_1,C_2 > 0$ and all $X \in \mf X_c(\R^d)$ with $X\circ q = u$. Thus $G_q^{\mc O}(u,u) \geq C_2^{-1} |u(x)|^2 > 0$ and it follows that $G_q^{\mc O}(\cdot,\cdot)$ is positive definite.
\end{proof}

\subsection{Riemannian submersions}
The group $\on{Diff}_c(\R^d)$ acts on $\on{Emb}(M,\R^d)$ from the left via
\[
(\ph, q) \mapsto \ph \circ q\,.
\]
and the inner product $G^{\mc O}$ is related to this left action in the following way: Let $q_0 \in \on{Emb}(M,\R^d)$ be fixed and consider the induced map
\[
p : \on{Diff}_{c}(\R^d) \to \on{Emb}(S^1,\R^d)\,,\quad \ph \mapsto \ph \circ q_0\,.
\]
Then $G^{\mc D}$, $G^{\mc O}$ and $p$ are related by
\[
G^{\mc O}_q(u,u) = \inf_{T_\ph p.X_\ph = u} G_{\ph}^{\mc D}(X_\ph,X_\ph)\,,
\]
provided $\ph \in \on{Diff}_{c}(\R^d)$ is such that $p(\ph) = \ph \circ q_0 = q$ and the infimum is taken over $X_\ph \in T_\ph \on{Diff}_{c}(\R^d)$. To see this note that $T_\ph p.X_\ph = X_\ph \circ q_0$ and $Y = X_\ph \circ \ph\inv \in \mf X_{c}(\R^d)$ satisfies $Y \circ q = X_\ph \circ q_0 = u$. Thus, using the right-invariance of $G^{\mc D}$,
\[
\inf_{T_\ph p.X_\ph = u} G_{\ph}^{\mc D}(X_\ph,X_\ph) = \inf_{Y \circ q = u} G_{\ph}^{\mc D}(Y \circ \ph, Y \circ \ph) = \inf_{Y \circ q = u} G_{\on{Id}}^{\mc D}(Y,Y)\,.
\]

If we ignore for now the question whether the outer metric $G^{\mc O}$ depends smoothly on $q$, we can say that 
\[
p : (\on{Diff}_c(\R^d), G^{\mc D}) \to (\on{Emb}(M,\R^d), G^{\mc O})
\]
is a \emph{Riemannian submersion}. We would like to emphasize that this is true for all choices of a base embedding $q_0$.

\subsection{The orthogonal projection}
When viewing $p$ as a Riemannian submersion, we can associate to each $\ph$ the vertical subspace
\[
\on{ver}(\ph) = \on{ker} T_\ph p = \{ X_\ph \,:\, X_\ph \circ q_0 \equiv 0 \}
\subset T_q \on{Diff}_c(\R^d) \,.
\]
It is more natural to consider the right-trivialization $\on{ver}(\ph) \circ \ph\inv$, which only depends on $q = p(\ph) = \ph \circ q_0$,
\[
\on{ver}(\ph) \circ \ph\inv = \{ Y \,:\, Y \circ q \equiv 0\} \subset \mf X_c(\R^d)\,.
\]
Let $s > d/2$. We write $\on{ver}(q)$ for the $H^s$-closure of the subspace $\on{ver}(\ph) \circ q\inv$,
\[
\on{ver}(q) = \{ X \in H^s(\R^d,\R^d) \,:\, X \circ q \equiv 0 \} =  \on{ker} \on{Tr}_q \subset H^s(\R^d,\R^d) \,.
\]
Since $\on{ker} \on{Tr}_q$ is a closed subspace of $H^s(\R^d,\R^d)$, there exists the $\|\cdot\|_A$-othogonal projection
\[
P_q : H^s(\R^d,\R^d) \to H^s(\R^d,\R^d)\,,
\]
with kernel $\on{ker} \on{Tr}_q$. To be precise, $P_q$ is characterized by the following properties,
\begin{align*}
P^2_q &= P_q\,, &
\on{ker} P_q &= \on{ker} \on{Tr}_q\,, &
\langle AP_q X, P_qY \rangle_{H^{-s}} = \langle AP_q X, Y \rangle_{H^{-s}}\,.
\end{align*}
and the last identity can be rewritten as
\[
P_q^\ast A = P_q^\ast A P_q = A P_q\,.
\]

\subsection{Order of the outer metric}
We have seen in Lemma~\ref{lem:outer_metric_pos_definite} that the outer metric $G^{\mc O}$ is positive definite. Using the trace and extension operators from Lemma~\ref{lem:trace_theorem} we can also determine the topology induced by each $G^{\mc O}_q$.

\begin{lemma*} Let $s > d/2$, $s' = s - (d-m)/2$ and $q \in \on{Emb}(M,\R^d)$. The outer metric $\| \cdot \|_{q,\mc O}$ induces the Sobolev $H^{s'}$-topology on $T_q \on{Emb}(M,\R^d) \cong C^\infty(M,\R^d)$.
\end{lemma*}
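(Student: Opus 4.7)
The plan is to establish the two-sided estimate
\[
C\inv \| u \|_{H^{s'}} \leq \| u \|_{q,\mc O} \leq C \| u \|_{H^{s'}}\,,
\]
for some constant $C = C(q) > 0$ and all $u \in C^\infty(M,\R^d)$, by using the trace and extension operators provided by Lemma~\ref{lem:trace_theorem} componentwise. Once this is done, the equivalence of the seminorm $\| \cdot \|_{q,\mc O}$ and the $H^{s'}$-norm on the dense subspace $C^\infty(M,\R^d) \subset H^{s'}(M,\R^d)$ identifies the topology.

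For the upper bound, given $u \in C^\infty(M,\R^d)$ I would apply the extension operator $\on{Ex}_q : H^{s'}(M) \to H^s(\R^d)$ to each component to produce a vector field $X := \on{Ex}_q u \in H^s(\R^d,\R^d)$ satisfying $X \circ q = \on{Tr}_q \on{Ex}_q u = u$. Since $\on{Ex}_q$ is bounded and the $A$-norm is equivalent to the $H^s$-norm, this admissible choice of $X$ yields
\[
\| u \|_{q,\mc O}^2 \leq \| X \|_A^2 \leq C^2 \| \on{Ex}_q u \|_{H^s}^2 \leq C^2 \| \on{Ex}_q \|^2 \| u \|_{H^{s'}}^2\,.
\]

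For the lower bound, I would take \emph{any} $X \in \mf X_c(\R^d)$ with $X \circ q = u$ and apply the trace theorem: $u = \on{Tr}_q X$ gives
\[
\| u \|_{H^{s'}} = \| \on{Tr}_q X \|_{H^{s'}} \leq \| \on{Tr}_q \| \cdot \| X \|_{H^s} \leq C \| \on{Tr}_q \| \cdot \| X \|_A\,,
\]
using again the equivalence of the $A$-norm and $H^s$-norm. Taking the infimum over all admissible $X$ produces the desired inequality $\| u \|_{H^{s'}} \leq C \| \on{Tr}_q \| \cdot \| u \|_{q,\mc O}$.

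There is no real obstacle here; the substance of the argument is entirely packaged in Lemma~\ref{lem:trace_theorem} (the trace operator is bounded and admits a continuous right inverse) and in the norm equivalence $\| \cdot \|_A \simeq \| \cdot \|_{H^s}$ that defines a Sobolev metric of order $s$. The mildest point to verify is that Lemma~\ref{lem:trace_theorem}, which is stated for scalar-valued functions, extends trivially to $\R^d$-valued maps by applying it componentwise, which preserves all norm estimates up to a dimensional constant.
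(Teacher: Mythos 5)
Your proposal is correct and follows essentially the same route as the paper: both directions rest on the boundedness of $\on{Tr}_q$ and $\on{Ex}_q$ from Lemma~\ref{lem:trace_theorem} together with the equivalence of $\|\cdot\|_A$ and $\|\cdot\|_{H^s}$, with the upper bound obtained from the admissible competitor $\on{Ex}_q u$. The only difference is that for the lower bound the paper evaluates the trace estimate at the explicit minimizer $P_q\on{Ex}_q u$ (via the $\|\cdot\|_A$-orthogonal projection), whereas you apply it to an arbitrary admissible $X \in \mf X_c(\R^d)$ and pass to the infimum --- a mild simplification that avoids invoking $P_q$ and the attainment of the infimum altogether.
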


\begin{proof}
Using the orthogonal projection we can write
\[
\| u \|_{q,\mc O} = \| P_q \on{Ex}_q u \|_A\,,
\]
where $\on{Ex}_q$ is any extension map, i.e., a bounded right inverse of the trace map as in Lemma~\ref{lem:trace_theorem}. Then
\[
\| u \|_{q,\mc O} \leq \| \on{Ex}_q u \|_A 
\leq C_1 \| \on{Ex}_q u \|_{H^s(\R^d)} \leq C_2 \| u \|_{H^{s'}(M)}\,,
\]
with some constants $C_1,C_2$. On the other hand every $u \in H^{s'}(M,\R^d)$ can be written as $u = \on{Tr}_q \on{Ex}_q u$ and because $\on{Ex}_q u - P_q \on{Ex}_q u \in \on{ker} \on{Tr}_q$ we have
\[
u = \on{Tr}_q P_q \on{Ex}_q u\,.
\]
Thus
\begin{align*}
\| u \|_{H^{s'}(M)} &= \| \on{Tr}_q P_q \on{Ex}_q u \|_{H^{s'}(M)} \\
&\leq C_1 \| P_q \on{Ex}_q u \|_{H^s(\R^d)} \leq 
C_2 \| P_q \on{Ex}_q u \|_{A} = C_2 \| u \|_{q,\mc O}\,,
\end{align*}
again with some constants $C_1,C_2$. This shows that for all $q \in \on{Emb}(M,\R^d)$, the outer metric $\| \cdot \|_{q,\mc O}$ induces the Sobolev $H^{s'}$-topology on $T_q \on{Emb}(M,\R^d)$.
\end{proof}

\subsection{Notation and assumptions} To study the continuous dependance of $G^{\mc O}_q$ on the basepoint $q$, we introduce the following notation and assumptions which will remain valid until the end of the section. 

Let $s > d/2$, $s' = s-(d-m)/2$ and $q \in \on{Emb}(M,\R^d)$. Denote the Riesz isomorphism of $G^{\mc O}_q(\cdot, \cdot)$ on $H^{s'}(M,\R^d)$ by
\[
A_q : H^{s'}(M,\R^d) \to H^{-s'}(M,\R^d)\,,
\]
and its inverse by $B_q = A_q\inv$,
\[
B_q : H^{-s'}(M,\R^d) \to H^{s'}(M,\R^d)\,.
\]
Furthermore,
\[
P_q : H^s(\R^d,\R^d) \to H^s(\R^d,\R^d)\,,
\]
will denote the $\| \cdot \|_{A}$-orthogonal projection as defined above.

\subsection{Continuity of the outer metric $G^{\mc O}$ on $\on{Emb}(M,\R^d)$.}
The goal of the following lemmas is to show that the Riemannian metric $G^{\mc O}$, defined by the formula
\[
G^{\mc O}_q(u,u) = \inf_{T_\ph p.X_\ph = u} G_{\ph}^{\mc D}(X_\ph,X_\ph)\,,
\]
does depend continuously on the basepoint $q$. The proof does rely on the fact that the inner product $G^{\mc D}_{\on{Id}}(\cdot,\cdot)$ is a Sobolev $H^s$-inner product and that we have trace and extension theorems for Sobolev spaces available to us. The following example shows that continuous dependence of the induced metric on the basepoint is not automatic.

\begin{example}
Let $I = [-1,1]$ be a closed interval in $\R$ and $M = \{ \ast \}$ a $0$-dimensional manifold, such that $\on{Emb}(M,\R) \cong \R$. Consider the following inner product on $\mf X_c(\R)$,
\[
G^{\mc D}_{\on{Id}}(X,Y) = \int_\R X(y)Y(y) + \one_I(y) X'(y) Y'(y) \ud y\,,
\]
and the corresponding right-invariant Riemannian metric $G^{\mc D}$ on $\on{Diff}_c(\R)$. This metric behaves like an $L^2$-metric on $I^c$ and like an $H^1$-metric on $I$. What is the induced metric $G^{\mc O}$ on $\R$? When $x \in I^c$, then
\[
G^{\mc O}_x(1,1) = 0\,,\quad |x| > 1\,,
\]
because we can find vector fields $X$ satisfying $X(x) = 1$, whose support is contained in $I^c$ and with arbitrary small $L^2$-norm. For $x \in I$, however, we have the Sobolev embedding $H^1(I) \hookrightarrow C(I)$, and hence
\[
|X(x)|^2 \leq C \|X\|^2_{H^1(I)} = C \int_I X(y)^2 + X'(y)^2 \ud y \leq C G^{\mc D}_{\on{Id}}(X,X) \,,
\]
for some $C > 0$. By taking the infimum over all $X$ such that $X(x) = 1$, we obtain
\[
G^{\mc O}_x(1,1) \geq C\inv\,,\quad |x|\leq 1\,.
\]
Hence the induced outer metric $G^{\mc O}$ is not continuous at $|x|=1$.
\end{example}

The first step in the proof is to relate the Riesz isomorphism $A_q$, its inverse $B_q$ and the orthogonal projection $P_q$ with the trace and extension operators for Sobolev spaces.

\begin{proposition}
\label{prop:ABP_formulas}
The following relations hold for $q \in \on{Emb}(M,\R^d)$,
\begin{align*}
A_q &= \on{Ex}_q^\ast P_q^\ast A P_q \on{Ex}_q\,, &
B_q &= \on{Tr}_q A\inv \on{Tr}_q^\ast\,, &
P_q &= A\inv \on{Tr}_q^\ast A_q \on{Tr}_q\,.
\end{align*}
\end{proposition}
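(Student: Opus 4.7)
The plan is to deduce all three identities from the pointwise equality $\|u\|_{q,\mc O} = \|P_q \on{Ex}_q u\|_A$ proved in the preceding lemma, combined with the defining properties of $P_q$, namely $P_q^2 = P_q$, $\on{ker} P_q = \on{ker} \on{Tr}_q$, and $P_q^\ast A = AP_q = P_q^\ast A P_q$. Polarising the norm identity gives, for $u, v \in H^{s'}(M, \R^d)$,
\[
G^{\mc O}_q(u, v) = \langle A P_q \on{Ex}_q u, P_q \on{Ex}_q v\rangle_{H^{-s}} = \langle \on{Ex}_q^\ast P_q^\ast A P_q \on{Ex}_q u, v\rangle_{H^{-s'}}\,,
\]
by the very definition of the adjoints of $\on{Ex}_q$ and $P_q$. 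Since $G^{\mc O}_q(u,v) = \langle A_q u, v\rangle_{H^{-s'}}$ by definition of the Riesz isomorphism, the first identity follows.

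The key auxiliary observation is that $P_q = P_q \on{Ex}_q \on{Tr}_q$ on $H^s(\R^d,\R^d)$: for any $X$, the element $X - \on{Ex}_q \on{Tr}_q X$ has vanishing trace, hence lies in $\on{ker} \on{Tr}_q = \on{ker} P_q$. For the third identity I pair $\on{Tr}_q^\ast A_q \on{Tr}_q X$ with $Y \in H^s(\R^d,\R^d)$, unfold $A_q$ via the first identity, apply the auxiliary observation, and use $P_q^\ast A P_q = A P_q$ to obtain
\[
\langle \on{Tr}_q^\ast A_q \on{Tr}_q X, Y\rangle = G^{\mc O}_q(\on{Tr}_q X, \on{Tr}_q Y) = \langle A P_q \on{Ex}_q \on{Tr}_q X, P_q \on{Ex}_q \on{Tr}_q Y\rangle = \langle A P_q X, Y\rangle\,.
\]
Hence $\on{Tr}_q^\ast A_q \on{Tr}_q = A P_q$, and the third identity $P_q = A\inv \on{Tr}_q^\ast A_q \on{Tr}_q$ follows.

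For the second identity, I verify that $\on{Tr}_q A\inv \on{Tr}_q^\ast$ is a left inverse of $A_q$. Substituting the first identity and taking adjoints of $P_q = P_q \on{Ex}_q \on{Tr}_q$ to replace $\on{Tr}_q^\ast \on{Ex}_q^\ast P_q^\ast$ by $P_q^\ast$, the composition collapses under $P_q^\ast A = A P_q$, $P_q^2 = P_q$, and $\on{Tr}_q P_q = \on{Tr}_q$ (valid since $\on{ker} P_q = \on{ker} \on{Tr}_q$) to
\[
(\on{Tr}_q A\inv \on{Tr}_q^\ast) A_q = \on{Tr}_q A\inv P_q^\ast A P_q \on{Ex}_q = \on{Tr}_q P_q \on{Ex}_q = \on{Tr}_q \on{Ex}_q = \on{Id}_{H^{s'}(M,\R^d)}\,.
\]
Since $A_q$ is an isomorphism, this forces $B_q = A_q\inv = \on{Tr}_q A\inv \on{Tr}_q^\ast$. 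The only thing requiring any care is the bookkeeping of adjoints and the domains on which each operator acts; beyond that, the entire proposition rests on the single geometric input $P_q = P_q \on{Ex}_q \on{Tr}_q$ together with the characterising properties of an orthogonal projection.
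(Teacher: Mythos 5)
Your proof is correct and follows essentially the same route as the paper: identify $A_q$ from the attained infimum $X = P_q\on{Ex}_q u$, then reduce everything to the auxiliary identity $P_q = P_q\on{Ex}_q\on{Tr}_q$ (and its consequence $\on{Tr}_qP_q=\on{Tr}_q$) together with $P_q^\ast A = AP_q = P_q^\ast A P_q$. The only cosmetic differences are the order of the last two identities and that you verify a one-sided inverse for $B_q$ and invoke the invertibility of the Riesz isomorphism $A_q$, where the paper checks both $A_qB_q$ and $B_qA_q$ explicitly.
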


Note that the formulas do not depend on the precise choice of the extension map $\on{Ex}_q$. The only requirement is that $\on{Tr}_q \on{Ex}_q = \on{Id}_{H^{s'}(M)}$.

\begin{proof}
Let $u \in H^{s'}(M,\R^d)$. The operator $A_q$ is defined via
\[
\langle A_q u, u \rangle_{H^{-s'}(M)} = \inf_{X \circ c = u} \langle AX, X \rangle_{H^{-s}(\R^d)}\,,
\]
and the infimum is attained for $X = P_q \on{Ex}_q u$. Thus
\begin{align*}
\langle A_q u, u \rangle_{H^{-s'}(M)}
&= \langle A P_q \on{Ex}_q u, P_q \on{Ex}_q u \rangle_{H^{-s}(\R^d)} \\
&= \langle \on{Ex}_q^\ast P_q^\ast A P_q \on{Ex}_q u, u \rangle_{H^{-s'}(M)}\,.
\end{align*}
Next we verify the formula for $B_q$. Let $B_q$ be defined as above. First note that because $X - \on{Ex}_q \on{Tr}_q X \in \on{ker} \on{Tr}_q$, it follows that
\[
P_q X = P_q \on{Ex}_q \on{Tr}_q X\,,\quad
\forall X \in H^s(\R^d,\R^d)\,.
\]
Similarly, because $X - P_q X \in \on{ker} \on{Tr}_q$, we obtain
\[
\on{Tr_q} X = \on{Tr}_q P_q X\,,\quad
\forall X \in H^s(\R^d,\R^d)\,.
\]
Therefore, using the identity $P_q^\ast A P_q = P_q^\ast A$,
\begin{align*}
A_q B_q 
&= \on{Ex}_q^\ast P_q^\ast A P_q \on{Ex}_q \on{Tr}_q A\inv \on{Tr}_q^\ast \\
&= \on{Ex}_q^\ast P_q^\ast A P_q A\inv \on{Tr}_q^\ast \\
&= \on{Ex}_q^\ast P_q^\ast \on{Tr}_q^\ast
= (\on{Tr}_q \on{Ex}_q)^\ast = \on{Id}_{H^{-s'}(M)}\,.
\end{align*}
Similarly, using $P_q^\ast A P_q = A P_q$, we obtain
\begin{align*}
B_q A_q 
&= \on{Tr}_q A\inv \on{Tr}_q^\ast \on{Ex}_q^\ast P_q^\ast A P_q \on{Ex}_q  \\
&= \on{Tr}_q A\inv P_q^\ast A P_q \on{Ex}_q \\
&= \on{Tr}_q P_q \on{Ex}_q = \on{Tr}_q \on{Ex}_q = \on{Id}_{H^{s'}(M)}\,.
\end{align*}
Finally, the calculation
\begin{align*}
A\inv \on{Tr}_q^\ast A_q \on{Tr}_q
&= A\inv \on{Tr}_q^\ast \on{Ex}_q^\ast P_q^\ast A P_q \on{Ex}_q \on{Tr}_q \\
&= A\inv (P_q \on{Ex}_q \on{Tr}_q)^\ast A (P_q \on{Ex}_q \on{Tr}_q) \\
&= A\inv P_q^\ast A P_q = P_q\,,
\end{align*}
proves the formula for $P_q$.
\end{proof}

The formulas derived in Proposition~\ref{prop:ABP_formulas} allow us prove continuity of these maps. Note that we prove the joint continuity of the maps
\[
A :  \on{Emb}(M,\R^d) \x H^{s'} \to H^{-s'}
\]
and not the stronger statement that
\[
A : \on{Emb}(M,\R^d) \to L(H^{s'}, H^{-s'})\,,
\]
is continuous. In fact, we do not expect the latter to be true.

\begin{proposition}
\label{prop:ABP_continuous}
The maps
\begin{align*}
A &: \on{Emb}(M,\R^d) \x H^{s'}(M,\R^d) \to H^{-s'}(M,\R^d) \\
B &: \on{Emb}(M,\R^d) \x H^{-s'}(M,\R^d) \to H^{s'}(M,\R^d) \\
P &: \on{Emb}(M,\R^d) \x H^{s}(\R^d,\R^d) \to H^{s}(\R^d,\R^d)
\end{align*}
are continuous.
\end{proposition}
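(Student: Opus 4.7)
The plan is to establish the three continuity statements in the order $B$, then $A$, then $P$, which avoids circularity in the formulas from Proposition~\ref{prop:ABP_formulas}. Continuity of $B$ is the easy step and follows directly from $B_q = \on{Tr}_q A\inv \on{Tr}_q^\ast$: given $(q_n,\al_n)\to(q,\al)$ in $\on{Emb}(M,\R^d)\x H^{-s'}(M,\R^d)$, Proposition~\ref{prop:trace_theorem_cont} yields $\on{Tr}_{q_n}^\ast \al_n \to \on{Tr}_q^\ast \al$ in $H^{-s}(\R^d,\R^d)$, the fixed bounded linear operator $A\inv$ propagates this to $H^s(\R^d,\R^d)$, and continuity of $\on{Tr}$ (also in Proposition~\ref{prop:trace_theorem_cont}) returns the result to $H^{s'}(M,\R^d)$.

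The main obstacle is continuity of $A$. The strategy is to exploit the bijection $B_q A_q = \on{Id}_{H^{s'}(M,\R^d)}$ together with the continuity of $B$ already at hand: given $(q_n,u_n)\to(q,u)$ in $\on{Emb}(M,\R^d)\x H^{s'}(M,\R^d)$, set $v_n = u_n - B_{q_n}(A_q u)$, so that
\[
A_{q_n} u_n - A_q u = A_{q_n} v_n\,,
\]
and $v_n \to u - B_q A_q u = 0$ in $H^{s'}$ by continuity of $B$. To turn this into convergence of $A_{q_n} u_n$, I need to show that $\|A_{q_n}\|_{L(H^{s'},H^{-s'})}$ is locally bounded in $q$. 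For this I use the formula $A_q = \on{Ex}_q^\ast P_q^\ast A P_q \on{Ex}_q$ and bound the two nontrivial factors separately. First, since $P_q$ is the $\|\cdot\|_A$-orthogonal projection it has norm $1$ with respect to $\|\cdot\|_A$, and because $\|\cdot\|_A \sim \|\cdot\|_{H^s}$ with a $q$-independent equivalence constant, $\|P_q\|_{L(H^s,H^s)}$ (and hence $\|P_q^\ast\|_{L(H^{-s},H^{-s})}$) is bounded by a universal constant independent of $q$. Second, the joint continuity $\on{Ex}:\mc U\x H^{s'}\to H^s$ from Proposition~\ref{prop:trace_theorem_cont} implies that for each fixed $u\in H^{s'}$ the map $q\mapsto \on{Ex}_q u$ is locally bounded, so the Banach--Steinhaus theorem upgrades this to local boundedness of $\|\on{Ex}_q\|_{L(H^{s'},H^s)}$. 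Putting these together bounds $\|A_{q_n}\|$ locally, and the display above then yields $A_{q_n} u_n \to A_q u$ in $H^{-s'}$.

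Once $A$ is known to be continuous, continuity of $P$ is immediate from the formula $P_q = A\inv \on{Tr}_q^\ast A_q \on{Tr}_q$: given $(q_n,X_n)\to(q,X)$ in $\on{Emb}(M,\R^d)\x H^s(\R^d,\R^d)$, apply in turn joint continuity of $\on{Tr}$ into $H^{s'}$, joint continuity of $A$ into $H^{-s'}$ (from the previous step), joint continuity of $\on{Tr}^\ast$ into $H^{-s}$, and boundedness of the fixed linear operator $A\inv:H^{-s}\to H^s$. The delicate point of the entire argument is the uniform bound $\|P_q\|_{L(H^s)}\leq C$ coming from the $q$-independent equivalence of $\|\cdot\|_A$ and $\|\cdot\|_{H^s}$; without it, the identity $B_{q_n}A_{q_n}=\on{Id}$ would not translate into continuous invertibility in the $(q,u)$-variables.
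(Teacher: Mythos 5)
Your argument is correct and follows essentially the same route as the paper: continuity of $B$ from the formula $B_q = \on{Tr}_q A\inv \on{Tr}_q^\ast$, continuity of $A$ by inverting $B$ using a local bound on $\| A_q \|_{L(H^{s'},H^{-s'})}$ obtained from $A_q = \on{Ex}_q^\ast P_q^\ast A P_q \on{Ex}_q$ and the $q$-independent bound on $\|P_q\|$, and then continuity of $P$ from $P_q = A\inv\on{Tr}_q^\ast A_q \on{Tr}_q$. Your telescoping with $v_n$ just inlines the paper's Lemma~\ref{lem:cont_inverse_param}, and your Banach--Steinhaus justification of the local bound on $\|\on{Ex}_q\|$ is a valid (if slightly heavier) substitute for the direct observation that joint continuity at $(q,0)$ plus linearity bounds the operator norms.
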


\begin{proof}
The continuity of $B$ follows from the formula
\[
B_q = \on{Tr}_q A\inv \on{Tr}_q^\ast
\]
and Proposition~\ref{prop:trace_theorem_cont}. We will show continuity of $A$, using $A_q = B_q\inv$ and by applying Lemma~\ref{lem:cont_inverse_param}. To do so we need to show that $\| A_q \|_{L(H^{s'},H^{-s'})}$ is locally bounded. The maps $\on{Ex}$ and $\on{Ex^\ast}$ are continuous by Lemma~\ref{prop:trace_theorem_cont} and therefore their operator norms are locally bounded. The map $P_q$ is an orthogonal projection and therefore $\| P_q X \|_A \leq \| X \|$. Because the norm $\| \cdot \|_A$ is equivalent to the $H^s(\R^d)$-norm, $\| P_q \|_{L(H^s,H^s)}$ is locally bounded. Thus $\| A_q \|_{L(H^{s'},H^{-s'})}$ is locally bounded and hence $A$ is continuous. Finally, the formula
\[
P_q = A\inv \on{Tr}^\ast_q A_q \on{Tr}_q\,,
\]
shows that $P$ is continuous as well.
\end{proof}

The following lemma was used to show continuity of $A_q = B_q\inv$ using the continuity of $B_q$. 
\begin{lemma}
\label{lem:cont_inverse_param}
Let $U$ be a metrizable topological space, $E, F$ Banach spaces and
\[
A : U \x E \to F
\]
a continuous map, such that $A_x \in GL(E,F)$ for all $x \in U$. If $\| A\inv_x \|_{L(F,E)}$ is locally bounded, then
\[
A\inv : U \x F \to E
\]
is continuous.
\end{lemma}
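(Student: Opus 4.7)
The plan is to verify sequential continuity, which suffices since $U$ is metrizable (and $F$ is a Banach space, so $U \times F$ is also metrizable). Take $(x_n, f_n) \to (x,f)$ in $U \times F$ and aim to show $A_{x_n}^{-1} f_n \to A_x^{-1} f$ in $E$.

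First I would make the standard algebraic split
\[
A_{x_n}^{-1} f_n - A_x^{-1} f = A_{x_n}^{-1}(f_n - f) + \bigl( A_{x_n}^{-1} f - A_x^{-1} f \bigr).
\]
The first summand is easy: by the local boundedness hypothesis we have $\|A_{x_n}^{-1}\|_{L(F,E)} \leq K$ for some constant $K$ and all $n$ sufficiently large, so $\|A_{x_n}^{-1}(f_n-f)\|_E \leq K \|f_n - f\|_F \to 0$.

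The second summand is the main point. The key trick is to rewrite it so that $A$ gets evaluated at a single \emph{fixed} vector, which is precisely the situation in which the joint continuity hypothesis on $A$ applies. Setting $e := A_x^{-1} f \in E$, one has the identity
\[
A_{x_n}^{-1} f - A_x^{-1} f = A_{x_n}^{-1}\bigl( A_x e - A_{x_n} e \bigr) = A_{x_n}^{-1}\bigl( A(x,e) - A(x_n,e) \bigr).
\]
By the joint continuity of $A : U \times E \to F$ at $(x,e)$, the vector $A(x,e) - A(x_n,e)$ tends to $0$ in $F$; combining this with the uniform bound $\|A_{x_n}^{-1}\|_{L(F,E)} \leq K$ again yields convergence to $0$ in $E$.

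There is no substantial obstacle here — the lemma is essentially a parametrized version of the classical fact that inversion is continuous on $GL(E,F)$, and the only ingredient beyond that is the joint continuity hypothesis, which is exactly what is needed to control the difference $A(x,e) - A(x_n, e)$ for the fixed test vector $e = A_x^{-1} f$. The mild subtlety worth flagging is that joint continuity of $A$ does \emph{not} give continuity of the map $x \mapsto A_x$ into $L(E,F)$ with the operator norm topology, so one must be careful to apply $A$ only at individual fixed points of $E$ throughout the argument, which the decomposition above does.
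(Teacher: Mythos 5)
Your proof is correct and follows essentially the same route as the paper: the identity $A_{x_n}^{-1} f - A_x^{-1} f = A_{x_n}^{-1}\bigl(A_x - A_{x_n}\bigr)A_x^{-1} f$ is exactly the paper's decomposition, and both arguments conclude from the local bound on $\|A_{x_n}^{-1}\|$ together with joint continuity of $A$ at the fixed vector $A_x^{-1}f$.
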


\begin{proof}
Let $(x_n, z_n) \to (x,z)$ in $U \x F$ and write
\[
A\inv_{x_n} z_n -A\inv_x z = 
A\inv_{x_n} (z_n - z) + A\inv_{x_n} (A_x - A_{x_n}) A\inv_x z\,.
\]
The convergence $A\inv_{x_n} z_n \to A\inv_{x} z$ now follows from the boundedness of $\| A\inv_{x_n} \|_{L(F,E)}$ and the convergence $(A_x - A_{x_n}) A\inv_x z \to 0$.
\end{proof}

With the help of Proposition~\ref{prop:ABP_continuous} we are able to show the continuity of the outer metric.

\begin{corollary}
\label{cor:X_map}
The outer Riemannian metric $G^{\mc O}$ is continuous as a map
\[
G^{\mc O} : \on{Emb}(M,\R^d) \x H^{s'}(M,\R^d) \x H^{s'}(M,\R^d) \to \R\,.
\]
and the map $X : T\on{Emb}(M,\R^d) \to H^s(\R^d,\R^d)$, given by
\[
(q,u) \mapsto X(q,u) = P_q \on{Ex}_q u \,,
\]
is continuous and satisfies
\[
G^{\mc O}_q(u,u) = G_{\on{Id}}^{\mc D}(X(q,u), X(q,u))\,.
\]
\end{corollary}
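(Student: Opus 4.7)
The plan is to verify all three assertions simultaneously by identifying $X(q,u) = P_q \on{Ex}_q u$ as the explicit minimiser in the variational definition of $\|\cdot\|_{q,\mc O}$ and then combining the continuity of $P$, $\on{Ex}$, and of the bilinear form $G^{\mc D}_{\on{Id}}$.

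First I would establish the extremal identity $\|u\|_{q,\mc O}^2 = \|P_q \on{Ex}_q u\|_A^2$. Every $X \in H^s(\R^d,\R^d)$ with $X \circ q = u$ differs from $\on{Ex}_q u$ by an element of $\ker \on{Tr}_q = \ker P_q$, hence $P_q X = P_q \on{Ex}_q u$. Because $P_q$ is the $\|\cdot\|_A$-orthogonal projection with kernel $\ker \on{Tr}_q$, the range of $P_q$ is $\|\cdot\|_A$-orthogonal to $\ker P_q$, so Pythagoras gives
\[
\|X\|_A^2 = \|P_q X\|_A^2 + \|X - P_q X\|_A^2 \geq \|P_q \on{Ex}_q u\|_A^2,
\]
with equality precisely when $X = P_q \on{Ex}_q u$. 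This also shows that $X(q,u)$ does not depend on the particular choice of local extension map, so the formula defines a well-defined global map on $T\on{Emb}(M,\R^d)$.

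Continuity of $X$ then follows locally: for each $q_0 \in \on{Emb}(M,\R^d)$, Proposition~\ref{prop:trace_theorem_cont} supplies a neighbourhood $\mc U$ on which the extension $\on{Ex}: \mc U \x H^{s'}(M,\R^d) \to H^s(\R^d,\R^d)$ is continuous, and Proposition~\ref{prop:ABP_continuous} gives continuity of $P: \on{Emb}(M,\R^d) \x H^s(\R^d,\R^d) \to H^s(\R^d,\R^d)$; composing these yields continuity of $(q,u) \mapsto P_q \on{Ex}_q u$ on $\mc U \x H^{s'}(M,\R^d)$, and the choice of $q_0$ was arbitrary. Since $u \mapsto X(q,u)$ is linear, polarisation of the extremal identity yields $G^{\mc O}_q(u,v) = G^{\mc D}_{\on{Id}}(X(q,u), X(q,v))$, and composing this bounded bilinear form with the continuous map $(q,u,v) \mapsto (X(q,u), X(q,v))$ delivers continuity of $G^{\mc O}$ as a function of $(q,u,v)$. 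No single step is a serious obstacle; the proof is essentially bookkeeping on top of the continuity results already established in Propositions~\ref{prop:trace_theorem_cont} and~\ref{prop:ABP_continuous}, with the only real content being the variational identification of the minimiser as $P_q \on{Ex}_q u$.
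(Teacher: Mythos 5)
Your proof is correct, and it is organized differently from the paper's in two respects worth noting. For the identity $G^{\mc O}_q(u,u)=G^{\mc D}_{\on{Id}}(X(q,u),X(q,u))$, the paper simply plugs the operator formula $A_q=\on{Ex}_q^\ast P_q^\ast A P_q\on{Ex}_q$ from Proposition~\ref{prop:ABP_formulas} into $G^{\mc O}_q(u,u)=\langle A_qu,u\rangle_{H^{-s'}}$, whereas you re-derive the underlying fact directly: $P_qX=P_q\on{Ex}_qu$ for every admissible $X$, plus Pythagoras for the $\|\cdot\|_A$-orthogonal projection. This is essentially the same orthogonality input (the assertion ``the infimum is attained at $P_q\on{Ex}_qu$'' already appears inside the proof of Proposition~\ref{prop:ABP_formulas}), but your version has the bonus of making explicit that $X(q,u)$ is independent of the choice of local extension map, hence globally well defined on $T\on{Emb}(M,\R^d)$ --- a point the paper leaves implicit. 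Do note that for the infimum to actually \emph{equal} $\|P_q\on{Ex}_qu\|_A^2$ you should also record that $P_q\on{Ex}_qu$ lies in the constraint set, i.e.\ $\on{Tr}_q P_q\on{Ex}_qu=\on{Tr}_q\on{Ex}_qu=u$, which follows from $\on{Tr}_q=\on{Tr}_qP_q$; this is one line but it is needed. For the continuity of $G^{\mc O}$, the paper deduces it from the continuity of $(q,u)\mapsto A_qu$ (Proposition~\ref{prop:ABP_continuous}, which in turn required the inversion Lemma~\ref{lem:cont_inverse_param}), while you deduce it from the continuity of $X=P\circ\on{Ex}$ composed with the bounded bilinear form $G^{\mc D}_{\on{Id}}$ on $H^s\x H^s$. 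Your route is marginally more economical for this corollary, since it uses only the continuity of $P$ and $\on{Ex}$ and never needs the continuity of $A$; the paper's route is shorter on the page because the work has already been done upstream. Both are valid.
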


\begin{proof}
The continuity of $G^{\mc O}$ follows directly from
\[
G^{\mc O}_q(u,v) = \langle A_q u, v \rangle_{H^{s'}}\,.
\]
It is clear that $X = P \circ \on{Ex}$ is continuous. To show the identity connecting $G^{\mc O}$ and $G^{\mc D}$ we calculate
\begin{align*}
G^{\mc D}_{\on{Id}}(X(q,u),X(q,u)) 
&= \langle AP_q\on{Ex}_q u, P_q\on{Ex}_q u \rangle_{H^{-s}(\R^d)} \\
&= \langle \on{Ex}_q^\ast P_q^\ast AP_q\on{Ex}_q u, u\rangle_{H^{-s'(M)}} \\
&= \langle A_q u, u \rangle_{H^{-s'}(M)} = G^{\mc O}_q(u,u)\,.\qedhere
\end{align*}
\end{proof}

\subsection{The geodesic distance and orbits of the $\on{Diff}_c(\R^d)$-action.}

If $M, N$ are finite-dimesnional manifolds and $p : (M,\ga_M) \to (N,\ga_N)$ is a Riemannian submersion, then the geodesic distance functions are related by
\begin{equation}
\label{eq:submersion_infimum_orbits}
\on{dist}_N(y_1,y_2) = \inf_{p(x_2)=y_2} \on{dist}_M(x_1,x_2)\,,
\end{equation}
provided $p(x_1) = y_1$. The proof proceeds by considering paths connecting $y_1$ and $y_2$ in $N$ and lifting them horizontally to paths $M$ connecting $x_1$ and some point $x_2$ in the preimage $p\inv(y_2)$. In our situation, we are not able to lift paths horizontally.

\begin{example}
For $s \in \mb N$, consider a right-invariant Riemannian metric $G^{\mc D}$ on $\on{Diff}_c(\R^d)$ of the form
\[
G^{\mc D}_{\on{Id}}(X,Y) = \int_{\R^d} \langle LX, Y \rangle \ud x\,,
\]
where $L$ is a positive, symmetric, elliptic differential operator of order $2s$, e.g., $L=(\on{Id} - \De)^{s}$. For a given $q_0 \in \on{Emb}(M,\R^d)$ we consider the vertical space of the submersion $p(\ps) = \ps \circ q_0$ at $q \in \on{Emb}(M,\R^d)$. If $p(\ph) = q$, the right-trivialization of $\on{ver}(\ph) = \ker T_\ph p$ is
\[
\on{ver}(\ph) \circ \ph\inv = \{ Y \,:\, Y \circ q \equiv 0 \} \subseteq \mf X_c(\R^d)\,.
\]
What is the $G^{\mc D}$-horizontal complement of this subspace? Assume $X \in \mf X_c(\R^d)$ is such that
\[
G^{\mc D}_{\on{Id}}(X,Y) = \int_{\R^d} \langle LX, Y \rangle \ud x = 0\,,
\]
for all $Y \in \on{ver}(\ph) \circ \ph\inv$. Because we only require $Y$ to vanish along a submanifold this implies $LX = 0$ and because $L$ is a positive elliptic differential operator, this means $X = 0$. Hence the $G^{\mc D}$-orthogonal complement in $\mf X_c(\R^d)$ is trivial. We need to consider the space of $H^s(\R^d,\R^d)$ of $H^s$-vector fields to obtain a nontrivial orthogonal complement. However, the lack of an orthognal complement in the smooth category means that for this class of metrics no path in $\on{Emb}(M,\R^d)$, apart from the constant one, can be lifted horizontally to a path in $\on{Diff}_c(\R^d)$. 
\end{example}

As the above example shows, we should not attempt to generalize the finite-dimensional proof to prove the identity~\eqref{eq:submersion_infimum_orbits}. Instead we have to proceed by hand, utilizing the group structure of the diffeomorphism group and the existence of smoothing operators.

\begin{proposition}
\label{prop:infimum_orbits}
Let $q_1, q_2 \in \on{Emb}(M,\R^d)$ be in the same connected component. Then
\[
\on{dist}^{\mc O}(q_1, q_2) = \inf_{q_2 = \ph \circ q_1} \on{dist}^{\mc D}(\on{Id}_{\R^d}, \ph)\,,
\]
where the infimum is taken over $\ph$ in $\on{Diff}_c(\R^d)$.
\end{proposition}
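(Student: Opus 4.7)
The strategy is to prove the two inequalities separately; denote the left side by $d^{\mc O}$ and the right by $d_\ast$. For the easy direction $d^{\mc O} \leq d_\ast$, given any $\ph \in \on{Diff}_c(\R^d)$ with $\ph \circ q_1 = q_2$ and any smooth path $\ps : [0,1] \to \on{Diff}_c(\R^d)$ from $\on{Id}_{\R^d}$ to $\ph$, project it to $q(t) = \ps(t) \circ q_1$, a smooth path in $\on{Emb}(M,\R^d)$ from $q_1$ to $q_2$. The vector field $X(t) = \p_t \ps(t) \circ \ps(t)\inv \in \mf X_c(\R^d)$ satisfies $X(t) \circ q(t) = \p_t q(t)$, so by the definition of $G^{\mc O}$ and right-invariance of $G^{\mc D}$,
\[
\| \p_t q(t) \|_{q(t), \mc O}^2 \leq G^{\mc D}_{\on{Id}}(X(t), X(t)) = G^{\mc D}_{\ps(t)}(\p_t \ps(t), \p_t \ps(t));
\]
integrating gives $L(q) \leq L(\ps)$, and passing to infima over $\ps$ and $\ph$ yields $d^{\mc O} \leq d_\ast$.

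For the hard direction $d_\ast \leq d^{\mc O}$, fix $\ep > 0$ and choose a smooth path $q$ from $q_1$ to $q_2$ with $L(q) < d^{\mc O} + \ep$. Setting $u = \p_t q$, form the optimal horizontal lift $X(t) = P_{q(t)} \on{Ex}_{q(t)} u(t) \in H^s(\R^d,\R^d)$, which by Corollary~\ref{cor:X_map} satisfies $X(t) \circ q(t) = u(t)$ and $\int_0^1 \| X(t) \|_A \ud t = L(q)$. Were $X$ pointwise in $\mf X_c(\R^d)$, its $\on{Diff}_c$-valued flow $\ps$ would, by uniqueness of the transport equation $\p_t(\ps \circ q_1) = X \circ (\ps \circ q_1)$, satisfy $\ps(t) \circ q_1 = q(t)$, so $\ph = \ps(1)$ would already be the required diffeomorphism. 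Since $X$ is only $H^s$-valued, apply a standard mollification-plus-cutoff $S_\ep$ to obtain $X_\ep(t) = S_\ep X(t)$ valued in $\mf X_c(\R^d)$ with $\| X_\ep - X \|_{L^1(I, H^s)} < \ep$ and $\int_0^1 \| X_\ep(t) \|_A \ud t \leq L(q) + C\ep$, and let $\ps_\ep$ be its flow, a $\on{Diff}_c$-valued path of length at most $L(q) + C\ep$. By the continuity of $\on{Fl}$ (Lemma~\ref{lem:ex_flow}) and of composition with $q_1$, we have $\ps_\ep(1) \circ q_1 \to \ps(1) \circ q_1 = q_2$ in $H^{s'}(M,\R^d)$, where $\ps$ is the $\mc D^s$-flow of $X$.

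The final step corrects the endpoint mismatch. Set $E_\ep := \on{Id}_{\R^d} + \on{Ex}_{q_2}\bigl(\ps_\ep(1) \circ q_1 - q_2\bigr)$, where $\on{Ex}_{q_2} : H^{s'}(M,\R^d) \to H^s(\R^d,\R^d)$ is the extension operator of Lemma~\ref{lem:trace_theorem}, which also restricts to $C^\infty(M,\R^d) \to C^\infty_c(\R^d,\R^d)$. By the right-inverse identity $\on{Tr}_{q_2} \on{Ex}_{q_2} = \on{Id}$, we have $E_\ep \circ q_2 = \ps_\ep(1) \circ q_1$; moreover $E_\ep - \on{Id}_{\R^d}$ is smooth, compactly supported, and small in $H^s(\R^d, \R^d)$, so for $\ep$ small enough $E_\ep \in \on{Diff}_c(\R^d)$. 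Then $\ph_\ep := E_\ep\inv \circ \ps_\ep(1)$ satisfies $\ph_\ep \circ q_1 = q_2$, and by the $H^s$-topology equivalence of $\on{dist}^{\mc D}$ on $\on{Diff}_c$ (established earlier in this section), $\on{dist}^{\mc D}(\on{Id}_{\R^d}, E_\ep\inv) \to 0$. The triangle inequality yields
\[
\on{dist}^{\mc D}(\on{Id}_{\R^d}, \ph_\ep) \leq L(\ps_\ep) + \on{dist}^{\mc D}(\on{Id}_{\R^d}, E_\ep\inv) \leq L(q) + O(\ep),
\]
and sending $\ep \to 0$ completes the argument. The main obstacle is arranging the correction $E_\ep$ to be simultaneously $H^s$-small and compactly supported: the naive pullback-and-cutoff extension used in Corollary~\ref{cor:extend_emb_to_diff} would only yield $H^{s'}$-smallness of $E_\ep - \on{Id}_{\R^d}$, whereas the Grosse--Schneider extension of Lemma~\ref{lem:trace_theorem} provides exactly the required gain in regularity.
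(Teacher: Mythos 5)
Your proof is correct and follows essentially the same route as the paper: the easy inequality by projecting paths of diffeomorphisms, and the hard one by lifting via $X(q,u)=P_q\on{Ex}_q u$ from Corollary~\ref{cor:X_map}, smoothing with the operators of Lemma~\ref{ss:smoothing_operator}, flowing, and repairing the endpoint with $\on{Id}_{\R^d}+\on{Ex}_{q_2}(\cdot)$ before composing with its inverse. Your closing remark about why the Sobolev extension operator (rather than the smooth extension of Corollary~\ref{cor:extend_emb_to_diff}) is the right tool for the correction step is a fair observation, though it is a commentary rather than a required part of the argument.
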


\begin{proof}
It is clear that the inequality
\[
\on{dist}^{\mc O}(q_1, q_2) \leq \on{dist}^{\mc D}(\on{Id}, \ph)\,,
\]
holds for all $\ph \in \on{Diff}_c(\R^d)$ with $q_2 = \ph \circ q_1$, since any path $\ps(t)$ in $\on{Diff}_c(\R^d)$ can be projected to a path in $\on{Emb}(M,\R^d)$ via $q(t) = p \circ \ps(t)$ and we have $\on{Len}^{\mc O}(q) \leq \on{Len}^{\mc D}(\ps)$. Thus it remains to construct a sequence $(\ph_k)_{k \in \mathbb N}$ in $\on{Diff}_c(\R^d)$ with
$\on{dist}^{\mc D}(\on{Id}, \ph_k) \to \on{dist}^{\mc O}(q_1, q_2)$.

Given $\ep > 0$, choose a smooth path $q = q(t)$ in $\on{Emb}(M,\R^d)$ between $q_1$, $q_2$ with $\on{Len}^{\mc O}(q) < \on{dist}^{\mc O}(q_1,q_2) + \ep$. Let $X(t) = X(q(t), q_t(t))$ be a continuous path in $H^s(\R^d,\R^d)$, obtained by applying Corollary~\ref{cor:X_map}. Denote its flow by $\ph(t) \in \mc D^s(\R^d)$. Then 
\[
\on{dist}^{\mc D}(\on{Id}, \ph(1)) \leq \on{Len}^{\mc D}(\ph) = \on{Len}^{\mc O}(c)
\leq \on{dist}^{\mc O}(q_1,q_2) + \ep\,.
\]
Applying the smoothing operator $S_k$ from Lemma~\ref{ss:smoothing_operator} to $X$ we obtain a continuous path $A_k X(t)$ in $C^\infty_c(\R^d,\R^d)$ and we denote its flow by $\ph_k(t) \in \on{Diff}_c(\R^d)$. Using Cororllary~\ref{ss:smoothing_operator} and the $\mc D^s(\R^d)$-continuity of the flow map from Lemma~\ref{lem:ex_flow}, we obtain $\ps_k := \ph_k(1) \to \ph(1)$ in $H^s$ as $k \to \infty$ and $\ps_k \in \on{Diff}_c(\R^d)$. It follows from the trace theorem, Lemma~\ref{lem:trace_theorem}, that $\ps_k \circ q_1 \to \ph(1) \circ q_1 = q_2$ in $H^{s'}(M,\R^d)$. Using the extension operator from Lemma~\ref{lem:trace_theorem} we define
\[
\et_k = \on{Id}_{\R^d} + \on{Ex}_{q_2}(\ps_k \circ q_1 - q_2)\,.
\]
Then $\et_k \circ q_2 = \ps_k \circ q_1$ and if $\ps_k \circ q_1$ is sufficiently close to $q_2$, then $\et_k \in \mc D^s(\R^d)$; furthermore, since $\ps_k \circ q_1 -q_2 \in C^\infty(M,\R^d)$, we have in fact $\et_k \in C^\infty_c(\R^d,\R^d)$ and hence $\et_k \in \on{Diff}_c(\R^d)$. Let $\ph_k := \et_k \inv \circ \ps_k \in \on{Diff}_c(\R^d)$. Then $\ph_k \circ q_1 = \et_k\inv \circ \ps_k \circ q_1 = q_2$. We have $\ph_k \to \ph(1)$ in $H^s$ and in particular 
$\on{dist}^{\mc D}(\on{Id}, \ph_k) < \on{dist}^{\mc D}(\on{Id}, \ph(1)) + \ep$ for $k$ large enough. We obtain therefore the inequality
\[
\on{dist}^{\mc D}(\on{Id}, \ph_k) \leq \on{dist}^{\mc O}(q_1,q_2) + 2\ep\,,
\]
and the statement of the lemma follows by taking the infimum over all $\ep$.
\end{proof}

\subsection{Smoothing operators}
\label{ss:smoothing_operator}
The next lemma, used in the proof of Proposition~\ref{prop:infimum_orbits}, is a more detailed way of saying that $C^\infty_c(\R^d)$ is dense in $H^s(\R^d)$. The proof follows~\cite[p.~226]{Treves1975}. Choose $\et \in C^\infty_c(\R^d)$ with $\et \equiv 1$ for $|x| < 1$ and define the functions $\et_k(x) = \et(k\inv x)$ and $\ch(\xi) = \one_{\{|\xi| \leq k \}}(\xi)$. With this we define the operators
\[
S_k f(x) = \et_k(x).\ch_k(D) f(x)\,,
\]
where $\ch_k(D)$ is the Fourier multiplier $\ch_k(D) f = ( \ch_k. \wh f )^\vee$.

\begin{lemma*}
The operators $S_k : C^\infty_c(\R^d) \to C^\infty_c(\R^d)$ are continuous with the following properties for each $s \geq 0$ :
\begin{enumerate}
\item
\label{Ak_smoothing_op_extend}
They extend continuously to $S_k : H^s(\R^d) \to C_c^\infty(\R^d)$.
\item
\label{Ak_smoothing_op_converge}
For each $f \in H^s$, $S_k f \to f$ in $H^s$ as $k \to \infty$.
\item
As operators $S_k : H^s \to H^s$, the family $\{ S_k \}$ is uniformly bounded.
\end{enumerate}
\end{lemma*}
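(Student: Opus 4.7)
The plan is to factor $S_k = M_{\et_k} \circ \ch_k(D)$ into the Fourier multiplier $\ch_k(D)$ followed by pointwise multiplication by the compactly supported smooth function $\et_k$, and to treat each factor with standard Fourier-analytic tools together with Lemma~\ref{lem:sob_mult_estimate}. For item (1), I would observe that for any $f \in H^s(\R^d)$ with $s \geq 0$ the product $\ch_k \cdot \fourier f$ is an $L^2$-function with compact support in $\{|\xi|\leq k\}$; hence its inverse Fourier transform $\ch_k(D) f$ is smooth (in fact the restriction of an entire function, by Paley--Wiener), and multiplication by $\et_k \in C^\infty_c(\R^d)$ yields a $C^\infty_c$ function with support in $\on{supp}(\et_k)$. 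This gives the extension $S_k : H^s(\R^d) \to C^\infty_c(\R^d)$, whose image lies in the fixed space $C^\infty_{\on{supp}(\et_k)}(\R^d)$.

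For item (3), I would apply Lemma~\ref{lem:sob_mult_estimate} with $\ph = \et_k$ to bound
\[
\| S_k f \|_{H^s} \leq 2^{|s|} \| \ch_k(D) f \|_{H^s} \int_{\R^d} \langle \xi \rangle^{|s|} |\wh{\et_k}(\xi)| \ud \xi.
\]
Plancherel and $|\ch_k| \leq 1$ give $\| \ch_k(D) f \|_{H^s} \leq \| f \|_{H^s}$. Using the scaling $\wh{\et_k}(\xi) = k^d \wh\et(k\xi)$ and substituting $\zeta = k\xi$, the Fourier integral rewrites as $\int \langle \zeta/k \rangle^{|s|} |\wh\et(\zeta)| \ud \zeta$, and since $\langle \zeta/k \rangle \leq \langle \zeta \rangle$ for $k \geq 1$ this is dominated by $\int \langle \zeta \rangle^{|s|} |\wh\et(\zeta)| \ud \zeta$, which is finite because $\wh\et \in \schwartz$ and, crucially, is independent of $k$. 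Combining the two estimates gives $\| S_k f \|_{H^s} \leq C_s \| f \|_{H^s}$ uniformly in $k$.

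For item (2), I would first treat $f \in C^\infty_c(\R^d)$. Splitting
\[
S_k f - f = \et_k \bigl(\ch_k(D) f - f\bigr) + (\et_k - 1) f,
\]
the second term vanishes once $k$ exceeds the support radius of $f$ (because $\et \equiv 1$ on the unit ball), while the first is controlled by the uniform multiplier estimate above, reducing convergence to
\[
\| \ch_k(D) f - f \|_{H^s}^2 = \int_{|\xi| > k} (1+|\xi|^2)^s |\fourier f(\xi)|^2 \ud \xi \longrightarrow 0
\]
by dominated convergence. For general $f \in H^s$, a standard three-$\ep$ argument using density of $C^\infty_c$ in $H^s$ together with the uniform bound from (3) closes the proof. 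Continuity of $S_k$ on $C^\infty_c$ itself is then handled by differentiating $S_k f$ term by term and controlling each $C^m$-seminorm on $\on{supp}(\et_k)$ by $\|f\|_{H^s}$ via the same Fourier-side calculation. The main technical point is the scaling estimate for $\wh{\et_k}$: it is only the inequality $\langle \zeta/k \rangle \leq \langle \zeta \rangle$ that prevents the weighted $L^1$-norm of $\wh{\et_k}$ (and hence the $H^s$-operator norm of $M_{\et_k}$) from blowing up with $k$, so this is the step one must execute carefully.
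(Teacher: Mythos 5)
Your proof is correct and follows essentially the same route as the paper: the factorization $S_k = \et_k \cdot \ch_k(D)$, the bound $\|\ch_k(D)f\|_{H^s}\leq\|f\|_{H^s}$, the application of Lemma~\ref{lem:sob_mult_estimate} together with the scaling $\wh{\et_k}(\xi)=k^d\wh\et(k\xi)$ and $\langle \xi/k\rangle\leq\langle\xi\rangle$ for the uniform bound, and then uniform boundedness plus density for the convergence. The only cosmetic differences are that you invoke Paley--Wiener where the paper factors through $H^\infty$, and you use $C^\infty_c$ rather than $\schwartz$ as the dense subset, which slightly simplifies the term $(\et_k-1)f$.
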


\begin{proof}
Let $k \in \mb N$ be fixed. The operator $\ch_k(D) : H^s \to H^{s+m}$ is continuous for all $m \geq 0$ and hence so is $\ch_k(D) : H^s \to H^\infty$. The multiplication operator $\et_k : H^\infty \to C^\infty_c(B_k)$ with $B_k = \on{supp} \et_k$ is continuous and thus $\et_k : H^\infty \to C^\infty_c(\R^d)$ is continuous as well. Therefore, both $S_k : C_c^\infty \to C^\infty_c$ and the extension $S_k : H^s \to C^\infty_c$ are continuous operators.

We can estimate the operator norms of $\ch_k(D)$ and $\et_k$ via
\begin{align*}
\| \ch_k(D) f \|_{H^s} &= \bigg(\int_{|\xi| \leq k} \langle \xi \rangle^{2s} |\wh f(\xi)|^2 \ud \xi \bigg)^{1/2}
\leq \| f \|_{H^s} \\
\| \et_k f \|_{H^s} & \leq 2^s \| \langle \xi \rangle^s \wh \et_k \|_{L^1} \| f \|_{H^s}
= 2^s \| \langle k\inv \xi \rangle^s \wh \et \|_{L^1} \| f \|_{H^s}
\leq 2^s \| \langle \xi \rangle^s \wh \et \|_{L^1} \| f \|_{H^s}\,,
\end{align*}
where we use Lemma~\ref{lem:sob_mult_estimate} and that $\wh \et_k(\xi) = k^d \wh \et(k\xi)$.
This shows that as operators $H^s \to H^s$ the family $\{ S_k \}$ is uniformly bounded. Thus it is sufficient to verify the convergence $S_k f \to f$ for $f$ from a dense subset of $H^s$. It is easy to see that
$\ch_k(D) g \to g$ and $\et_k g \to g$ in $H^s$ when $g \in \schwartz$.
\end{proof}

\begin{corollary*}
Let $s \geq 0$, $f \in L^1(I,H^s(\R^d))$ and define $(S_k f)(t) = S_k(f(t))$. Then $S_k f \to f$ in $L^1(I,H^s(\R^d))$.
\end{corollary*}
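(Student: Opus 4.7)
The plan is to reduce the claim to the scalar dominated convergence theorem applied to the real-valued integrand $t \mapsto \| S_k f(t) - f(t)\|_{H^s}$. The two inputs we need are pointwise ($t$-a.e.) convergence and an integrable majorant, both of which are immediate from the preceding lemma.

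First I would record the pointwise statement: for each fixed $t \in I$ such that $f(t) \in H^s(\R^d)$ (i.e., almost every $t$), part~\eqref{Ak_smoothing_op_converge} of the smoothing lemma yields
\[
\bigl\| S_k f(t) - f(t) \bigr\|_{H^s} \xrightarrow{k\to\infty} 0.
\]
Next, using part~(3) of the lemma, there is a constant $C$ independent of $k$ with $\|S_k g\|_{H^s} \leq C\|g\|_{H^s}$ for all $g \in H^s(\R^d)$. Applying this to $g = f(t)$ gives the uniform-in-$k$ bound
\[
\bigl\| S_k f(t) - f(t) \bigr\|_{H^s} \leq (C+1) \bigl\| f(t) \bigr\|_{H^s},
\]
and the right-hand side lies in $L^1(I)$ by hypothesis on $f$.

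I would then apply the scalar Lebesgue dominated convergence theorem to the sequence of nonnegative measurable functions $t \mapsto \|S_k f(t) - f(t)\|_{H^s}$ to conclude
\[
\| S_k f - f \|_{L^1(I,H^s)} = \int_I \bigl\| S_k f(t) - f(t) \bigr\|_{H^s} \ud t \xrightarrow{k\to\infty} 0,
\]
which is the desired convergence. The only mildly technical point is measurability of $t \mapsto \|S_k f(t) - f(t)\|_{H^s}$: since $S_k : H^s \to H^s$ is a bounded linear operator, it preserves strong (and hence Bochner) measurability, so $t \mapsto S_k f(t) - f(t)$ is a measurable $H^s$-valued map and its norm is a measurable scalar function. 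I do not anticipate a genuine obstacle here; the corollary is essentially a packaging of the preceding lemma with dominated convergence.
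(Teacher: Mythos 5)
Your proposal is correct and follows exactly the paper's argument: pointwise convergence from part~(2) of the smoothing lemma, the uniform bound from part~(3) supplying the integrable majorant $(C+1)\|f(t)\|_{H^s}$, and the scalar dominated convergence theorem. The paper's own proof is precisely this, stated more tersely.
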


\begin{proof}
Because the operators $S_k$ are uniformly bounded, we can use the theorem of dominated convergence to show that
\[
\lim_{k \to \infty} \| S_k f - f \|_{L^1(I,H^s)}
= \int_I \lim_{k \to \infty} \| S_k f(t) - f(t) \|_{H^s} \ud t = 0\,.\qedhere
\]
\end{proof}

\subsection{Topology induced by $\on{dist}^{\mc O}$ on $\on{Emb}(M,\R^d)$.}
The next two lemmas identify the topology induced by the geodesic distance of an outer metric on the space of embeddings. We show that for $s > d/2+1$ the topology coindices with the $H^{s'}$-topology, where $s' = s-(d-m)/2$. In fact Propositions~\ref{prop:flat_outer_bound} and~\ref{prop:outer_flat_bound} show that the identity map
\[
\on{Id} : (\on{Emb}(M,\R^d), \on{dist}^{\mc O}) \to (\on{Emb}(M,\R^d), \|\cdot\|_{H^{s'}})\,,
\]
is locally bi-Lipschitz. There is a slight asymmetry here, because the above map is not just locally Lipschitz but Lipschitz on every metric ball while the inverse map is simply locally Lipschitz.

\begin{proposition}
\label{prop:flat_outer_bound}
Let $s > d/2+1$ and $s' = s-(d-m)/2$. Given $q_0 \in \on{Emb}(M,\R^d)$ and $R > 0$, there exists a constant $C = C(q_0, R) > 0$ such that
\[
\| q_1 - q_2 \|_{H^{s'}} \leq C \on{dist}^{\mc O}(q_1, q_2)\,,
\]
for all $q_1, q_2 \in B^{\mc O}(q_0, R)$.
\end{proposition}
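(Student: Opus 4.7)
The plan is to bound $\|q_1 - q_2\|_{H^{s'}}$ by the $G^{\mc O}$-length of a nearly-optimal path from $q_1$ to $q_2$, and then to control the integrand pointwise by $\|\dot q(t)\|_{q(t),\mc O}$ times a constant that is uniform along the path. Fix $\ep > 0$ and choose a smooth path $q : [0,1] \to \on{Emb}(M,\R^d)$ from $q_1$ to $q_2$ with $\on{Len}^{\mc O}(q) < \on{dist}^{\mc O}(q_1,q_2) + \ep$. Since $\on{dist}^{\mc O}(q_1,q_2) < 2R$ by the triangle inequality, every $q(t)$ lies in $B^{\mc O}(q_0, 3R + \ep)$.

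Applying Corollary~\ref{cor:X_map} pointwise in $t$, I take the horizontal lift $X(t) = P_{q(t)} \on{Ex}_{q(t)} \dot q(t) \in H^s(\R^d,\R^d)$, which is continuous in $t$ and satisfies both $\on{Tr}_{q(t)} X(t) = \dot q(t)$ and $\|X(t)\|_A = \|\dot q(t)\|_{q(t),\mc O}$. Using the equivalence of $\|\cdot\|_A$ and $\|\cdot\|_{H^s}$ on $H^s(\R^d,\R^d)$, this gives the pointwise bound
\[
\|\dot q(t)\|_{H^{s'}(M)} = \|\on{Tr}_{q(t)} X(t)\|_{H^{s'}(M)} \leq C_1 \|\on{Tr}_{q(t)}\|_{\mc L(H^s,H^{s'})} \cdot \|\dot q(t)\|_{q(t),\mc O}.
\]

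The main step is a uniform estimate on $\|\on{Tr}_{q(t)}\|$ for $q(t)$ ranging over the (possibly large) outer-ball $B^{\mc O}(q_0, 3R + \ep)$. For this I invoke Proposition~\ref{prop:infimum_orbits}: because $q(t)$ and $q_0$ lie in the same connected component of $\on{Emb}(M,\R^d)$, I can write $q(t) = \ps(t) \circ q_0$ for some $\ps(t) \in \on{Diff}_c(\R^d)$ with $\on{dist}^{\mc D}(\on{Id},\ps(t)) < 3R + \ep + 1$. Any such $\ps(t)$ arises as $\on{Fl}_1(u)$ for some $u \in L^1(I,H^s)$ with $\|u\|_{L^1} < 3R + \ep + 2$, so Lemmas~\ref{lem:diff_ball_bound} and~\ref{lem:comp_diff_ball} yield a uniform composition estimate $\|f \circ \ps(t)\|_{H^s} \leq C_2(R) \|f\|_{H^s}$. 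Factoring $\on{Tr}_{q(t)} f = \on{Tr}_{q_0}(f \circ \ps(t))$ and using the fixed trace bound at $q_0$, I get $\|\on{Tr}_{q(t)}\|_{\mc L(H^s,H^{s'})} \leq C_3(q_0,R)$ uniformly in $t$.

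Integrating the pointwise bound yields
\[
\|q_2 - q_1\|_{H^{s'}} \leq \int_0^1 \|\dot q(t)\|_{H^{s'}} \, dt \leq C(q_0,R) \on{Len}^{\mc O}(q) < C(q_0,R)\bigl(\on{dist}^{\mc O}(q_1,q_2) + \ep\bigr),
\]
and sending $\ep \to 0$ gives the claim. The main obstacle is that the trace operator depends on the embedding and one needs its norm to be bounded uniformly over an outer-metric ball; this is settled by Proposition~\ref{prop:infimum_orbits}, which lets one route every embedding in the ball through the fixed basepoint $q_0$ via a compactly-supported diffeomorphism of bounded $\on{dist}^{\mc D}$-size, at which point the flow-ball composition estimates take over.
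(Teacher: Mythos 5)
Your argument is correct, and it takes a genuinely different route from the paper's. The paper works entirely at the level of endpoints on the group: it picks $\ph_1,\ph_2\in\on{Diff}_c(\R^d)$ with $\ph_i\circ q_0=q_i$ and $\on{dist}^{\mc D}(\on{Id},\ph_i)<R$ (via Proposition~\ref{prop:infimum_orbits}), writes $\|q_1-q_2\|_{H^{s'}}=\|\on{Tr}_{q_0}(\ph_1-\ph_2)\|_{H^{s'}}\leq C\|\ph_1-\ph_2\|_{H^s}$ using only the trace at the \emph{fixed} basepoint $q_0$, and then imports the Lipschitz estimate $\|\ph_1-\ph_2\|_{H^s}\leq C'\on{dist}^{\mc D}(\ph_1,\ph_2)$ on metric balls from \cite[Lemma~6.6]{Bruveris2014_preprint}, finishing with right-invariance and the infimum over orbits. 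You instead integrate along a near-optimal path in $\on{Emb}(M,\R^d)$, using the horizontal lift from Corollary~\ref{cor:X_map} to convert $\|\dot q(t)\|_{q(t),\mc O}$ into an $H^s$ vector field, and the burden shifts to a uniform bound on $\|\on{Tr}_{q(t)}\|$ along the path, which you obtain by factoring $\on{Tr}_{q(t)}=\on{Tr}_{q_0}\circ R_{\ps(t)}$ with Proposition~\ref{prop:infimum_orbits} and the flow-ball composition estimates (Lemmas~\ref{lem:diff_ball_bound} and~\ref{lem:comp_diff_ball}). In effect you re-derive the content of the cited Lemma~6.6 at the level of embeddings rather than quoting it, which makes the proof self-contained within the tools of this paper at the cost of needing the pointwise lift; the paper's version is shorter but leans on the external Lipschitz estimate for the group. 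Two cosmetic points: your $\|u\|_{L^1}<3R+\ep+2$ should carry the constant from the equivalence of $\|\cdot\|_A$ and $\|\cdot\|_{H^s}$ (harmless, since Lemma~\ref{lem:comp_diff_ball} works for any radius), and you should fix $\ep\leq 1$ at the outset so that the uniform constant genuinely depends only on $(q_0,R)$ before letting $\ep\to 0$.
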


\begin{proof}
Let $\ph_1, \ph_2 \in \on{Diff}_c(\R^d)$ be such that $\ph_1 \circ q_0 = q_1$, $\ph_2 \circ q_0 = q_2$ and $\on{dist}^{\mc D}(\on{Id}_{\R^d},\ph_i) < R$. Then we can estimate
\[
\| q_1 - q_2 \|_{H^{s'}} = \| \ph_1 \circ q_0 - \ph_2 \circ q_0 \|_{H^{s'}}
= \left\| \on{Tr}_{q_0} \left(\ph_1 - \ph_2\right) \right\|_{H^{s'}}
\leq C_1 \| \ph_1 - \ph_2 \|_{H^s}\,,
\]
with some constant $C_1$ via Lemma~\ref{lem:trace_theorem}. Using \cite[Lemma~6.6]{Bruveris2014_preprint} we obtain
\[
\| \ph_1 - \ph_2 \|_{H^s} \leq C_2' \on{dist}^{\mc D}(\ph_1, \ph_2)\,,
\]
for some constant $C_2'$ and hence
\[
\| q_1 - q_2 \|_{H^{s'}} \leq C_2 \on{dist}^{\mc D}(\on{Id}_{\R^d}, \ph_2 \circ \ph_1\inv)\,.
\]
By noting that $\ph_2 \circ \ph_1\inv \circ q_1 = q_2$ and taking the infimum over all $\ph_1, \ph_2$ we arrive, using Proposition~\ref{prop:infimum_orbits}, at
\[
\| q_1 - q_2 \|_{H^{s'}} \leq C_2 \on{dist}^{\mc O}(q_1,q_2)\,.\qedhere
\]
\end{proof}

\begin{proposition}
\label{prop:outer_flat_bound}
Let $s > d/2+1$ and $s' = s-(d-m)/2$. Given $q_0 \in \on{Emb}(M,\R^d)$, there exists $R = R(q_0)> 0$ and $C = C(q_0) > 0$ such that
\[
\on{dist}^{\mc O}(q_1,q_2) \leq C \| q_1 - q_2 \|_{H^{s'}}\,,
\]
for all $\| q_i - q_0\|_{H^{s'}} < R$, $i=1,2$.
\end{proposition}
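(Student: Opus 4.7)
The plan is to construct an explicit diffeomorphism $\ph \in \on{Diff}_c(\R^d)$ with $\ph \circ q_1 = q_2$, then connect $\on{Id}$ to $\ph$ by the straight line and bound its $G^{\mc D}$-length. Once this is done the estimate on $\on{dist}^{\mc O}$ follows from Proposition~\ref{prop:infimum_orbits}.

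First I would apply Proposition~\ref{prop:trace_theorem_cont} to obtain an open neighborhood $\mc U$ of $q_0$ in $\on{Emb}(M,\R^d)$ and a jointly continuous extension map $\on{Ex} : \mc U \x H^{s'}(M,\R^d) \to H^{s}(\R^d,\R^d)$ satisfying $\on{Tr}_r \on{Ex}_r = \on{Id}_{H^{s'}(M)}$. As observed in the proof of Proposition~\ref{prop:ABP_continuous}, joint continuity of a map that is linear in the second variable yields local boundedness of the operator norm, so after shrinking $\mc U$ we obtain $K>0$ with $\| \on{Ex}_r u\|_{H^s} \leq K \|u\|_{H^{s'}}$ for all $r \in \mc U$ and all $u \in H^{s'}(M,\R^d)$. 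Moreover the construction in Proposition~\ref{prop:trace_theorem_cont} ensures that smooth inputs produce compactly supported smooth outputs.

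Next, for $\| q_i - q_0\|_{H^{s'}} < R$ with $R$ to be fixed shortly, I set $X := \on{Ex}_{q_1}(q_2 - q_1) \in C^\infty_c(\R^d,\R^d)$, so that $X \circ q_1 = q_2 - q_1$ and $\| X\|_{H^s} \leq 2KR$. Because $s > d/2 + 1$, the Sobolev embedding $H^s \hookrightarrow C^1$ controls $\| DX\|_{L^\infty}$ by $\| X\|_{H^s}$, so for $R$ small enough the linear path $\ph(t) = \on{Id}_{\R^d} + tX$ satisfies $\det D\ph(t) > \tfrac 12$ uniformly, remains a smooth compactly supported perturbation of the identity, and therefore lies in $\on{Diff}_c(\R^d)$. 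It satisfies $\ph(0) = \on{Id}_{\R^d}$ and $\ph(1) \circ q_1 = q_1 + X\circ q_1 = q_2$.

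Finally I estimate the length $L(\ph)$. The right-trivialized velocity is $u(t) = \p_t \ph(t) \circ \ph(t)\inv = X \circ \ph(t)\inv$. Since $\ph(t)\inv$ lies in a bounded set of $\mc D^s(\R^d)$ with a uniform lower bound on $\det D$, Lemma~\ref{lem:sobolev_comp_cont} gives $\| X \circ \ph(t)\inv \|_{H^s} \leq C_s \| X\|_{H^s}$ for some constant $C_s$ depending only on the neighborhood. Using the equivalence $\|\cdot\|_A \asymp \|\cdot\|_{H^s}$ we obtain
\[
L(\ph) = \int_0^1 \| u(t) \|_A \ud t \leq C \| X\|_{H^s} \leq C K \| q_1 - q_2 \|_{H^{s'}}\,,
\]
so Proposition~\ref{prop:infimum_orbits} yields $\on{dist}^{\mc O}(q_1,q_2) \leq \on{dist}^{\mc D}(\on{Id}_{\R^d}, \ph(1)) \leq L(\ph) \leq C' \| q_1 - q_2\|_{H^{s'}}$, as required.

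The only real obstacle is the choice of $R$: the linear interpolation $\on{Id}_{\R^d} + tX$ is a diffeomorphism only when $X$ is $C^1$-small, and the entire estimate requires that the diffeomorphisms arising along the way all lie in a single compact subset of $\mc D^s(\R^d)$ so that the composition bound of Lemma~\ref{lem:sobolev_comp_cont} applies with a uniform constant. Both requirements are handled by choosing $R$ small relative to $q_0$, which is why the statement is only local.
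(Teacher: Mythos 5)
Your overall strategy is the paper's: build an explicit path of diffeomorphisms from the extension operator, bound its $G^{\mc D}$-length by $\|q_2-q_1\|_{H^{s'}}$, and conclude via (the easy direction of) Proposition~\ref{prop:infimum_orbits}. But there is a genuine gap in the one place where you deviate, namely in setting $X=\on{Ex}_{q_1}(q_2-q_1)$. Your uniform bound $\|\on{Ex}_r u\|_{H^s}\leq K\|u\|_{H^{s'}}$ for all $r\in\mc U$ is obtained from the joint continuity in Proposition~\ref{prop:trace_theorem_cont}, and that continuity is with respect to the $C^\infty$-topology on $\on{Emb}(M,\R^d)$: the neighborhood $\mc U$ and the bound $K$ come from the smooth map $E:\mc U\to\mc D^s(\R^d)$ of Corollary~\ref{cor:extend_emb_to_diff}, via $\on{Ex}_r=R_{E(r)\inv}\on{Ex}_{q_0}$, and controlling $\|R_{E(r)\inv}\|_{L(H^s,H^s)}$ requires controlling $\|E(r)-\on{Id}\|_{H^s(\R^d)}$, which is \emph{not} dominated by $\|r-q_0\|_{H^{s'}(M)}$ (the naive extension $F(q_0,\cdot)$ is constant in the normal directions, so it costs $s$ derivatives of $r-q_0$, not $s'$). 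The hypothesis of the proposition only gives $\|q_1-q_0\|_{H^{s'}}<R$, and an $H^{s'}$-ball of smooth embeddings is not contained in any small $C^\infty$- (or even $H^s$-) neighborhood of $q_0$; so neither the membership $q_1\in\mc U$ nor the bound $\|X\|_{H^s}\leq 2KR$ is justified. This is not a cosmetic point: obtaining trace/extension bounds that are uniform over Sobolev neighborhoods of the embedding is exactly what the paper leaves open in its two concluding conjectures, and is the reason the paper calls this estimate ``truly local''.

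The repair is the paper's device of never moving the base point of the extension: set
\[
\ph(t)=\on{Id}_{\R^d}+\on{Ex}_{q_0}\bigl((1-t)q_1+tq_2-q_0\bigr)\,,
\]
so that $\ph(t)\circ q_0=(1-t)q_1+tq_2$, the endpoints satisfy $\ph(1)\circ\ph(0)\inv\circ q_1=q_2$, and the velocity is the constant $\p_t\ph(t)=\on{Ex}_{q_0}(q_2-q_1)$, bounded by $\|\on{Ex}_{q_0}\|\,\|q_2-q_1\|_{H^{s'}}$ using only the single fixed operator $\on{Ex}_{q_0}$. For $R$ small this path stays in $\on{Diff}_c(\R^d)$ and within the range of Lemma~\ref{lem:comp_diff_ball}, after which your length estimate goes through verbatim. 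Your remaining steps (the $C^1$-smallness ensuring $\on{Id}+tX$ is a diffeomorphism, the right-trivialization, Lemma~\ref{lem:sobolev_comp_cont}, and the equivalence $\|\cdot\|_A\asymp\|\cdot\|_{H^s}$) are all fine.
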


\begin{proof}
Given $q_0$, consider the map
\[
q \mapsto \ph = \on{Id}_{\R^d} + \on{Ex}_{q_0}(q-q_0)\,.
\]
It satisfies $\ph \circ q_0 = q_1$. If $\|q - q_0\|_{H^{s'}} < R$ with $R$ sufficiently small, then $\ph \in \mc D^{s}(\R^d)$ and all such $\ph$ satisfy the assumptions of Lemma~\ref{lem:diff_ball_bound}; furthermore, if $q$ is smooth, then $\ph \in \on{Diff}_c(\R^d)$. We choose such an $R$.

Let $q_1, q_2$ be given and define $q(t) = (1-t)q_1 + tq_2$ as well as 
\[
\ph(t) = \on{Id}_{\R^d} + \on{Ex}_{q_0}((1-t)q_1 + tq_2-q_0)\,.
\]
Then $\ph(t) \circ q_0 = q(t)$ and $\ph(t)$ satisfies the assumptions of Lemma~\ref{lem:diff_ball_bound} for all $t \in [0,1]$. Then,
\[
\on{dist}^{\mc O}(q_1,q_2) \leq \on{dist}^{\mc D}(\ph_1,\ph_2)
\leq \int_0^1 \| \p_t \ph(t) \circ \ph(t)\inv \|_{H^s} \ud t
\leq C_1 \int_0^1 \| \p_t \ph(t) \|_{H^s} \ud t\,,
\]
with the constant $C_1$ obtained from Lemma~\ref{lem:comp_diff_ball}. Because $\p_t \ph(t) = \on{Ex}_{q_0}(q_2-q_1)$, we also have the estimate
\[
\| \p_t \ph(t) \|_{H^s} = \| \on{Ex}_{q_0}(q_2-q_1) \|_{H^s} \leq
C_2 \| q_2 - q_1 \|_{H^{s'}}\,.
\]
Putting everything together we arrive at
\[
\on{dist}^{\mc O}(q_1,q_2) \leq C \| q_2 - q_1 \|_{H^{s'}}\,,
\]
for some constant $C$.
\end{proof}

It seems tempting to combine Propositions~\ref{prop:flat_outer_bound} and~\ref{prop:outer_flat_bound} to show that the metric completion of $(\on{Emb}(M,\R^d), \on{dist}^{\mc O})$ is the space
\[
\mc E^{s'}(M,\R^d) = \{ q \in H^{s'}(M,\R^d) \,:\, q\text{ is a $C^1$-embedding} \}\,,
\]
of $H^{s'}$-embeddings as the local equivalence between the geodesic distance $\on{dist}^{\mc O}$ and the norm $\|\cdot\|_{H^{s'}}$ would suggest. However, the equivalence is only local. In Proposition~\ref{prop:flat_outer_bound} it holds on arbitrary metric balls but in Proposition~\ref{prop:outer_flat_bound} the estimate is truly local and the radius $R(q_0)$ depends on the embedding. 

To prove the statement about the metric completion of $\on{Emb}(M,\R^d)$ it would be enough to strengthen Proposition~\ref{prop:outer_flat_bound} by allowing $q_0$ to be any emedding in $\mc E^{s'}(M,\R^d)$; now $q_0$ has to be $C^\infty$-smooth. However, the proof of Proposition~\ref{prop:outer_flat_bound} uses the extension map $\on{Ex}_{q_0}$, which to our knowledge only has been studied for smooth embeddings $q_0$. Thus we leave the following conjecture.

\begin{conjecture}
Let $s > d/2+1$ and $s' = s - (d-m)/2$. Then the metric completion of $(\on{Emb}(M,\R^d), \on{dist}^{\mc O})$ is the space $\mc E^{s'}(M,\R^d)$ of $H^{s'}$-embeddings. 
\end{conjecture}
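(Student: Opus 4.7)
The plan is to establish the conjecture by a two-sided inclusion. For the easy direction, any $\on{dist}^{\mc O}$-Cauchy sequence $(q_n)$ eventually lies in a single metric ball $B^{\mc O}(q_0, R)$ and so is $H^{s'}$-Cauchy by Proposition~\ref{prop:flat_outer_bound}; its $H^{s'}$-limit $q_\infty$ lies in $\mc E^{s'}(M,\R^d)$, because $s' > m/2+1$ forces $H^{s'}\hookrightarrow C^1$ and the $C^1$-embedding condition is open in $C^1(M,\R^d)$.

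For the reverse inclusion, given $q \in \mc E^{s'}(M,\R^d)$, density of $C^\infty$ in $H^{s'}$ together with openness of the embedding condition yields smooth embeddings $q_n \to q$ in $H^{s'}$. Showing that $(q_n)$ is $\on{dist}^{\mc O}$-Cauchy reduces to upgrading Proposition~\ref{prop:outer_flat_bound} to a uniform estimate
\[
\on{dist}^{\mc O}(q', q'') \leq C(q) \, \| q' - q''\|_{H^{s'}}
\]
valid for smooth $q', q''$ that are $H^{s'}$-close to $q$. Granted such a bound, $(q_n)$ is Cauchy and its $\on{dist}^{\mc O}$-equivalence class is naturally identified with $q$.

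The main obstacle is constructing an extension map $\on{Ex}_q : H^{s'}(M,\R^d) \to H^s(\R^d,\R^d)$ for $q \in \mc E^{s'}(M,\R^d)$ whose operator norm is uniformly bounded over a small $H^{s'}$-neighbourhood of $q$; with this in hand the proof of Proposition~\ref{prop:outer_flat_bound} carries over verbatim. I see two natural routes. The first is direct: since $s' > m/2+1$, the embedding $q$ admits a $C^1$-tubular neighbourhood, so the normal-bundle construction of Proposition~\ref{prop:ext_function} and the retraction $\pi \circ \ta^{-1}$ still make sense. One must then rerun the Gro\ss e--Schneider argument underlying Lemma~\ref{lem:trace_theorem} to verify the $(d-m)/2$-regularity gain $H^{s'}(M)\to H^s(\R^d)$ when the submanifold is only $H^{s'}$-regular. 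The second route is approximation: mollify $q$ to smooth $q^{(k)} \to q$ in $H^{s'}$, use the existing smooth extensions $\on{Ex}_{q^{(k)}}$, and pass to a limit, which requires the uniform bound $\sup_k \|\on{Ex}_{q^{(k)}}\|_{L(H^{s'},H^s)} < \infty$.

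Either route reduces the question to the same analytic upgrade: strengthening Proposition~\ref{prop:trace_theorem_cont} so that joint continuity of the trace, extension and their duals holds on $H^{s'}$-neighbourhoods in $\mc E^{s'}(M,\R^d)$ rather than only on $C^\infty$-neighbourhoods in $\on{Emb}(M,\R^d)$. Concretely, one needs the map $E$ of Corollary~\ref{cor:extend_emb_to_diff} to extend continuously to an $H^{s'}$-neighbourhood of $q$ with values in $\mc D^s(\R^d)$; equivalently, the tubular-neighbourhood construction in Proposition~\ref{prop:ext_function} must depend continuously on $q$ in the $H^{s'}$-topology. Once this hurdle is cleared, the two-sided local Lipschitz equivalence between $\on{dist}^{\mc O}$ and $\|\cdot\|_{H^{s'}}$ identifies the metric completion as $\mc E^{s'}(M,\R^d)$.
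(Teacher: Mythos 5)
This statement is left as an open conjecture in the paper, and your proposal does not close it: the central step you need --- an extension operator $\on{Ex}_q : H^{s'}(M,\R^d)\to H^s(\R^d,\R^d)$, with locally uniform operator norm, for a base point $q$ that is merely an $H^{s'}$-embedding rather than smooth --- is exactly the analytic ingredient the paper identifies as missing (it is restated there as a second conjecture on extending Proposition~\ref{prop:trace_theorem_cont} to $\mc E^{s'}(M,\R^d)$). Both of your proposed routes end by ``reducing the question to the same analytic upgrade,'' which is an honest description of the situation but means no proof has been given. Neither the tubular-neighbourhood construction of Proposition~\ref{prop:ext_function} nor the trace/extension theory of Lemma~\ref{lem:trace_theorem} is established in the paper for submanifolds of finite Sobolev regularity, and verifying the $(d-m)/2$ regularity gain in that setting is a genuine piece of analysis, not a routine adaptation.

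There is also a gap in what you call the easy direction. From Proposition~\ref{prop:flat_outer_bound} a $\on{dist}^{\mc O}$-Cauchy sequence is indeed $H^{s'}$-Cauchy and has a limit $q_\infty\in H^{s'}(M,\R^d)$, but openness of the $C^1$-embedding condition does not put $q_\infty$ in $\mc E^{s'}(M,\R^d)$: openness works in the wrong direction, and a sequence of embeddings can perfectly well converge in $C^1$ to a map with a self-intersection or a vanishing derivative. Showing that metric Cauchy sequences cannot degenerate in this way (i.e., that non-embeddings are at infinite or at least unattainable distance) is a substantive part of the conjectured completeness statement --- compare the effort this step requires in the inner-metric completeness results of \cite{Bruveris2014,Bruveris2015} --- and your proposal does not address it.
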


As argued above this conjecture is related to the following conjecture about extending the trace and extension operators to suitable spaces of Sobolev embeddings.

\begin{conjecture}
Let $M$ be a smooth, compact manifold without boundary with $\dim M = m$. Let $s > d/2$ and $s'=s-(d-m)/2$. Then the trace operator
\[
\on{Tr} : \mc E^{s'}(M,\R^d) \x H^s(\R^d)  \to H^{s'}(M)\,,\quad (q,f) \mapsto \on{Tr}_q f\,,
\]
is continuous and around each $q \in \mc E^{s'}(M,\R^d)$ there exists an open neighborhood $\mc U \subseteq \mc E^{s'}(M,\R^d)$ and a continuous extension map
\[
\on{Ex} : \mc U \x H^{s'}(M)  \to H^{s}(\R^d)\,,\quad (q,f) \mapsto \on{Ex}_q f\,,
\]
satisfying $\on{Tr}_q \on{Ex}_q f = f$.
\end{conjecture}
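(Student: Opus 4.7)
The plan is to reduce the conjecture to the smooth case already handled in Proposition~\ref{prop:trace_theorem_cont} by locally \emph{straightening} an $H^{s'}$-embedding to a smooth one via an $H^s$-diffeomorphism of the ambient space. Since $C^\infty(M,\R^d)$ is dense in $H^{s'}(M,\R^d)$ and $\mc E^{s'}(M,\R^d)$ is open in $H^{s'}(M,\R^d)$ (the latter because $C^1$-embeddings form a $C^1$-open, hence $H^{s'}$-open, subset), for any $q_0 \in \mc E^{s'}(M,\R^d)$ I would first fix an auxiliary smooth embedding $\tilde q_0 \in \on{Emb}(M,\R^d)$ with $\| q_0 - \tilde q_0 \|_{H^{s'}}$ as small as desired, and attach to it the smooth trace/extension pair $(\on{Tr}_{\tilde q_0}, \on{Ex}_{\tilde q_0})$ from Lemma~\ref{lem:trace_theorem}.

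For $q$ in a sufficiently small $H^{s'}$-ball $\mc U$ around $q_0$, I would then set
\[
\Phi(q) \, := \, \on{Id}_{\R^d} + \on{Ex}_{\tilde q_0}(q - \tilde q_0)\,,
\]
so that $\Phi(q) \circ \tilde q_0 = q$ by the defining property of $\on{Ex}_{\tilde q_0}$. Arguing exactly as in Corollary~\ref{cor:extend_emb_to_diff} and appealing to the determinant characterization $A_3$ of $\mc D^s(\R^d)$, shrinking $\mc U$ if necessary one verifies $\Phi(q) \in \mc D^s(\R^d)$ and that $q \mapsto \Phi(q)$ is continuous from $\mc U$ into $\mc D^s(\R^d)$. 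The trace then factors as $\on{Tr}_q f = \on{Tr}_{\tilde q_0}(f \circ \Phi(q))$, and joint continuity of $\on{Tr}$ on $\mc U \times H^s(\R^d)$ follows by composing Lemma~\ref{lem:sobolev_comp_cont} with Proposition~\ref{prop:trace_theorem_cont}. Symmetrically, I would define
\[
\on{Ex}_q f \, := \, \on{Ex}_{\tilde q_0}(f) \circ \Phi(q)^{-1}\,,
\]
which satisfies $\on{Tr}_q \on{Ex}_q f = \on{Ex}_{\tilde q_0}(f) \circ \tilde q_0 = f$; joint continuity in $(q,f)$ comes from Lemma~\ref{lem:sobolev_comp_cont} applied to $\Phi(q)^{-1}$, together with the continuity of inversion in $\mc D^s(\R^d)$.

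The hard part will be handling the borderline regularity range $d/2 < s \leq d/2+1$ that the conjecture allows: there $\on{Ex}_{\tilde q_0}(q-\tilde q_0) \in H^s(\R^d,\R^d)$ is at best $C^0$-small, so the Sobolev embedding $H^s \hookrightarrow C^1$ is no longer available and one cannot conclude $\Phi(q) \in \mc D^s(\R^d)$ via the determinant criterion, nor directly invoke the composition estimates of \cite{Inci2013}. A natural remedy would be to replace $\mc D^s(\R^d)$ by a class of $C^1$-diffeomorphisms whose perturbation from the identity lies in $H^s$ only along a neighborhood of $\tilde q_0(M)$, and to re-derive the required composition and inversion continuities in that enlarged class---essentially a boundary-case extension of \cite{Inci2013} tied to the geometry of $q$ rather than to the uniform regularity of ambient diffeomorphisms. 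This low-regularity refinement is exactly the substance that separates the conjecture from an almost-formal corollary of Proposition~\ref{prop:trace_theorem_cont}.
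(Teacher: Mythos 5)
First, note that the paper offers no proof of this statement: it is explicitly left as a conjecture, and the surrounding discussion explains why --- the trace/extension machinery of Lemma~\ref{lem:trace_theorem} and Proposition~\ref{prop:trace_theorem_cont} is only available for \emph{smooth} base embeddings $q$. Your strategy is the natural one, and for $s > d/2+1$ it is essentially sound: you approximate $q_0$ by a smooth $\tilde q_0$, straighten via $\Phi(q) = \on{Id} + \on{Ex}_{\tilde q_0}(q-\tilde q_0) \in \mc D^s(\R^d)$, and factor $\on{Tr}_q = \on{Tr}_{\tilde q_0}\circ R_{\Phi(q)}$ and $\on{Ex}_q = R_{\Phi(q)\inv}\circ \on{Ex}_{\tilde q_0}$. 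This is precisely the factorization used in the paper's proof of Proposition~\ref{prop:trace_theorem_cont}, with the smooth diffeomorphism $E(r)$ of Corollary~\ref{cor:extend_emb_to_diff} replaced by a Sobolev one; it yields a genuine and worthwhile extension of that proposition to $H^{s'}$-embeddings in the range $s > d/2+1$.

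However, this does not prove the conjecture as stated, and your own final paragraph concedes the gap: the conjecture only assumes $s > d/2$, and in the range $d/2 < s \leq d/2+1$ every tool you invoke fails simultaneously. There $H^s(\R^d) \not\hookrightarrow C^1$, so the determinant criterion $A_3$ cannot certify $\Phi(q) \in \on{Diff}^1$; $\mc D^s(\R^d)$ is not known to be a topological group and Lemma~\ref{lem:sobolev_comp_cont} (which requires $s > d/2+1$) is unavailable, so neither $R_{\Phi(q)}$ nor $R_{\Phi(q)\inv}$ is controlled; and since $s' = s-(d-m)/2$ then satisfies only $s' > m/2$ rather than $s' > m/2+1$, the embedding $H^{s'}(M) \hookrightarrow C^1(M)$ fails, so even your preliminary claims --- that $\mc E^{s'}(M,\R^d)$ is $H^{s'}$-open and that $q_0$ admits a nearby smooth embedding $\tilde q_0$ in $\mc E^{s'}$ --- are unjustified. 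The ``remedy'' you sketch (a class of $C^1$-diffeomorphisms with $H^s$ regularity only near $\tilde q_0(M)$, together with new composition and inversion estimates) is a research programme rather than an argument, and it is exactly the missing content that makes this a conjecture. As written, the proposal establishes at best a restricted version of the statement and leaves the asserted generality open.
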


\section{%
\texorpdfstring%
{Comparing inner and outer metrics on $\on{Emb}(S^1,\R^d)$}%
{Comparing inner and outer metrics on Emb(S1,Rd)}%
}
\label{sec:comparison}

The other class of Riemannian metrics we will consider are intrinsically defined Sobolev metrics or short $\emph{inner metrics}$. Here `inner' refers to the fact that we do not use deformations of the ambient space to define the metric. We will work only with $M=S^1$ in this section, because the theory of inner metrics is not sufficiently developed for higher-dimensional manifolds to prove the comparison theorem of interest.

As opposed to outer metrics considered in the previous section that are defined on the space of embeddings, inner metrics are naturally defined on the space of immersions,
\[
\on{Imm}(S^1,\R^d) = \{ c \in C^\infty(S^1,\R^d) \,:\, c'(\th) \neq 0,\, \forall\th \in S^1 \}\,.
\]
This is because inner metrics do not take into account the global geometry of the curve and hence self-intersections pose no problem to these metrics.

\begin{definition}
Let $n \in \mathbb N$. A Riemannian metric $G^{\mc I}$ on $\on{Imm}(S^1,\R^d)$ of the form
\[
G_c^{\mc I}(u,v) = \int_{S^1} a_0 \langle u, v \rangle + a_1 \langle D_s u, D_s v \rangle + \dots +
a_n \langle D_s^n u, D_s^n v \rangle \ud s\,,
\]
with constants $a_0, a_n > 0$ and $a_j \geq 0$ is called a \emph{Sobolev metric with constant coefficients} of order $n$. In the above equation $D_s u = \frac{1}{|c'|} u'$ denotes differentiation with respect to arc length and $\ud s = |c'| \ud \th$ integration with respect to arc length.
\end{definition}

When $n \geq 2$, the metric $G^{\mc I}$ extends to a smooth Riemannian metric on the Sobolev completion
\[
\mc I^n(S^1,\R^d) = \{ c \in H^n(S^1,\R^d) \,:\, c'(\th) \neq 0,\;\forall \th \in S^1\}\,.
\]
It is shown in \cite{Bruveris2014,Bruveris2015} that the Riemannian manifold $(\mc I^n(S^1,\R^d), G^{\mc I})$ is both geodesically and metrically complete. The proof relies on the following crucial estimate, which we will rely on later.

\begin{lemma}
\label{lem:curve_norm_equiv}
Let $n \geq 2$. Given any $M > 0$ and $C > 0$, there exists a constant $C_n = C_n(M,C)$, such that for all $c \in \mc I^n(S^1,\R^d)$ with
\[
\inf_{\th \in S^1} |c'(\th)| > M\;\;\text{and}\;\;
\| c \|_{H^n(d\th)} < C\,,
\]
and all $u \in H^n(S^1,\R^d)$,
\[
C_n\inv \|u \|_{H^n(d\th)} \leq \sqrt{G^{\mc I}_c(u,u)} \leq C_n \|u\|_{H^n(d\th)}\,.
\]
\end{lemma}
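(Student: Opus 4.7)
The plan is to reduce the equivalence to classical Sobolev estimates on $S^1$ by expressing arc-length and parameter derivatives of $u$ in terms of one another. Set $\phi(\th) := |c'(\th)|$. The hypothesis $\phi > M$ is the lower bound, and the one-dimensional Sobolev embedding $H^n(S^1) \hookrightarrow C^{n-1}(S^1)$ applied to $c \in H^n$ with $\|c\|_{H^n} < C$ yields a uniform upper bound $\phi(\th) < M'$ and an $H^{n-1}$-bound on $\phi$, both depending only on $(M,C)$; composing with the smooth map $t \mapsto t^{-1}$ on $[M,M']$ gives the analogous control on $\phi^{-1}$. Since $n \geq 2$, $H^{n-1}(S^1)$ is a Banach algebra, so all polynomial expressions in $\phi^{\pm 1}$ and $\phi', \ldots, \phi^{(n-1)}$ that appear below are controlled in $H^{n-1}$ uniformly in terms of $(M,C)$.

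Next, an induction on $k$ yields the algebraic identities
\[
D_s^k u = \phi^{-k} u^{(k)} + R_k(u) \,,\qquad u^{(k)} = \phi^k D_s^k u + \tilde R_k(u)\,,
\]
where $R_k(u)$ and $\tilde R_k(u)$ are linear combinations of $u^{(j)}$ (resp.\ $D_s^j u$) for $j < k$ with coefficients polynomial in $\phi^{\pm 1}$ and the derivatives $\phi', \dots, \phi^{(k-1)}$. Combining these with the Sobolev multiplication estimates of Section~\ref{sec:sobolev}, a second induction in $k$ produces
\[
C_n^{-1} \|u\|_{H^k(d\th)}^2 \leq \sum_{j=0}^{k} \|D_s^j u\|_{L^2(d\th)}^2 \leq C_n \|u\|_{H^k(d\th)}^2
\]
for all $0 \leq k \leq n$, with $C_n = C_n(M,C)$. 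The upper bound $\sqrt{G^{\mc I}_c(u,u)} \leq C_n \|u\|_{H^n(d\th)}$ then follows at once by bounding $\ud s = \phi\ud\th \leq M' \ud\th$ and summing.

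The lower bound is subtler because only $a_0$ and $a_n$ are required strictly positive, so $G^{\mc I}_c(u,u)$ directly controls only the $0$-th and $n$-th summands. To cope, I would use the standard $S^1$-equivalence $\|u\|_{H^n(d\th)}^2 \asymp \|u\|_{L^2(d\th)}^2 + \|u^{(n)}\|_{L^2(d\th)}^2$ together with the $k = n$ identity above. The key estimate then reduces to $\|\tilde R_n(u)\|_{L^2(d\th)}^2 \leq \ep \|u\|_{H^n(d\th)}^2 + C(\ep) \|u\|_{L^2(d\th)}^2$, after which the first term is absorbed into the left-hand side. The delicate coefficient in $\tilde R_n$ multiplies $D_s u$ and contains $\phi^{(n-1)}$, which lies only in $L^2$; one controls it by $\|\phi^{(n-1)}\|_{L^2} \|D_s u\|_\infty$ and then interpolates $\|D_s u\|_\infty$ between $\|u\|_{L^2(d\th)}$ and $\|u\|_{H^n(d\th)}$ via Gagliardo-Nirenberg, with $H^n$-exponent strictly less than one, so Young's inequality produces the required estimate with arbitrarily small $\ep$. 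Converting $\ud\th$ to $\ud s$ via $\phi > M$ yields the desired bound in terms of $G^{\mc I}_c(u,u)$. The principal technical hurdle, as described, is arranging this interpolation in the borderline case $n=2$, where $\phi^{(n-1)} = \phi'$ lies only in $L^2$ rather than in $C^0$.
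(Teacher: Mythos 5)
Your argument is correct, and I have no substantive objections: the two-sided reduction of $D_s^k u$ to $u^{(k)}$ with coefficients controlled in $H^{n-1}(S^1)$ via the bounds on $|c'|$, followed by Gagliardo--Nirenberg interpolation and absorption to handle the fact that only $a_0,a_n>0$ and that $\phi^{(n-1)}$ lies merely in $L^2$, is exactly the standard route. Note that the paper itself gives no proof of this lemma --- it is quoted from \cite{Bruveris2014,Bruveris2015} --- and your proof is essentially the one found there, so nothing further is needed.
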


\subsection{The main theorem}
\label{thm:inner_outer_bound}
We now come to the main theorem, comparing inner and outer metrics. We assume that we have an inner metric $G^{\mc I}$ of order $n$ on $\on{Imm}(S^1,\R^d)$ and a right-invariant Sobolev metric $G^{\mc D}$ of order $s$ on $\on{Diff}_c(\R^d)$. The latter induces an outer metric $G^{\mc O}$ of order $s'=s-(d-1)/2$ on $\on{Emb}(S^1,\R^d)$. We now restrict the inner metric from $\on{Imm}(S^1,\R^d)$ to $\on{Emb}(S^1,\R^d)$, which is an open subset of $\on{Imm}(S^1,\R^d)$. If $s'\geq n$, then we can bound the $G^{\mc I}$-geodesic distance by the $G^{\mc O}$-geodesic distance on arbitrary $G^{\mc O}$-metric balls.

\begin{theorem*}
Let $n \geq 2$ and $s \geq n + (d-1)/2$ be the orders of the metrics $G^{\mc I}$ and $G^{\mc D}$ respectively and denote by $G^{\mc O}$ the metric on $\on{Emb}(S^1,\R^d)$ induced by $G^{\mc D}$. Then, given $c_0 \in \on{Emb}(S^1,\R^d)$ and $R>0$, there exists $C = C(c_0, R)$, such that
\[
\on{dist}^{\mc I}(c_1, c_2) \leq C \on{dist}^{\mc O}(c_1, c_2)\,,
\]
holds for all $c_1, c_2 \in B^{\mc O}(c_0, R)$.
\end{theorem*}

Here we use the notation
\[
B^{\mc O}(c,R) = \{ c_1 \in \on{Emb}(S^1,\R^d) \,:\, \on{dist}^{\mc O}(c_1,c) < R\}\,,
\]
and similarly $B^{\mc D}(\ph,R)$ for $G^{\mc D}$-metric balls on $\on{Diff}_c(\R^d)$.

\begin{proof}
Take $c_1, c_2 \in B^{\mc O}(c_0,R)$. Since $\on{dist}^{\mc O}(c_0, c_1) < R$, there exists by Lemma~\ref{prop:infimum_orbits} a diffeomorphism $\ps \in \on{Diff}_{c}(\R^d)$ with $c_1 = \ps \circ c_0$ and $\on{dist}^{\mc D}(\on{Id}_{\R^d}, \ps) < R$. Furthermore, we choose using the same lemma a smooth path $\ph = \ph(t)$ in $\on{Diff}_{c}(\R^d)$ starting at $\on{Id}_{\R^d}$ and satisfying $c_2 = \ph(1) \circ c_1$ with length $\on{Len}^{\mc D}(\ph) < R$. Define the new path $\et(t) = \ph(t) \circ \ps$. Then $c_1 = \et(0) \circ c_0$ and $c_2 = \et(1) \circ c_0$ and because of the estimate
\begin{align*}
\on{dist}^{\mc D}(\on{Id}_{\R^d}, \et(t)) 
&\leq \on{dist}^{\mc D}(\on{Id}_{\R^d}, \ps)  + \on{dist}^{\mc D}(\ps, \ph(t) \circ \ps) \\
&= \on{dist}^{\mc D}(\on{Id}_{\R^d}, \ps)  + \on{dist}^{\mc D}(\on{Id}_{\R^d}, \ph(t)) \\
&< 2R\,,
\end{align*}
the path remains in $B^{\mc D}(\on{Id}_{\R^d},2R)$. Thus by Lemma~\ref{lem:diff_ball_bound} the quantity $\|\et(t) - \on{Id}\|_{H^s(\R^d)}$ is bounded from above and $\inf_{x \in \R^d} \det D\et(t,x)$ from below along the path. 

Define the path $c(t) = \et(t) \circ c_0$, which connects $c_1$ and $c_2$. Our aim is to estimate $\on{Len}^{\mc I}(c)$ in terms of $\on{Len}^{\mc D}(\ph)$. Let $s' = s-(d-1)/2$ be the Sobolev order of the induced metric $G^{\mc O}$. By assumption $n \leq s'$. From the inequality
\begin{align*}
\| c(t) \|_{H^n(d\th)} 
&= \left\| (\et(t) - \on{Id}) \circ c_0 + c_0 \right\|_{H^n(d\th)} \\
&\leq C_1 \left\| \et(t) - \on{Id} \right\|_{H^s(\R^d)} + \| c_0 \|_{H^n(d\th)}\,,
&&\text{ via Lemma~\ref{lem:trace_theorem}}
\end{align*}
we see that $\| c(t) \|_{H^n(d\th)}$ is also bounded along the path. Next we obtain from $c'(t,\th) = D\et(t,c_0(\th)).c_0'(\th)$ the bound
\[
|c'(t,\th)| \geq \frac{1}{\| \left(D\et(t)\right)\inv\|_\infty} |c_0'(\th)|\,,
\]
and thus $\inf_{\th} |c'(t,\th)|$ is bounded from below along the path. This allows us to apply Lemma~\ref{lem:curve_norm_equiv} along the path.

Now we can estimate the $G^{\mc I}$-geodesic distance between $c_1$ and $c_2$,
\begin{align*}
\on{dist}_{\mc I}(c_1, c_2)
&\leq \on{Len}_{\mc I}(c) = \int_0^1 \sqrt{G_{c,\mc I}(\p_t c, \p_t c)} \ud t \\
&\leq C_1 \int_0^1 \| \p_t c(t) \|_{H^n(d\th)} \ud t 
&&\text{via Lemma~\ref{lem:curve_norm_equiv}} \\
&= C_1 \int_0^1 \| \p_t \et(t) \circ c_0 \|_{H^n(d\th)} \ud t \\
& \leq C_2 \int_0^1 \| \p_t \et(t) \|_{H^s(\R^d)} \ud t
&&\text{via Lemma~\ref{lem:trace_theorem}} \\
&\leq C_3 \int_0^1 \| \p_t \et(t) \circ \et(t)\inv\|_{H^s(\R^d)} \ud t
&&\text{via Lemma~\ref{lem:comp_diff_ball}} \\
&= C_3 \on{Len}^{\mc D}(\et) = C_3 \on{Len}^{\mc D}(\ph)\,,
\end{align*}
and the constant $C_3$ depends, among other things, on $c_0$ and $R$. By taking the infimum over all paths $\ph(t)$, we obtain the desired inequality
\[
\on{dist}_{\mc I}(c_1, c_2) \leq C_3 \on{dist}^{\mc D}(c_1, c_2)\,.\qedhere
\]
\end{proof}

Shadowing the structure of Propositions~\ref{prop:flat_outer_bound} and~\ref{prop:outer_flat_bound} we can also prove the other inequality, which holds locally around each curve.

\begin{proposition}
\label{prop:outer_inner_bound}
Let $G^{\mc I}$ and $G^{\mc O}$ be as in Theorem~\ref{thm:inner_outer_bound}. Given $c_0 \in \on{Emb}(S^1,\R^d)$, there exists $R=R(c_0) > 0$ and $C=C(c_0) > 0$ such that
\[
\on{dist}^{\mc O}(c_1,c_2) \leq C \on{dist}^{\mc I}(c_1,c_2)\,,
\]
for all $c_1,c_2 \in B^{\mc I}(c_0,R)$.
\end{proposition}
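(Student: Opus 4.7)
The plan is to shadow the proof of Proposition~\ref{prop:outer_flat_bound} but with the linear ambient path replaced by an almost-minimising path for $G^{\mc I}$ and with the $H^{s'}$-length estimate replaced by an inner-energy estimate coming from Lemma~\ref{lem:curve_norm_equiv}. Proposition~\ref{prop:infimum_orbits} then converts the bound on the $G^{\mc D}$-length of a lifted path of diffeomorphisms into the desired bound on $\on{dist}^{\mc O}$.

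First I would fix $c_0 \in \on{Emb}(S^1,\R^d)$ and choose $R = R(c_0) > 0$ so small that the following hold uniformly on $B^{\mc I}(c_0, 2R)$: the local equivalence of $\on{dist}^{\mc I}$ and the $H^n$-norm (\cite[Lem.~4.2]{Bruveris2015} together with Lemma~\ref{lem:curve_norm_equiv}) gives uniform bounds $\|c - c_0\|_{H^n} < C$ and $\inf_\th |c'(\th)| > M$; the map $c \mapsto \on{Id}_{\R^d} + \on{Ex}_{c_0}(c - c_0)$ lands in $\on{Diff}_c(\R^d)$ and satisfies the hypotheses of Lemma~\ref{lem:sobolev_comp_cont}; and any smooth path $c(t)$ joining two points of $B^{\mc I}(c_0, R)$ with $\on{Len}^{\mc I}(c)$ almost minimal remains in $B^{\mc I}(c_0, 2R)$.

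Given $c_1, c_2 \in B^{\mc I}(c_0, R)$ and $\ep > 0$, select such a smooth path with $\on{Len}^{\mc I}(c) < \on{dist}^{\mc I}(c_1, c_2) + \ep$ and define
\[
\ph(t) = \on{Id}_{\R^d} + \on{Ex}_{c_0}\bigl(c(t) - c_0\bigr)\,,
\]
so that $\ph(t) \in \on{Diff}_c(\R^d)$ and $\ph(t) \circ c_0 = c(t)$. Then $\ps := \ph(1) \circ \ph(0)\inv$ satisfies $\ps \circ c_1 = c_2$, and by right-invariance $\on{dist}^{\mc D}(\on{Id}_{\R^d}, \ps) = \on{dist}^{\mc D}(\ph(0), \ph(1)) \leq \on{Len}^{\mc D}(\ph)$. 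Combining the equivalence $\|\cdot\|_A \sim \|\cdot\|_{H^s}$, Lemma~\ref{lem:sobolev_comp_cont} applied to the composition with $\ph(t)\inv$, continuity of $\on{Ex}_{c_0}$, and Lemma~\ref{lem:curve_norm_equiv} along the path yields
\[
\on{Len}^{\mc D}(\ph) \leq C_1 \int_0^1 \|\on{Ex}_{c_0}(\p_t c(t))\|_{H^s} \ud t \leq C_2 \int_0^1 \|\p_t c(t)\|_{H^{s'}} \ud t \leq C_3 \on{Len}^{\mc I}(c)\,,
\]
and Proposition~\ref{prop:infimum_orbits} then gives $\on{dist}^{\mc O}(c_1, c_2) \leq \on{dist}^{\mc D}(\on{Id}, \ps) \leq C_3(\on{dist}^{\mc I}(c_1, c_2) + \ep)$; letting $\ep \to 0$ finishes the argument.

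The main obstacle is the second inequality in the displayed chain, that is, passing from $\|\p_t c(t)\|_{H^{s'}}$ back to an expression controlled by the inner energy $\sqrt{G^{\mc I}_{c(t)}(\p_t c, \p_t c)}$, which by Lemma~\ref{lem:curve_norm_equiv} is only comparable to $\|\p_t c(t)\|_{H^n}$. In the sharp case $s = n + (d-1)/2$ one has $s' = n$ and this step is automatic. When $s > n + (d-1)/2$ the inner energy does not in general control the higher $H^{s'}$-norm of the velocity, so the path $c(t)$ must be chosen with extra regularity---for instance as a smoothly parametrised geodesic of a sufficiently high-order inner metric, whose velocity is a priori $H^{s'}$-regular along the flow with bounds depending only on $c_0$ and $R$---and this is where the remaining technical effort of the proof has to be concentrated.
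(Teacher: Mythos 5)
Your route is genuinely different from the paper's. The paper does not lift an almost-minimizing inner path at all: it simply composes two endpoint estimates, namely Proposition~\ref{prop:outer_flat_bound} (proved via the \emph{linear} path and its lift $\on{Id}+\on{Ex}_{c_0}((1-t)c_1+tc_2-c_0)$, giving $\on{dist}^{\mc O}(c_1,c_2)\leq C_1\|c_1-c_2\|_{H^{n}}$ on a small flat ball) with \cite[Lemma~4.2(1)]{Bruveris2015} (giving $\|c_1-c_2\|_{H^n}\leq C_2\on{dist}^{\mc I}(c_1,c_2)$ on inner metric balls), and then nests the radii, $R=\min(C_2\inv R_1,R_2)$. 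The advantage of factoring through the flat norm is precisely that the only velocity ever estimated is the constant $c_2-c_1$, so the difficulty you flag---controlling $\|\p_t c\|_{H^{s'}}$ along a $G^{\mc I}$-almost-minimizing path---never arises. Your direct lift does work in the case $s'=n$, with two pieces of bookkeeping worth making explicit: an almost-minimizing path between points of $B^{\mc I}(c_0,R)$ stays in $B^{\mc I}(c_0,3R+\ep)$, not $2R$; and membership in an inner metric ball only makes $\|c(t)-c_0\|_{H^n}$ \emph{bounded}, so you must invoke \cite[Lemma~4.2(1)]{Bruveris2015} once more to make it \emph{small} before concluding that $\on{Id}+\on{Ex}_{c_0}(c(t)-c_0)$ is a diffeomorphism.

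Concerning the case $s'>n$, where you locate ``the remaining technical effort'': that effort cannot succeed, because the statement itself fails there. Combining $\on{dist}^{\mc O}\leq C\on{dist}^{\mc I}$ with Proposition~\ref{prop:flat_outer_bound} and the local bound $\on{dist}^{\mc I}(c_1,c_2)\leq C\|c_1-c_2\|_{H^n}$ (again from the linear path) would force $\|c_1-c_2\|_{H^{s'}}\leq C\|c_1-c_2\|_{H^n}$ for all nearby smooth embeddings, which is false for $s'>n$ (perturb $c_1$ by $\ep u$ with $u$ increasingly oscillatory). The paper's own proof carries the same restriction---it quotes Proposition~\ref{prop:outer_flat_bound} with $H^n$ in place of $H^{s'}$, legitimate only when $s'\leq n$---so under the standing hypothesis $s\geq n+(d-1)/2$ the proposition is really about the borderline case $s=n+(d-1)/2$, exactly the case you already handle. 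Your proposed repair (taking a geodesic of a higher-order inner metric) would in any event destroy the property $\on{Len}^{\mc I}(c)\approx\on{dist}^{\mc I}(c_1,c_2)$ on which the whole argument rests.
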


\begin{proof}
The proof is a combination of Proposition~\ref{prop:outer_flat_bound} and~\cite[Lemma~4.2(1)]{Bruveris2015}. Given $c_0$, using Proposition~\ref{prop:outer_flat_bound} we obtain $R_1=R_1(c_0)$ such that
\[
\on{dist}^{\mc O}(c_1,c_2) \leq C_1 \| c_1 - c_2 \|_{H^{n}(S^1)}\,,
\]
for some $C_1=C_1(c_0)$ and $c_1,c_2$ such that $\| c_i - c_0 \|_{H^n} \leq R_1$. Next we apply~\cite[Lemma~4.2(1)]{Bruveris2015} to obtain the inequality
\[
\|c_1-c_2 \|_{H^n(S^1)} \leq C_2 \on{dist}^{\mc I}(c_1,c_2)\,,
\]
for some $C_2=C_2(c_0,R_2)$ and $c_1,c_2 \in B^{\mc I}(c_0,R_2)$ with $R_2$ chosen arbitrary. Hence, if we choose $R = \min(C_2\inv R_1, R_2)$, then $c_1 \in B^{\mc I}(c_0,R)$ implies $\| c_1 - c_0\|_{H^{n}(S^1)} \leq R_1$ and hence
\[
\on{dist}^{\mc O}(c_1,c_2) \leq C_1 C_2 \on{dist}^{\mc I}(c_1,c_2)\,,
\]
for all $c_1,c_2 \in B^{\mc I}(c_0,R)$.
\end{proof}

\let\i=\ii           
\printbibliography

\end{document}